\newtheorem{theorem}{Theorem}[section]
\newtheorem{corollary}[theorem]{Corollary}
\newtheorem{lemma}[theorem]{Lemma}
\newtheorem{remark}[theorem]{Remark}
\newtheorem{question}{Question}
\newtheorem{definition}[theorem]{Definition}
\newtheorem{example}[theorem]{Example}
\theoremstyle{definition}
\def\ie{{\em i.e.,} }
\def\eg{{\em e.g.} }
\newfont\bbf{msbm10 at 12pt}
\def\eps{\varepsilon}
\def\phi{\varphi}
\def\R{{\mathbb R}}
\def\N{{\mathbb N}}
\def\Z{{\mathbb Z}}
\def\cN{{\mathcal N}}
\def\E{{\mathcal E}}
\def\D{{\mathcal D}}
\def\diam{\mbox{\rm diam}\,}
\def\theta{\vartheta}
\def\UIL{\underleftarrow\lim(I,T)}
\def\ovl{\overleftarrow}
\def\chain{{\mathcal C}}
\def\mesh{\mbox{mesh}}
\def\eps{\varepsilon}
\begin{document}

\title{Planar embeddings of chainable continua}

\author{Ana Anu\v{s}i\'c, Henk Bruin, Jernej \v{C}in\v{c}}

\address[A.\ Anu\v{s}i\'c]{Departamento de Matem\'atica Aplicada, IME-USP, Rua de Mat\~ao 1010, Cidade Universit\'aria, 05508-090 S\~ao Paulo SP, Brazil}
\email{anaanusic@ime.usp.br}
\address[H.\ Bruin]{Faculty of Mathematics,
University of Vienna,
Oskar-Morgenstern-Platz 1, A-1090 Vienna, Austria}
\email{henk.bruin@univie.ac.at}
\address[J.\ \v{C}in\v{c}]{AGH University of Science and Technology, Faculty of Applied Mathematics, al. Mickiewicza 30, 30-059 Krak\'ow, Poland. -- and -- National Supercomputing Centre IT4Innovations, Division of the University of Ostrava, Institute for Research and Applications of Fuzzy Modeling, 30. dubna 22, 70103 Ostrava, Czech Republic}
\email{jernej@agh.edu.pl}
\thanks{AA was supported in part by Croatian Science Foundation under the 
project IP-2014-09-2285.
HB and J\v{C} were supported by the FWF stand-alone project P25975-N25. J\v{C} 
was partially supported by the FWF Schr\"odinger Fellowship stand-alone project J-4276 and by 
University of Ostrava grant IRP201824 ``Complex topological
structures''.
We gratefully acknowledge the support of the bilateral grant \emph{Strange 
Attractors and Inverse Limit Spaces},  \"Osterreichische
Austauschdienst (OeAD) - Ministry of Science, Education and Sport of the 
Republic of Croatia (MZOS), project number HR 03/2014.}
\date{\today}

\subjclass[2010]{37B45, 37E05, 54H20}
\keywords{unimodal map, inverse limit space, planar embeddings}

\begin{abstract}
We prove that for a chainable continuum $X$ and every $x\in X$ with only finitely many coordinates contained 
in a zigzag there exists a planar embedding 
$\phi:X\to \phi(X)\subset\R^2$ such that $\phi(x)$ is accessible, partially answering a question of 
Nadler and Quinn from 1972. Two embeddings $\phi,\psi:X \to \R^2$ are called strongly equivalent
if $\phi \circ \psi^{-1}: \psi(X) \to \phi(X)$ can be extended to a homeomorphism
of $\R^2$. We also prove that every nondegenerate indecomposable chainable continuum can be 
embedded in the plane in uncountably many ways that are not strongly equivalent.
\end{abstract}

\maketitle

\section{Introduction}

It is well-known that every chainable continuum can be embedded in the plane, 
see \cite{Bing}. 
In this paper we develop methods to study nonequivalent planar embeddings, 
similar to methods used by Lewis in \cite{Lew} and Smith in \cite{Sm} for the study of planar 
embeddings of the pseudo-arc. 
Following Bing's approach from \cite{Bing} (see Lemma~\ref{lem:patterns}), we 
construct nested intersections 
of discs in the plane which are small tubular neighborhoods of polygonal lines obtained 
from the bonding maps. 
Later we show that this approach produces all possible planar embeddings of chainable 
continua which can be 
covered with planar chains with \emph{connected} links, see Theorem~\ref{thm:allemb}. 
From that we can produce uncountably many  nonequivalent
planar embeddings of the same chainable continuum.

\begin{definition}\label{def:equivembed}
	Let $X$ be a chainable continuum.
	Two embeddings $\phi,\psi:X \to \R^2$ are called  {\em equivalent} if
	there is a homeomorphism $h$ of $\R^2$ such that $h(\phi(X)) = \psi(X)$.
They are {\em strongly equivalent}
	if $\psi \circ \phi^{-1}: \phi(X)\to \psi(X)$ can be extended to a 
homeomorphism of $\R^2$. 
\end{definition}

That is, equivalence requires some homeomorphism between $\phi(X)$ and $\psi(X)$ 
to be extended to $\R^2$ 
whereas strong equivalence requires the homeomorphism $\psi \circ \phi^{-1}$
between $\phi(X)$ and $\psi(X)$ to be extended to $\R^2$.

Clearly, strong equivalence implies equivalence, but in general not the 
other way around,
see for instance Remark~\ref{rem:n_emb}. We say a nondegenerate continuum is {\em indecomposable}, if 
it is not the union of two proper subcontinua.

\begin{question}\label{q:uncountably}
Are there uncountably many nonequivalent planar embeddings of every chainable 
indecomposable continuum?
\end{question}

This question is listed as Problem 141 in a collection of 
continuum theory problems from 1983 by Lewis \cite{LewP} and was also posed by 
Mayer in his dissertation in 1983 \cite{MayThesis} (see also \cite{May}) using the standard definition of equivalent embeddings.

We give a positive answer to the adaptation of the above 
question using strong equivalence, see Theorem~\ref{thm:Mayer}.
If the continuum is the inverse limit space of a unimodal map and not 
hereditarily decomposable, then
the result holds for both definitions of equivalent, see 
Remark~\ref{rem:otherdef}.

In terms of equivalence, this generalizes the result in \cite{embed}, 
where we prove that every unimodal inverse limit 
space with bonding map of positive topological entropy can be embedded in the 
plane in uncountably 
many nonequivalent ways. The special construction in \cite{embed} uses 
symbolic techniques 
which enable direct computation of accessible sets and prime ends (see \cite{AC}). 
Here we utilize a more direct geometric approach.

One of the main motivations for the study of planar embeddings of tree-like continua is 
the question of whether the
\emph{plane fixed point property} holds. The problem is considered to be one of the most 
important open problems in continuum theory. 
Is it true that every continuum $X \subset \R^2$ not separating the plane has the fixed point 
property, 
\ie every continuous $f: X\to X$ has a fixed point? There are examples of 
tree-like continua 
without the fixed point property, see \eg Bellamy's example in \cite{Bell}. 
It is not known whether Bellamy's example can be embedded in the plane. 
Although chainable continua are known to have the fixed point property (see 
\cite{Ha}), insight 
in their planar embeddings may be of use to the general setting of tree-like 
continua.

Another motivation for this study is the following long-standing open problem.
For this we use the following definition.

\begin{definition}
	Let $X\subset\R^2$. We say that $x\in X$ is {\em accessible} (from the 
complement of $X$) if there exists an arc $A\subset\R^2$ such that $A\cap 
X=\{x\}$.
\end{definition}

\begin{question} [Nadler and Quinn 1972, pg. 229 in \cite{Nadler}] \label{q:NaQu} Let $X$ 
be a chainable continuum and $x\in X$. Can $X$ be embedded in the plane such 
that $x$ is accessible? 
\end{question}

We will introduce the notion of a \emph{zigzag} related to the admissible 
permutations of graphs of bonding maps 
and answer Nadler and Quinn's question in the affirmative 
for the class of \emph{non-zigzag} chainable continua (see 
Corollary~\ref{cor:nonzigzag}). 
From the other direction, a promising possible counterexample 
to Question~\ref{q:NaQu} is the one suggested by Minc (see 
Figure~\ref{fig:Minc} and the description in \cite{Minc}). 
However, the currently available techniques are insufficient to determine whether the point $p\in X_M$ can be 
made accessible or not, even with the use of thin embeddings, see 
Definition~\ref{def:thin}. 

Section~\ref{sec:notation} gives basic notation,
and we review the construction of natural chains in Section~\ref{sec:chains}.
Section~\ref{sec:permuting} describes the main technique of permuting branches 
of graphs of linear interval maps. In Section~\ref{sec:stretching} we connect 
the techniques developed in Section~\ref{sec:permuting} to chains.
Section~\ref{sec:emb} applies the techniques developed
so far to accessibility of points of chainable planar continua; this is the content of 
Theorem~\ref{thm:algorithm} which is used as a technical tool afterwards. 
Section~\ref{sec:zigzags} introduces the concept of zigzags of a graph of an
interval map. 
Moreover, it gives a partial answer to Question~\ref{q:NaQu} and provides some 
interesting examples by applying the results from this section. 
Section~\ref{sec:thin} gives a proof that the permutation technique yields all 
possible thin planar embeddings of chainable continua. Furthermore, we pose some 
related open problems at the end of this section. Finally, 
Section~\ref{sec:nonequivalent} completes the
construction of uncountably many planar embeddings that are not equivalent in the strong sense, 
of every chainable continuum which contains a nondegenerate indecomposable subcontinuum 
and thus answers Question~\ref{q:uncountably} for strong 
equivalence. 
We conclude the paper with some remarks and open questions emerging from the 
study in the final section.

\section{Notation}\label{sec:notation}

Let $\N = \{ 1,2,3,\dots\}$ and $\N_0=\{0,1,2,3,\dots\}$ be the positive and nonnegative integers.
Let $f_i: I=[0,1]\to I$ be continuous surjections for $i\in\N$ and let {\em 
inverse limit space} 
$$
X_{\infty}=\underleftarrow{\lim}\{I, f_i\}=\{(x_0, x_1, x_2, \dots): f_i(x_i)=x_{i-1}, i\in\N\} 
$$
be equipped with the subspace topology endowed from the product topology of $I^{\infty}$. Let $\pi_i: X_{\infty}\to I$ be the 
{\em coordinate projections} for $i\in\N_0$.

\begin{definition}
Let $X$ be a metric space. A \emph{chain in $X$} is a set 
$\chain=\{\ell_1\ldots, \ell_n\}$ of open subsets of $X$ called \emph{links}, 
such that $\ell_i\cap\ell_j\neq\emptyset$ if and only if $|i-j|\leq 1$. 
If also $\cup_{i=1}^n \ell_i = X$, then we speak of a {\em chain cover} of $X$.
We say that a chain $\chain$ is \emph{nice} if additionally 
all links are open discs (in $X$).

The \emph{mesh of a chain $\chain$} is 
$\mesh(\chain)=\max\{\diam{\ell_i}: i=1, \ldots, n\}$.
A continuum $X$ is {\em chainable} if there exist chain covers of $X$ of arbitrarily 
small mesh. 
\end{definition}

We say that $\chain'=\{\ell'_1, \ldots, \ell'_m\}$ \emph{refines} $\chain$ and 
write $\chain'\preceq\chain$ if for every $j\in\{1, \ldots, m\}$ there exists 
$i\in\{1, \ldots, n\}$ such that $\ell'_j\subset\ell_i$. 
We say that $\chain'$ \emph{properly refines} $\chain$ and 
write $\chain'\prec\chain$ if additionally $\ell'_j\subset\ell_i$ implies that the closure 
$\overline{\ell'_j}\subset\ell_i$.

Let $\chain'\preceq\chain$ be as above. 
The \emph{pattern of $\chain'$ in $\chain$}, denoted by $Pat(\chain', \chain)$,
is the ordered $m$-tuple $(a_1, \ldots, a_m)$ such 
that $\ell'_{j}\subset\ell_{a(j)}$ for every $j\in\{1, \ldots, m\}$ where 
$a(j)\in \{1,\ldots,n \}$. If $\ell'_{j}\subset \ell_{i}\cap\ell_{i+1}$, we take 
$a(j)=i$, but that choice is just by convention.  

For chain $\chain=\{\ell_1, \ldots, \ell_n\}$, write
 $\chain^*=\cup_{i=1}^n\ell_i.$

\section{Construction of natural chains, patterns and nested 
intersections}\label{sec:chains}

First we construct \emph{natural chains} $\chain_n$ for every $n\in\N$ (the terminology originates from \cite{Br2}). 
Take some nice chain cover $C_0=\{l_1^0, \ldots, l_{k(0)}^0\}$ of $I$ and define 
$\chain_0:=\pi_0^{-1}(C_0)=\{\ell_1^0, \ldots, \ell_{k(0)}^0\}$, where 
$\ell_i^0=\pi_0^{-1}(l_i^0)$. Note that $\chain_0$ is a chain cover of 
$X_{\infty}$ (but the links are not necessarily connected sets in 
$X_{\infty}$).  

Now take a nice chain cover $C_1=\{l_1^1, \ldots, l_{k(1)}^1\}$ of $I$ such that for 
every $j\in\{1, \ldots, k(1)\}$ there exists $j'\in\{1, \ldots, k(0)\}$ such 
that $f_1(\overline{l_j^1})\subset l_{j'}^0$ and define 
$\chain_1:=\pi_1^{-1}(C_1)$. Note that $\chain_1$ is a chain cover of 
$X_{\infty}$. Also note that $\chain_1\prec\chain_0$ and $Pat(\chain_1, 
\chain_0)=\{a^1_1, \ldots, a^1_{k(1)}\}$ where 
$f_1(\pi_1(\ell_j^1))\subset\pi_0(\ell_{a^1_j}^0)$ for each $j\in\{1, \ldots, 
k(1)\}$. So the pattern $Pat(\chain_1, \chain_0)$ can easily be calculated by 
just following the graph of $f_1$.

Inductively we construct $\chain_{n+1}=\{\ell^{n+1}_1, \ldots, 
\ell^{n+1}_{k(n+1)}\}:=\pi_{n+1}^{-1}(C_{n+1})$, where $C_{n+1}=\{l^{n+1}_1, 
\ldots, l^{n+1}_{k(n+1)}\}$ is some nice chain cover of $I$ such that for every 
$j\in\{1, \ldots, k(n+1)\}$ there exists $j'\in\{1, \ldots, k(n)\}$ such that 
$f_{n+1}(\overline{l_j^{n+1}})\subset l_{j'}^n$. Note that 
$\chain_{n+1}\prec\chain_n$ and $Pat(\chain_{n+1}, \chain_n)=(a^{n+1}_1, 
\ldots, a^{n+1}_{k(n+1)})$, where 
$f_{n+1}(\pi_{n+1}(\ell_j^{n+1}))\subset\pi_n(\ell_{a^{n+1}_j}^{n})$ for each 
$j\in\{1, \ldots, k(n+1)\}$. 

Throughout the paper we use the straight letter $C$ for chain covers of the interval $I$ and the script letter $\chain$
for chain covers of the inverse limits space.
Note that the links of $C_n$ can be chosen small enough to ensure that 
$\mesh(\chain_n)\to 0$ as $n\to\infty$ 
and note that $X_{\infty}=\cap_{n\in\N_0}\chain_n^*$.


\begin{lemma}\label{lem:patterns}
	Let $X$ and $Y$ be compact metric spaces and let $\{\chain_n\}_{n \in \N_0}$ and $\{\D_n\}_{n \in \N_0}$ 
be sequences of chains in $X$ and $Y$ respectively such that $\chain_{n+1}\prec\chain_n$, 
$\D_{n+1}\prec\D_n$ and $Pat(\chain_{n+1}, \chain_n)=Pat(\D_{n+1}, \D_n)$ for 
each $n\in\N_0$. Assume also that $\mesh(\chain_n)\to 0$ and $\mesh(\D_n)\to 0$ as 
$n\to\infty$. 
	Then $X'=\cap_{n\in\N_0}\chain_n^*$ and $Y'=\cap_{n\in\N_0}\D_n^*$ are 
nonempty and homeomorphic.
\end{lemma}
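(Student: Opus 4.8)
The plan is to build a homeomorphism between $X'$ and $Y'$ as a limit of point-to-link correspondences dictated by the common pattern sequence. First I would observe that since each $\chain_{n+1}\prec\chain_n$ properly refines, every point $x\in X'$ determines a nested sequence of links $\ell^{(0)}\supset\overline{\ell^{(1)}}\supset\ell^{(1)}\supset\overline{\ell^{(2)}}\supset\cdots$ with $x=\bigcap_n \overline{\ell^{(n)}}$, and because $\mesh(\chain_n)\to 0$ this intersection is a single point; conversely any sequence of links, each properly refined by the next, has such a one-point intersection. The same holds on the $Y$ side. The key point is that the pattern equality $Pat(\chain_{n+1},\chain_n)=Pat(\D_{n+1},\D_n)$ gives, for each $n$, a bijection between the index sets of $\chain_n$ and $\D_n$ (both have $k(n)$ links) that is compatible across levels: if $\ell^{n+1}_j\subset\ell^n_i$ then the correspondingly-indexed $\d^{n+1}_j\subset\d^n_i$. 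So to a nested sequence of links in $X$ accumulating on $x$, associate the same-indexed nested sequence of links in $Y$, and let $\phi(x)$ be the unique point in their intersection.

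Next I would check this is well-defined: if two nested link-sequences both converge to $x$, then at each level $n$ their links either coincide or are adjacent (share a nonempty intersection), since $x$ lies in both closures; the corresponding $Y$-links are then also equal or adjacent by the pattern condition, so the two candidate images lie within $\mesh(\D_n)$ of each other for all $n$, hence coincide. The same argument run symmetrically produces $\psi:Y'\to X'$, and $\psi\circ\phi=\mathrm{id}$, $\phi\circ\psi=\mathrm{id}$ by construction, so $\phi$ is a bijection. Continuity follows from a standard modulus estimate: if $x,x'\in X'$ are close enough to lie in a common link of $\chain_n$ (or in adjacent links), their images lie in a common or adjacent link of $\D_n$, hence within $2\,\mesh(\D_n)$ of each other; letting $n\to\infty$ gives continuity, and by symmetry $\psi$ is continuous too, so $\phi$ is a homeomorphism. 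Nonemptiness of $X'$ and $Y'$ is immediate from compactness: $X'$ is a nested intersection of the nonempty closed sets $\overline{\chain_n^*}$ (or one can just exhibit a point via a nested link-sequence, which exists because each chain is a cover—actually one only needs the chains to be nonempty and properly nested, picking $\overline{\ell^{n+1}}\subset\ell^n$ at each stage).

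I would also make explicit the small combinatorial lemma underlying everything: because $\chain'\prec\chain$ forces each link of $\chain'$ into a \emph{unique} link of $\chain$ (properness rules out a link sitting in two adjacent links simultaneously with nonempty interior in... well, one must be slightly careful, but the convention $a(j)=i$ when $\ell'_j\subset\ell_i\cap\ell_{i+1}$ already pins down $Pat$), the pattern is a genuine function $\{1,\dots,m\}\to\{1,\dots,n\}$, and equality of patterns means the two refinement diagrams are literally the same labeled diagram. The main obstacle is the well-definedness step: a point $x$ can lie in the closures of two consecutive links at infinitely many levels, so one must verify that the adjacency relation is transported faithfully by the pattern and that "adjacent or equal at every level" forces "equal image", which is exactly where $\mesh(\D_n)\to 0$ is used. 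Everything else is routine point-set topology; no use of chainability or planarity is needed, only the abstract pattern-matching, which is precisely why this lemma (Bing's mechanism) is the engine for constructing the embeddings later in the paper.
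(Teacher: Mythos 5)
Your proposal is correct and follows essentially the same route as the paper: both define the map by transporting nested link-sequences through the shared pattern, use $\mesh(\D_n)\to 0$ for well-definedness and bijectivity, and derive continuity from the fact that the pattern sends links to correspondingly-indexed links (the paper phrases this as $h(\ell_{i(m)}^m\cap X')=L_{i(m)}^m\cap Y'$ and checks openness of preimages, while you give the equivalent uniform-modulus estimate). Your explicit treatment of the well-definedness step via adjacency of links is in fact more careful than the paper's one-line justification.
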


\begin{proof}
	To see that $X'$ and $Y'$ are nonempty note that they are nested intersections of nonempty closed sets.
	Define $\chain_k = \{\ell_1^k, \ldots, \ell_{n(k)}^k\}$ and 
$\D_k = \{L_1^k, \ldots, L_{n(k)}^k\}$ for each $k\in\N_0$. Let $x\in X'$. Then 
$x=\cap_{k\in\N_0}\ell_{i(k)}^k$ for some $\ell_{i(k)}^k\in\chain_k$ such that 
$\overline{\ell_{i(k)}^k}\subset\ell_{i(k-1)}^{k-1}$ for each $k\in\N$. Define 
$h: X'\to Y'$ as $h(x):=\cap_{k\in\N_0}L_{i(k)}^k$. Since the patterns agree 
	and diameters tend to zero, $h$ is a well-defined bijection. We 
show that it is continuous. First note that $h(\ell_{i(m)}^m\cap 
X')=L_{i(m)}^m\cap Y'$ for every $m\in\N_0$ and every $i(m)\in\{1, \ldots, 
n(m)\}$, since if $x=\cap_{k\in\N_0}\ell_{i(k)}^k\subset\ell_{i(m)}^m$, then 
there is $k'\in\N_0$ such that $\ell_{i(k)}^k\subset\ell_{i(m)}^m$ for each 
$k\geq k'$. But then $L_{i(k)}^k\subset L_{i(m)}^m$ for each $k\geq k'$, thus 
$h(x)=\cap_{k\in\N_0}L_{i(k)}^k\subset L_{i(m)}^m$. The other direction follows 
analogously. Now let $U\subset Y'$ be an open set and $x\in h^{-1}(U)$. Since 
diameters tend to zero, there is $m\in\N_0$ and $i(m)\in\{1, \ldots, n(m)\}$ 
such that $h(x)\in L_{i(m)}^m\cap Y'\subset U$ and thus $x\in\ell_{i(m)}^m\cap 
X'\subset h^{-1}(U)$. So $h^{-1}(U)\subset X'$ is open and that concludes the 
proof.
\end{proof}

In the following sections we will construct nested intersections of nice planar 
chains such that their patterns are the same as the patterns of refinements 
$\chain_n\prec\chain_{n-1}$ of natural chains of $X_{\infty}$ (as constructed at the beginning of this section) and such that the 
diameters of links tend to zero. By the previous lemma, this gives 
embeddings of $X_{\infty}$ in the plane. We note that the previous lemma holds 
in a more general setting (with an appropriately generalized definition of patterns), \ie for graph-like continua and graph chains, see 
\eg \cite{Medd}. 

\section{Permuting the graph}\label{sec:permuting}
Let $C=\{l_1, \ldots, l_n\}$ be a chain cover of $I$ and let $f: I\to I$ be a 
continuous surjection 
which is piecewise linear with finitely many critical points $0=t_0<t_1< 
\ldots< t_m<t_{m+1}=1$
(so we include the endpoints of $I=[0,1]$ in the set of critical points). 
In the rest of the paper we work with continuous surjections 
which are piecewise linear (so with finitely many critical points);
we call them {\em piecewise linear surjections}.
Without loss of generality we assume that for every $i\in\{0, \ldots, m\}$ and $l \in C$,
$f([t_i, t_{i+1}]) \not\subset l$.

Define $H_{j}=f([t_j, t_{j+1}])\times \{j\}$ for each $j\in\{0, \ldots, m\}$ and 
$V_j=\{f(t_j)\}\times[j-1, j]$ for each $j\in\{1, \ldots, m\}$.
Note that $H_{j-1}$ and $H_{j}$ are joined at their left endpoints by $V_j$ if 
 there is a local minimum of $f$ in $t_{j}$ and they are joined at their right endpoints 
if there is a local maximum of $f$ in $t_j$ (see Figure~\ref{fig:1}). The line $H_0\cup 
V_1\cup H_1\cup\ldots\cup V_m\cup H_m=:G_f$ is called the \emph{flattened graph 
of $f$ in $\R^2$}.

\begin{definition}\label{def:flat}
	A permutation $p:\{0, 1, \ldots, m\}\to\{0, 1, \ldots, m\}$ is called a
\emph{$C$-admissible permutation of $G_f$} if for every $i\in\{0, \ldots, 
m-1\}$ and $k\in\{0, \ldots, m\}$ such that $p(i)<p(k)<p(i+1)$ or 
$p(i+1)<p(k)<p(i)$ it holds that:
	\begin{enumerate}
		\item $f(t_{i+1})\not\in f([t_k, t_{k+1}])$, or 
		\item $f(t_{i+1})\in f([t_k, t_{k+1}])$ but $f(t_k)$ or 
$f(t_{k+1})$ is contained in the same link of $C$ as $f(t_{i+1})$.
	\end{enumerate}
\end{definition}

Denote a $C$-admissible permutation of $G_f$ by 
$$
p^{C}(G_f)=p(H_0)\cup  p(V_1)\cup\ldots\cup p(V_m)\cup p(H_m),
$$ 
for $p(H_j)=f([ \tilde{t}_j, \tilde{t}_{j+1}])\times\{p(j)\}$ and $p(V_j)=\{f(\tilde{t}_j)\}\times[ p(j-1), 
p(j)]$, where $\tilde{t}_j$ is chosen such that $f(t_j)$ and $f(\tilde{t}_j)$ 
are contained in the same link of $C$, and such that $p^C(G_f)$ has no self 
intersections for every $j\in\N$. A line $p^{C}(G_f)$ will be called a 
\emph{permuted graph of $f$ with respect to $C$}. Let $E(p^C(G_f))$ be the endpoint of $p(H_0)$ corresponding 
to $(f(\tilde{t}_0),p(0))$.

Note that $p(V_j)$ from Definition~\ref{def:flat} is a vertical line in the plane which 
joins the endpoints of $p(H_{j-1})$ and $p(H_{j})$ at $f(\tilde{t}_{j})$, see 
Figure~\ref{fig:1}.

\begin{definition}	
	If $p(j)=m$, we say that $H_j$ is \emph{at the top of $p^C(G_f)$}.
\end{definition}

\begin{figure}[!ht]
	\centering
	\begin{tikzpicture}[scale=3]
	
	\draw (0,0)--(0,1)--(1,1)--(1,0)--(0,0);
	\draw (0,0)--(1/4,1)--(0.5,0.3)--(3/4,1)--(1,0.3);
	\draw (1/4,-0.01)--(1/4,0.01);
	\node at (0.27,-0.07) {\scriptsize $t_{1}$};
	\draw (0.5,-0.01)--(0.5,0.01);
	\node at (0.52,-0.07) {\scriptsize $t_{2}$};
	\draw (3/4,-0.01)--(3/4,0.01);
	\node at (0.77,-0.07) {\scriptsize $t_{3}$};
	\node at (0.5,-0.25) {\small $(a)$};
	\draw (-0.01,1/4)--(0.01,1/4);
	\draw (-0.01,1/2)--(0.01,1/2);
	\draw (-0.01,3/4)--(0.01,3/4);
	\node at (-0.05,1/8) {\scriptsize $l_1$};
	\node at (-0.05,3/8) {\scriptsize $l_2$};
	\node at (-0.05,5/8) {\scriptsize $l_3$};
	\node at (-0.05,7/8) {\scriptsize $l_4$};

	\draw (1.5,0)--(2.5,0);
	\draw (1.5, -0.01)--(1.5,0.01);
	\draw (1.75, -0.01)--(1.75,0.01);
	\draw (2, -0.01)--(2,0.01);
	\draw (2.25, -0.01)--(2.25,0.01);
	\draw (2.5, -0.01)--(2.5,0.01);
	\node at (1.625,-0.07) {\scriptsize $l_1$};
	\node at (1.875,-0.07) {\scriptsize $l_2$};
	\node at (2.125,-0.07) {\scriptsize $l_3$};
	\node at (2.375,-0.07) {\scriptsize $l_4$};
	\draw 
(1.5,1/4)--(2.5,1/4)--(2.5,1/2)--(1.85,1/2)--(1.85,3/4)--(2.5,3/4)--(2.5,
1)--(1.85,1);
	\node at (1.45,1/4) {\scriptsize $0$};
	\node at (1.45,1/2) {\scriptsize $1$};
	\node at (1.45,3/4) {\scriptsize $2$};
	\node at (1.45,1) {\scriptsize $3$};
	\node at (2,0.3) {\scriptsize $H_0$};
	\node at (2.25,0.55) {\scriptsize $H_1$};
	\node at (2.25,0.8) {\scriptsize $H_2$};
	\node at (2.25,1.05) {\scriptsize $H_3$};
	\node at (2.58,0.375) {\scriptsize $V_0$};
	\node at (1.77,0.625) {\scriptsize $V_1$};
	\node at (2.58,0.875) {\scriptsize $V_2$};
	\node at (2,-0.25) {\small $(b)$};

	\draw (3.5,0)--(4.5,0);
	\draw (3.5, -0.01)--(3.5,0.01);
	\draw (3.75, -0.01)--(3.75,0.01);
	\draw (4, -0.01)--(4,0.01);
	\draw (4.25, -0.01)--(4.25,0.01);
	\draw (4.5, -0.01)--(4.5,0.01);
	\node at (3.625,-0.07) {\scriptsize $l_1$};
	\node at (3.875,-0.07) {\scriptsize $l_2$};
	\node at (4.125,-0.07) {\scriptsize $l_3$};
	\node at (4.375,-0.07) {\scriptsize $l_4$};
	\draw 
(3.5,1)--(4.5,1)--(4.5,1/4)--(3.85,1/4)--(3.85,0.5)--(4.4,0.5)--(4.4,3/4)--(3.85
,3/4);
	\node at (3.2,1/4) {\scriptsize $0=p(1)$};
	\node at (3.2,1/2) {\scriptsize $1=p(2)$};
	\node at (3.2,3/4) {\scriptsize $2=p(3)$};
	\node at (3.2,1) {\scriptsize $3=p(0)$};
	\node at (4.25,0.3) {\scriptsize $p(H_1)$};
	\node at (4.25,0.55) {\scriptsize $p(H_2)$};
	\node at (4.25,0.8) {\scriptsize $p(H_3)$};
	\node at (4,1.05) {\scriptsize $p(H_0)$};
	\node at (4.65,0.625) {\scriptsize $p(V_0)$};
	\node at (3.7,0.375) {\scriptsize $p(V_1)$};
	\draw[->] (4.4,0.625)--(4.5,0.625);
	\draw[->] (4.5,0.625)--(4.4,0.625);
	\draw[solid, fill] (3.5,1) circle (0.01);
	\node at (3.45,1) {\scriptsize $E$};
	\node at (4,-0.25) {\small $(c)$};
	\end{tikzpicture}
	\caption{Flattened graph and its permutation. Note that $H_0$ is at the 
top of $p^C(G_f)$.}
	\label{fig:1}
\end{figure}
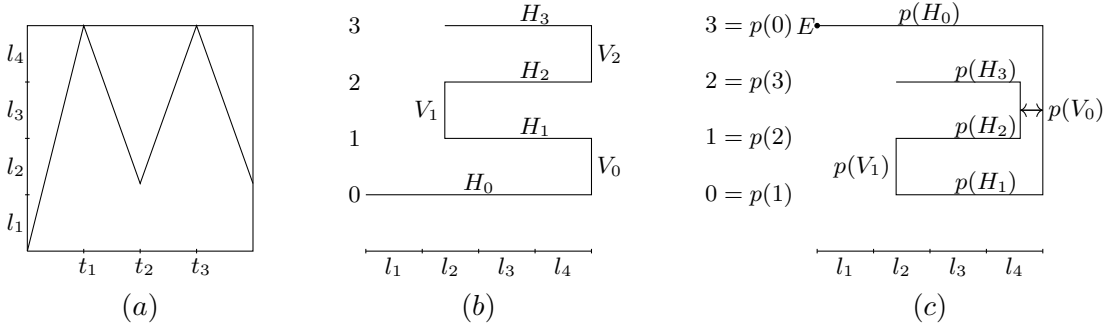

Note that a flattened graph $G_f$ is just a graph of $f$ for which its critical points have 
been extended to vertical intervals. These vertical intervals were introduced for the 
definition of a permuted graph. After permuting the flattened graph, we can quotient 
out the vertical intervals in the following way.

For every $i=1, \ldots, m$, pick a point $q_i\in p(V_i)$. There exists a homotopy 
$F:I\times\R^2\to\R^2$ such that $F(1,y)=q_i$ for every $y\in p(V_i)$ and every 
$i=1, \ldots, m$, and for every $t \in I$, $F(t, \cdot): p^C(G_f)) \to \R^2$ 
is injective, and such that $\pi_x(F(t,(x,y)))=x$ for every $(x,y)\in\R^2$ and $t\in I$. 
Here $\pi_x(x,y)$ denotes a projection on the first coordinate. 
From now on $p^C(G_f)$ will always stand for the quotient $F(1,p^C(G_f))$, but 
for clarity in the figures of Sections~\ref{sec:stretching} and \ref{sec:emb} we will 
continue to draw it with long vertical intervals.

\section{Chain refinements, their composition and 
stretching}\label{sec:stretching}

\begin{definition}
	 Let $f: I\to I$ be a piecewise linear surjection, $p$ an 
admissible $C$-permutation of $G_f$ and $\eps>0$. We call a nice planar chain 
$\chain=\{\ell_1, \ldots, \ell_n\}$ a \emph{tubular $\eps$-chain 
with nerve $p^{C}(G_f)$} if 
	\begin{itemize}
		\item $\chain^*$ is an $\eps$-neighborhood of $p^{C}(G_f)$, and
		\item there exists $n\in\N$ and arcs $A_1\cup \ldots \cup 
A_n=p^{C}(G_f)$ such that $\ell_i$ is the $\eps$-neighborhood of $A_i$ for every 
$i\in\N$.
	\end{itemize}
Denote a nerve $p^{C}(G_f)$ of $\chain$ by $\cN_{\chain}$.
	 When there is no need to specify $\varepsilon$ and 
$\cN_{\chain}$ we just say that $\chain$ is \emph{tubular}. 
\end{definition}

\begin{definition}\label{def:horizontal}
	A planar chain $\chain=\{\ell_1, \ldots, \ell_n\}$ will be called {\em 
horizontal} if there are $\delta>0$ and a chain of open intervals $\{l_1, 
\ldots, l_n\}$ in $\R$ such that $\ell_i=l_i\times(-\delta, \delta)$ for every 
$i\in\{1, \ldots, n\}$.
\end{definition}

\begin{remark}\label{rem:stretch}
	Let $\chain$ be a tubular chain. There exists a homeomorphism 
$\widetilde{H}:\R^2\to\R^2$ 
	such that $\widetilde{H}(\chain)$ is a horizontal chain and 
$\widetilde{H}^{-1}(\chain')$ 
	is tubular for every tubular $\chain'\prec \widetilde{H}(\chain)$.
	Moreover, for $\chain=\{\ell_1, \ldots, \ell_n\}$ denote by 
$\cN_{\widetilde{H}(\chain)}=I\times\{0\}$. Note that 
	$\chain\setminus(\ell_1\cup\ell_n\cup \cN_{\chain})$ has two 
components and thus it makes sense to 
	call the components upper and lower. Let $S$ be the upper component 
of 
	$\chain\setminus(\ell_1\cup\ell_n\cup \cN_{\chain})$.
	
	There exists a homeomorphism $H:\R^2\to\R^2$ which has all the 
properties of a homeomorphism 
	$\widetilde{H}$ above and in addition satisfies:
	\begin{itemize}
		\item the endpoint $H(E(p^{C}(G_f)))=(0,0)$ (recall 
 $E$ from Definition~\ref{def:flat}) and
		\item $H(S)$ is the upper component of 
$H(\chain^*)\setminus(H(\ell_1)\cup H(\ell_n)\cup H(A))$.
	\end{itemize}
	Applying $H$ to a chain $\chain$ is called the \emph{stretching of 
$\chain$} (see Figure~\ref{fig:stretch2}).
\end{remark}

\begin{figure}
	\centering
	\begin{tikzpicture}[scale=2]
	
	\draw[thick] (3,1)--(1,1)--(1,0.5)--(4,0.5)--(4,0)--(2,0);
	
	\draw[thin] 
(3.1,1.1)--(0.9,1.1)--(0.9,0.4)--(3.9,0.4)--(3.9,0.1)--(1.9,0.1)--(1.9,
-0.1)--(4.1,-0.1)--(4.1,0.6)--(1.1,0.6)--(1.1,0.9)--(3.1,0.9)--(3.1,1.1);
	
	\draw[thin] (2.5,0.9)--(2.5,1.1);
	\draw[thin] (1.5,0.9)--(1.5,1.1);
	\draw[thin] (0.9,0.75)--(1.1,0.75);
	\draw[thin] (1.5,0.4)--(1.5,0.6);
	\draw[thin] (2.5,0.4)--(2.5,0.6);
	\draw[thin] (3.5,0.4)--(3.5,0.6);
	\draw[thin] (3.9,0.25)--(4.1,0.25);
	\draw[thin] (3.5,-0.1)--(3.5,0.1);
	\draw[thin] (2.5,-0.1)--(2.5,0.1);
	
	\node at (3.2,1.2) {\small $\chain$};
	
	\draw[solid, fill] (3,1) circle (0.03);
	
	\path[fill=gray, opacity=0.5] 
(3.1,1.1)--(0.9,1.1)--(0.9,0.4)--(3.9,0.4)--(3.9,0.1)--(1.9,0.1)-- 
(1.9,0)--(2,0)--(4,0)--(4,0.5)--(1,0.5)--(1,1)--(3.1,1)--(3.1,1.1);
	\draw[->] (2.5,-0.2)--(2.5,-0.5);
	\node at (2.65,-0.35) {\small $H$};
	\end{tikzpicture}
	\centering
	\begin{tikzpicture}
	\node [draw, dotted, shape=rectangle, minimum width=1cm, minimum 
height=3cm, anchor=center] at (1,0.5) {};
	\node [draw, dotted, shape=rectangle, minimum width=1cm, minimum 
height=3cm, anchor=center] at (2,0.5) {};
	\node [draw, dotted, shape=rectangle, minimum width=1cm, minimum 
height=3cm, anchor=center] at (3,0.5) {};
	\node [draw, dotted, shape=rectangle, minimum width=1cm, minimum 
height=3cm, anchor=center] at (4,0.5) {};
	\node [draw, dotted, shape=rectangle, minimum width=1cm, minimum 
height=3cm, anchor=center] at (5,0.5) {};
	\node [draw, dotted, shape=rectangle, minimum width=1cm, minimum 
height=3cm, anchor=center] at (6,0.5) {};
	\node [draw, dotted, shape=rectangle, minimum width=1cm, minimum 
height=3cm, anchor=center] at (7,0.5) {};
	\node [draw, dotted, shape=rectangle, minimum width=1cm, minimum 
height=3cm, anchor=center] at (8,0.5) {};
	\node [draw, dotted, shape=rectangle, minimum width=1cm, minimum 
height=3cm, anchor=center] at (9,0.5) {};
	\node [draw, dotted, shape=rectangle, minimum width=1cm, minimum 
height=3cm, anchor=center] at (10,0.5) {};
	
	\draw[thick] (1,0.5)--(10,0.5);
	\draw[solid, fill] (1,0.5) circle (0.05);
	\node at (10,1.5) {\small $H(\chain)$};
	
	\path[fill=gray, opacity=0.5] 
(10.5,0.5)--(10.5,2)--(0.5,2)--(0.5,0.5)--cycle;
	\end{tikzpicture}
	\caption{Stretching the chain $\chain$. Recall that the vertical intervals are actually identified with points and thus $\tilde{H}^{-1}(\chain')$ is tubular for 
	every tubular $\chain'\prec\tilde{H}(\chain)$.}
	\label{fig:stretch2}
\end{figure}
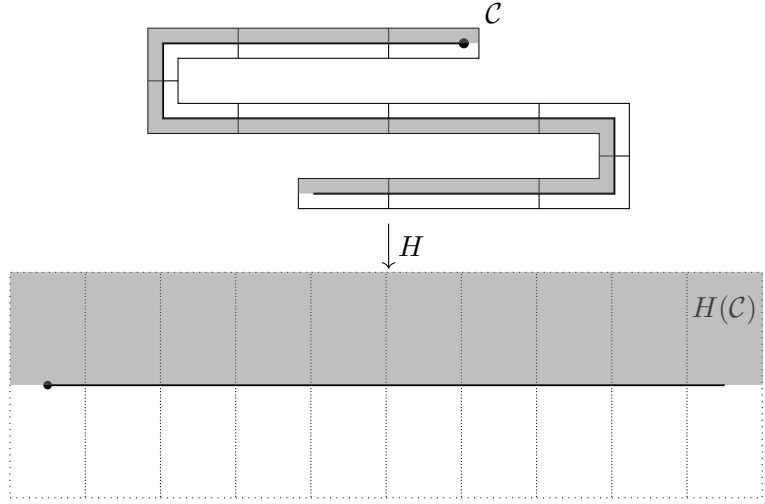

\begin{remark}\label{rem:ref}
Let $X_{\infty}, \{C_i\}_{i\in\N_0}, \{\chain_i\}_{i\in\N_0}$ be as defined 
in Section~\ref{sec:chains}. For $i\in\N_0$, let $\D_i$ be a horizontal chain 
with the same number of links as $\chain_i$ and such that 
$p^{C_i}(G_{f_{i+1}})\subset\D_i^*$ for some $C_i$-admissible permutation $p$. 
Fix $\eps>0$ and note that, after possibly dividing links of $\chain_{i+1}$ into smaller links 
(\ie refining the chain $C_{i+1}$ of $I$), there exists a tubular chain 
	$\D_{i+1}\prec\D_i$ with nerve $p^{C_i}(G_{f_{i+1}})$ such that 
	$Pat(\D_{i+1}, \D_i)=Pat(\chain_{i+1}, \chain_i)$ and $\mesh(\D_{i+1})<\eps$, see 
Figure~\ref{fig:reftube}.
\end{remark}

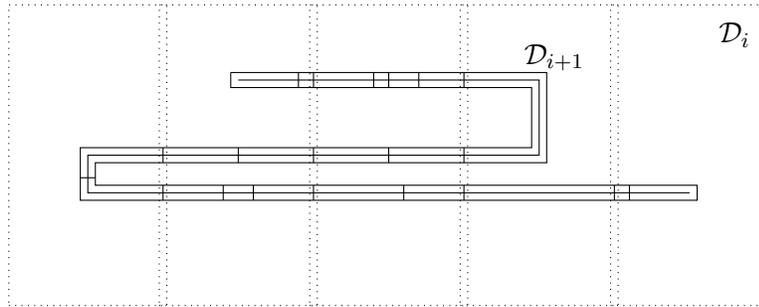
\begin{figure}[!ht]
	\centering
	\begin{tikzpicture}[scale=2]
	\node [draw, dotted, shape=rectangle, minimum width=2.1cm, minimum 
height=4cm, anchor=center] at (1,0.5) {};
	\node [draw, dotted, shape=rectangle, minimum width=2.1cm, minimum 
height=4cm, anchor=center] at (2,0.5) {};
	\node [draw, dotted, shape=rectangle, minimum width=2.1cm, minimum 
height=4cm, anchor=center] at (3,0.5) {};
	\node [draw, dotted, shape=rectangle, minimum width=2.1cm, minimum 
height=4cm, anchor=center] at (4,0.5) {};
	\node [draw, dotted, shape=rectangle, minimum width=2.1cm, minimum 
height=4cm, anchor=center] at (5,0.5) {};
	
	\draw (2,1)--(4,1)--(4,0.5)--(1,0.5)--(1,0.25)--(5,0.25);
	\draw[thin] 
(1.95,1.05)--(4.05,1.05)--(4.05,0.45)--(1.05,0.45)--(1.05,0.3)--(5.05,
0.3)--(5.05,0.2)--(0.95,0.2)--(0.95,0.55)--(3.95,0.55)--(3.95,0.95)--(1.95,
0.95)--(1.95,1.05);
	\draw[thin] (2.4,0.95)--(2.4,1.05);
	\draw[thin] (2.5,0.95)--(2.5,1.05);
	\draw[thin] (2.9,0.95)--(2.9,1.05);
	\draw[thin] (3,0.95)--(3,1.05);
	\draw[thin] (3.2,0.95)--(3.2,1.05);
	\draw[thin] (3.5,0.95)--(3.5,1.05);
	\draw[thin] (3.5,0.45)--(3.5,0.55);
	\draw[thin] (3,0.45)--(3,0.55);
	\draw[thin] (2,0.45)--(2,0.55);
	\draw[thin] (2.5,0.45)--(2.5,0.55);
	\draw[thin] (1.5,0.45)--(1.5,0.55);
	\draw[thin] (0.95,0.35)--(1.05,0.35);
	\draw[thin] (1.5,0.2)--(1.5,0.3);
	\draw[thin] (1.9,0.2)--(1.9,0.3);
	\draw[thin] (2.1,0.2)--(2.1,0.3);
	\draw[thin] (2.5,0.2)--(2.5,0.3);
	\draw[thin] (3.1,0.2)--(3.1,0.3);
	\draw[thin] (3.5,0.2)--(3.5,0.3);
	\draw[thin] (4.5,0.2)--(4.5,0.3);
	\draw[thin] (4.6,0.2)--(4.6,0.3);
	
	\node at (5.3,1.3) {\small $\D_i$};
	\node at (4.1,1.15) {\small $\D_{i+1}$};
	\end{tikzpicture}
	\caption{Constructing a tubular chain with nerve 
$p^{C}(G_f)$. Recall that vertical intervals represent points.}
	\label{fig:reftube}
\end{figure}

\begin{definition}
	Let $H:\R^2\to\R^2$ be a stretching of some tubular chain $\chain$. If 
$\chain'$ is a nice chain in $\R^2$ refining $\chain$ and there is an interval 
map $g: I\to I$ such that $p^{C}(G_g)$ is a nerve of $H(\chain')$, then we say 
that \emph{$\chain'$ follows $p^{C}(G_g)$ in $\chain$}.
\end{definition}

Now we discuss compositions of chain refinements. Let $f, g: I\to I$ be 
piecewise linear surjections. Let $0=t_0<t_1< \ldots< t_{m}<t_{m+1}=1$ be 
the critical points of $f$ and let $0=s_0<s_1< \ldots< s_{n}<s_{n+1}=1$ be
the critical points of $g$. Let $C_1$ and $C_2$ be nice chain covers of $I$, let 
$p_1:\{0, 1, \ldots, m\}\to\{0, 1, \ldots, m\}$ 
be an admissible $C_1$-permutation of $G_f$ and let $p_2:\{0, 1, \ldots, 
n\}\to\{0, 1, \ldots, n\}$ be an admissible 
$C_2$-permutation of $G_g$. 

Assume $\chain''\prec\chain'\prec\chain$ are nice chains in $\R^2$ such that 
$\chain$ is horizontal 
and $p_1^{C_1}(G_f)\subset\chain^*$
(recall that $\chain^*$ denotes the union of the links of $\chain$), 
$\chain'$ is a tubular chain with $\cN_{\chain'}=p_1^{C_1}(G_f)$, and 
$\chain''$ follows $p_2^{C_2}(G_g)$ in $\chain'$. Then $\chain''$ follows 
$f\circ g$ in $\chain$ with respect to a $C_1$-admissible permutation of 
$G_{f\circ g}$ which we will denote by $p_1*p_2$ (see Figures~\ref{fig:3} and 
\ref{fig:4}).

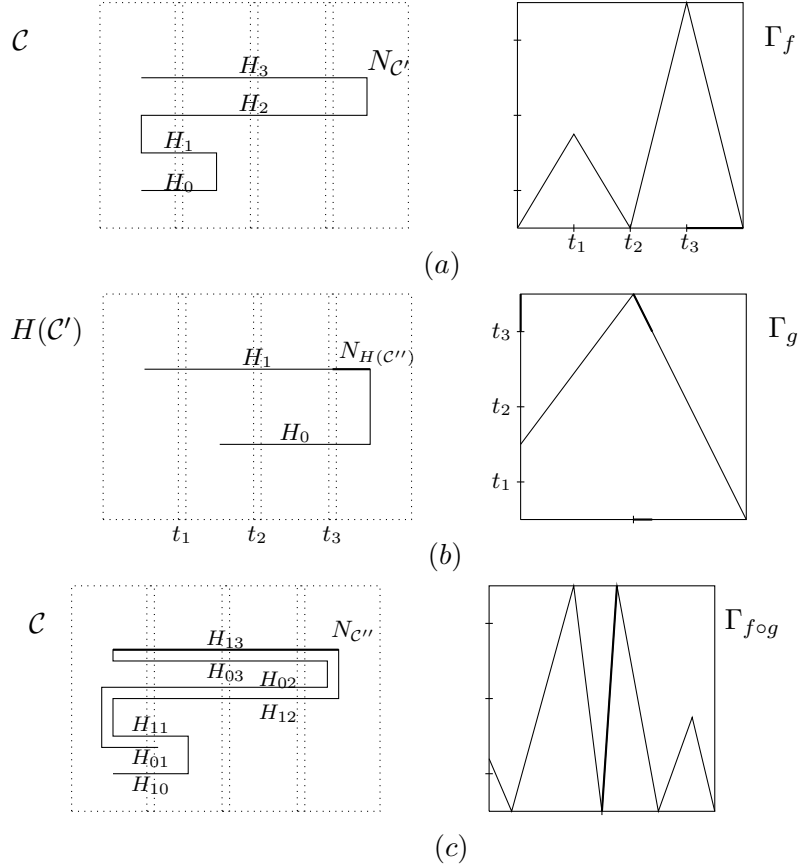
\begin{figure}[!ht]
	\centering
	\begin{tikzpicture}[scale=1]
	
	\node [draw, dotted, shape=rectangle, minimum width=1.1cm, minimum 
height=3cm, anchor=center] at (1,1) {};
	\node [draw, dotted, shape=rectangle, minimum width=1.1cm, minimum 
height=3cm, anchor=center] at (2,1) {};
	\node [draw, dotted, shape=rectangle, minimum width=1.1cm, minimum 
height=3cm, anchor=center] at (3,1) {};
	\node [draw, dotted, shape=rectangle, minimum width=1.1cm, minimum 
height=3cm, anchor=center] at (4,1) {};
	
	\draw (1,0)--(2,0)--(2,0.5)--(1,0.5)--(1,1)--(4,1)--(4,1.5)--(1,1.5);
	\node at (1.5,0.1) {\scriptsize $H_0$};
	\node at (1.5,0.65) {\scriptsize $H_1$};
	\node at (2.5,1.15) {\scriptsize $H_2$};
	\node at (2.5,1.65) {\scriptsize $H_3$};
	\node at (-0.6,2) {\small $\chain$};
	\node at (4.3,1.7) {\small $N_{\chain'}$};
	
	\draw (6,-0.5)--(6,2.5)--(9,2.5)--(9,-0.5)--(6,-0.5);
	\draw (6,-0.5)--(6.75,0.75)--(7.5,-0.5)--(8.25,2.5)--(9,-0.5);
	\draw (5.95,0)--(6.05,0);
	\draw (5.95,1)--(6.05,1);
	\draw (5.95,2)--(6.05,2);
	\draw (6.75,-0.45)--(6.75,-0.55);
	\draw (7.5,-0.45)--(7.5,-0.55);
	\draw[thick] (8.25,-0.5)--(9,-0.5);
	\draw (8.25,-0.45)--(8.25,-0.55);
	\node at (6.8,-0.7) {\scriptsize $t_1$};
	\node at (8.3,-0.7) {\scriptsize $t_3$};
	\node at (7.55,-0.7) {\scriptsize $t_2$};
	\node at (9.5,2) {\small $\Gamma_f$};
	\node at (5,-1) {\small $(a)$};
	
	\end{tikzpicture}
	\begin{tikzpicture}[scale=1]
	
	\node [draw, dotted, shape=rectangle, minimum width=1.1cm, minimum 
height=3cm, anchor=center] at (1,1) {};
	\node [draw, dotted, shape=rectangle, minimum width=1.1cm, minimum 
height=3cm, anchor=center] at (2,1) {};
	\node [draw, dotted, shape=rectangle, minimum width=1.1cm, minimum 
height=3cm, anchor=center] at (3,1) {};
	\node [draw, dotted, shape=rectangle, minimum width=1.1cm, minimum 
height=3cm, anchor=center] at (4,1) {};
	
	\draw (2,0.5)--(4,0.5)--(4,1.5)--(1,1.5);
	\node at (3,0.65) {\scriptsize $H_0$};
	\node at (2.5,1.65) {\scriptsize $H_1$};
	
	\node at (-0.3,2) {\small $H(\chain')$};
	\node at (4.1,1.7) {\scriptsize $N_{H(\chain'')}$};
	
	\draw (6,-0.5)--(6,2.5)--(9,2.5)--(9,-0.5)--(6,-0.5);
	\draw (6,0.5)--(7.5,2.5)--(9,-0.5);
	\draw (5.95,0)--(6.05,0);
	\draw (5.95,1)--(6.05,1);
	\draw (5.95,2)--(6.05,2);
	\draw[thick] (6,2)--(6,2.5);
	\draw[thick] (7.5,-0.5)--(7.75,-0.5);
	\draw[thick] (7.5,2.5)--(7.75,2);
	\draw (7.5,-0.45)--(7.5,-0.55);
	\node at (5.75,0) {\scriptsize $t_1$};
	\node at (5.75,1) {\scriptsize $t_2$};
	\node at (5.75,2) {\scriptsize $t_3$};
	\node at (9.5,2) {\small $\Gamma_g$};
	
	\draw[thick] (3.5,1.5)--(4,1.5);
	\node at (5,-1) {\small $(b)$};
	
	\node at (1.5,-0.7) {\scriptsize $t_1$};
	\node at (2.5,-0.7) {\scriptsize $t_2$};
	\node at (3.5,-0.7) {\scriptsize $t_3$};
	
	\end{tikzpicture}
	\begin{tikzpicture}[scale=1]
	
	\node [draw, dotted, shape=rectangle, minimum width=1.1cm, minimum 
height=3cm, anchor=center] at (1,1) {};
	\node [draw, dotted, shape=rectangle, minimum width=1.1cm, minimum 
height=3cm, anchor=center] at (2,1) {};
	\node [draw, dotted, shape=rectangle, minimum width=1.1cm, minimum 
height=3cm, anchor=center] at (3,1) {};
	\node [draw, dotted, shape=rectangle, minimum width=1.1cm, minimum 
height=3cm, anchor=center] at (4,1) {};
	
	\draw 
(1,0)--(2,0)--(2,0.5)--(1,0.5)--(1,1)--(4,1)--(4,1.65)--(1,1.65)--(1,1.5)--(3.85
,1.5)--(3.85,1.15)--(0.85,1.15)--(0.85,0.35)--(1.6,0.35);
	
	\draw[thick] (4,1.65)--(1,1.65);
	\node at (2.5,1.79) {\tiny $H_{13}$};
	\node at (2.5,1.35) {\tiny $H_{03}$};
	\node at (3.2,1.25) {\tiny $H_{02}$};
	\node at (3.2,0.8) {\tiny $H_{12}$};
	\node at (1.5,0.65) {\tiny $H_{11}$};
	\node at (1.5,0.2) {\tiny $H_{01}$};
	\node at (1.5,-0.15) {\tiny $H_{10}$};
	
	\draw (6,-0.5)--(6,2.5)--(9,2.5)--(9,-0.5)--(6,-0.5);
	\draw 
(6,0.2)--(6.3,-0.5)--(28.5/4,2.5)--(7.5,-0.5)--(7.7,2.5)--(8.25,-0.5)--(8.7,
0.75)--(9,-0.5);
	\draw (5.95,0)--(6.05,0);
	\draw (5.95,1)--(6.05,1);
	\draw (5.95,2)--(6.05,2);
	\draw (7.5,-0.45)--(7.5,-0.55);
	\draw[thick] (7.5,-0.5)--(7.7,2.5);
	\node at (9.5,2) {\small $\Gamma_{f\circ g}$};
	
	\node at (0,2) {\small $\chain$};
	\node at (4.2,1.9) {\scriptsize $N_{\chain''}$};
	\node at (5.5,-1) {\small $(c)$};
	\end{tikzpicture}
	\caption{Composing refinements. In $(a)$ the horizontal chain $\chain$ 
and a nerve of $\chain'$ are drawn. Nerve $\cN_{\chain'}$ equals 
$G_{f}^{C_1}$, a flattened version of the graph $\Gamma_f$. In $(b)$ we draw 
$\chain'$ as a horizontal chain by applying $H$. Also, nerve 
$\cN_{H(\chain'')}$ is given as $G_{g}^{C_2}$, a flattened version of the graph 
$\Gamma_g$. 
	In $(c)$ we draw  $N_{\chain''}$ in $\chain$. In bold we trace the arc 
which is the top of $(id*id)^{C_1}(G_{f\circ g})=N_{\chain''}$.}
	\label{fig:3}
\end{figure}

Define
$$
A_{ij}=\{x\in I: x\in[s_i, s_{i+1}], g(x)\in[t_j, t_{j+1}]\},
$$
for $i\in\{0, 1, \ldots, n\}, j\in\{0, 1, \ldots, m\}$, \ie $A_{ij}$ are 
maximal intervals on which $f\circ g$ 
is injective and possibly $A_{ij} = \emptyset$.
Let $H_{ij}$ be the horizontal branch of $G_{f\circ g}$ corresponding to 
the interval $A_{ij}$.

We want to see which branch $H_{ij}$ corresponds to the top of 
$(p_1*p_2)^{C_1}(G_{f\circ g})$. 
Denote the top of $p_1^{C_1}(G_f)$ by $p_1(H_{T_1})$, \ie $p_1(T_1)=m$. Denote 
the top of $p_2^{C_2}(G_g)$ by $p_2(H_{T_2})$, \ie $p_2(T_2)=n$. By the choice 
of orientation of $H$, the top of $(p_1*p_2)^{C_1}(G_{f\circ g})$ is 
$H_{T_2T_1}$ (see Figures~\ref{fig:3} and \ref{fig:4}).

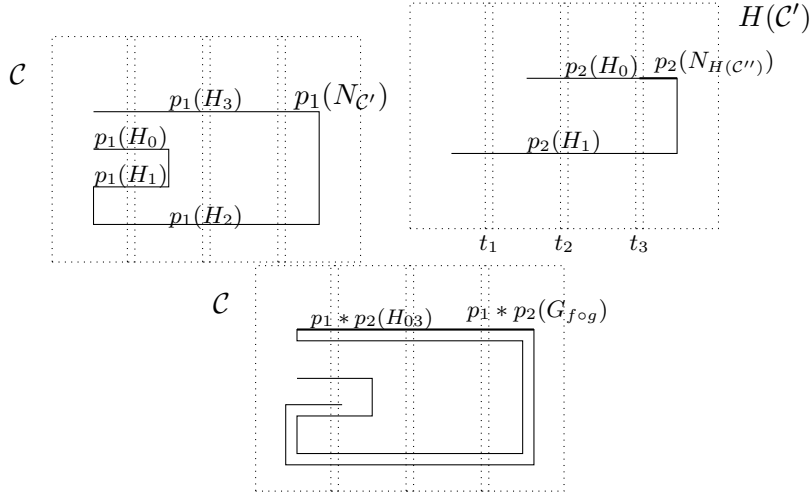
\begin{figure}[!ht]
	\centering
	\begin{tikzpicture}[scale=1]
	
	\node [draw, dotted, shape=rectangle, minimum width=1.1cm, minimum 
height=3cm, anchor=center] at (1,1) {};
	\node [draw, dotted, shape=rectangle, minimum width=1.1cm, minimum 
height=3cm, anchor=center] at (2,1) {};
	\node [draw, dotted, shape=rectangle, minimum width=1.1cm, minimum 
height=3cm, anchor=center] at (3,1) {};
	\node [draw, dotted, shape=rectangle, minimum width=1.1cm, minimum 
height=3cm, anchor=center] at (4,1) {};
	
	\draw (1,1)--(2,1)--(2,0.5)--(1,0.5)--(1,0)--(4,0)--(4,1.5)--(1,1.5);
	\node at (2.5,0.1) {\scriptsize $p_1(H_2)$};
	\node at (1.5,0.65) {\scriptsize $p_1(H_1)$};
	\node at (1.5,1.15) {\scriptsize $p_1(H_0)$};
	\node at (2.5,1.65) {\scriptsize $p_1(H_3)$};
	\node at (0,2) {\small $\chain$};
	\node at (4.3,1.7) {\small $p_1(N_{\chain'})$};
	
	\end{tikzpicture}
	\begin{tikzpicture}[scale=1]
	
	\node [draw, dotted, shape=rectangle, minimum width=1.1cm, minimum 
height=3cm, anchor=center] at (1,1) {};
	\node [draw, dotted, shape=rectangle, minimum width=1.1cm, minimum 
height=3cm, anchor=center] at (2,1) {};
	\node [draw, dotted, shape=rectangle, minimum width=1.1cm, minimum 
height=3cm, anchor=center] at (3,1) {};
	\node [draw, dotted, shape=rectangle, minimum width=1.1cm, minimum 
height=3cm, anchor=center] at (4,1) {};
	
	\draw (1,0.5)--(4,0.5)--(4,1.5)--(2,1.5);
	\node at (2.5,0.65) {\scriptsize $p_2(H_1)$};
	\node at (3,1.65) {\scriptsize $p_2(H_0)$};
	
	\node at (5.3,2.3) {\small $H(\chain')$};
	\node at (4.5,1.7) {\scriptsize $p_2(N_{H(\chain'')})$};

	\draw[thick] (3.5,1.5)--(4,1.5);

	\node at (1.5,-0.7) {\scriptsize $t_1$};
	\node at (2.5,-0.7) {\scriptsize $t_2$};
	\node at (3.5,-0.7) {\scriptsize $t_3$};
	\end{tikzpicture}
	\begin{tikzpicture}[scale=1]
	
	\node [draw, dotted, shape=rectangle, minimum width=1.1cm, minimum 
height=3cm, anchor=center] at (1,1) {};
	\node [draw, dotted, shape=rectangle, minimum width=1.1cm, minimum 
height=3cm, anchor=center] at (2,1) {};
	\node [draw, dotted, shape=rectangle, minimum width=1.1cm, minimum 
height=3cm, anchor=center] at (3,1) {};
	\node [draw, dotted, shape=rectangle, minimum width=1.1cm, minimum 
height=3cm, anchor=center] at (4,1) {};
	
	\draw 
(1,1)--(2,1)--(2,0.5)--(1,0.5)--(1,0)--(4,0)--(4,1.5)--(1,1.5)--(1,1.65)--(4.15,
1.65)--(4.15,-0.15)--(0.85,-0.15)--(0.85, 0.65)--(1.6,0.65);
	\draw[thick] (1,1.65)--(4.15,1.65);
	
	\node at (0,2) {\small $\chain$};
	\node at (4.2,1.9) {\scriptsize $p_1*p_2(G_{f\circ g})$};
	\node at (2,1.8) {\tiny $p_1*p_2(H_{03})$};
	
	\end{tikzpicture}
	\caption{Composing permuted refinements. Here $p_1=(0\ 2)$ and $p_2=(0\ 
1)$ are admissible. The top of $p_1(N_{\chain'})$ is $p_1(H_3)$, so $T_1=3$. 
The top of $p_2(N_{H(\chain'')})$ is $p_2(H_0)$, so $T_2=0$. Thus, the top of 
$(p_1*p_2)^{C_1}(G_{f\circ g})$ is $H_{T_2T_1}=H_{03}$ (in bold).}
	\label{fig:4}
\end{figure}

\section{Construction of the embeddings}\label{sec:emb}

Let $X_{\infty}=\underleftarrow{\lim}\{I, f_i\}$ where for every $i\in\N$ the 
map $f_i$ is a continuous  piecewise linear surjection with 
critical points $0=t_0^i<t_1^i< \ldots< t_{m(i)}^i<t_{m(i)+1}^i=1$. 
Let $I_k^i=[t_k^i, t_{k+1}^i]$ for every $i\in\N$ and every $k\in\{0, \ldots, 
m(i)\}$.
We construct chains $(C_n)_{n\in\N_0}$ and $(\chain_n)_{n\in\N_0}$ as before, 
such that for each $i \in \N_0$, $k\in\{0, \ldots, m(i+1)\}$ and $l \in C_i$,
$f_{i+1}(I_{k}^{i+1}) \not\subset l$. The flattened graph of 
$f_{i}$ will be denoted by $G_{f_{i}}=H^i_0\cup V^i_1\cup\ldots\cup 
V^i_{m(i)}\cup H^i_{m(i)}$ for each $i\in\N_0$.

\begin{theorem}\label{thm:algorithm}
Let $x=(x_0, x_1, x_2, \dots)\in X_{\infty}$ be such that
for each $i\in\N_0$,  $x_i\in 
I_{k(i)}^i$  and there exists an admissible permutation  
	(with respect to $C_{i-1}$) $p_i:\{0, \ldots, m(i)\}\to\{0, \ldots, 
m(i)\}$ of $G_{f_{i}}$ 
	such that $p_i(k(i))=m(i)$. Then there exists a planar embedding of 
$X_{\infty}$ such that $x$ is accessible.
\end{theorem}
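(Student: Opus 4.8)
The plan is to build the planar embedding of $X_\infty$ as a nested intersection of tubular planar chains whose patterns match those of the natural chains $\chain_n\prec\chain_{n-1}$, and then invoke Lemma~\ref{lem:patterns} to conclude that the resulting continuum is homeomorphic to $X_\infty$, while arranging the geometry so that the point corresponding to $x$ sits at the ``top'' of every chain and is therefore accessible. Concretely, I would proceed inductively. Start with a horizontal chain $\D_0$ with the same number of links as $\chain_0$; by Remark~\ref{rem:ref}, after possibly subdividing links of $\chain_1$ we obtain a tubular chain $\D_1\prec\D_0$ whose nerve is the permuted graph $p_1^{C_0}(G_{f_1})$ and with $\mathrm{Pat}(\D_1,\D_0)=\mathrm{Pat}(\chain_1,\chain_0)$ and $\mathrm{mesh}(\D_1)$ as small as we like. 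The hypothesis $p_1(k(1))=m(1)$ guarantees that the horizontal branch $H^1_{k(1)}$ containing the coordinate $x_1$ is at the top of $p_1^{C_0}(G_{f_1})$.

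The key device for iterating is the stretching homeomorphism $H$ of Remark~\ref{rem:stretch}: having built $\D_{i}$ as a tubular chain, apply the stretching $H_i$ so that $H_i(\D_i)$ becomes horizontal, with the top branch of its nerve mapped to the ``upper'' side and (crucially) $H_i(E(p_i^{C_{i-1}}(G_{f_i})))=(0,0)$. Inside this horizontal chain we again apply Remark~\ref{rem:ref} to produce a tubular chain following $p_{i+1}^{C_i}(G_{f_{i+1}})$ with the correct pattern and small mesh, and pull everything back by $H_i^{-1}$. The composition-of-refinements picture from Section~\ref{sec:stretching} (the $p_1*p_2$ construction and the computation that the top of $(p_1*p_2)^{C_1}(G_{f\circ g})$ is $H_{T_2T_1}$) is exactly what certifies that, after $n$ steps, the branch of the composed nerve carrying the coordinate $x_n$ remains at the top. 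Taking the intersection $\phi(X_\infty):=\bigcap_{n}\D_n^*$ and using that $\mathrm{mesh}(\D_n)\to0$, Lemma~\ref{lem:patterns} yields a homeomorphism $\phi:X_\infty\to\phi(X_\infty)\subset\R^2$.

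It remains to see that $\phi(x)$ is accessible. Because $x_i$ lies in the top branch $H^i_{k(i)}$ for every $i$ and stretchings are chosen compatibly (each $S$-component, i.e.\ the ``upper'' side, is preserved), the point $\phi(x)=\bigcap_n \ell^{(n)}$ is contained, for each $n$, in a link of $\D_n$ lying against the top edge of the previous stretched chain; equivalently, there is a nested sequence of ``vertical'' approach regions above $\phi(x)$ whose diameters shrink to zero and which meet $\phi(X_\infty)$ only in points approaching $\phi(x)$. Concatenating short vertical segments through these regions (using the normalization $H_i(E)=(0,0)$ to line them up) produces an arc $A$ from outside $\phi(X_\infty)$ to $\phi(x)$ with $A\cap\phi(X_\infty)=\{\phi(x)\}$, which is the definition of accessibility.

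I expect the main obstacle to be the bookkeeping that makes the induction actually close: one must choose, at each stage, the subdivision of $C_{i+1}$ and the tubular chain $\D_{i+1}$ so that (a) the pattern equals $\mathrm{Pat}(\chain_{i+1},\chain_i)$ \emph{after} the subdivision is reflected back in $\chain_{i+1}$, (b) the stretching $H_i$ sends the relevant endpoint to the origin and preserves the upper/lower sides so that the ``top'' branch stays on top under composition, and (c) the mesh shrinks fast enough that $\bigcap_n \D_n^*$ has the right homeomorphism type and the approach arc has finite length and meets the continuum only at $\phi(x)$. Verifying that the admissibility hypotheses on the $p_i$ are exactly what is needed to keep $p_i^{C_{i-1}}(G_{f_i})$ free of self-intersections throughout this process (so that the chains remain \emph{nice}) is the technical heart, but it is precisely the content that Sections~\ref{sec:permuting} and~\ref{sec:stretching} were set up to deliver.
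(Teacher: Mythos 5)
Your proposal follows essentially the same route as the paper: a horizontal chain $\D_0$, tubular refinements with nerves $p_{i}^{C_{i-1}}(G_{f_i})$ built via Remark~\ref{rem:ref} after stretching (Remark~\ref{rem:stretch}), pattern-matching with the natural chains so that Lemma~\ref{lem:patterns} gives the homeomorphism, and the observation that the composition rule for tops keeps the branch $H_{k(i)\ldots k(1)}$ carrying $x$ at the top of every $\cN_{\D_i}$. The only difference is your final accessibility step: rather than concatenating infinitely many short vertical segments (which would require extra care about convergence and disjointness from the continuum), the paper notes that the limit $H=\lim_i H_{k(i)\ldots k(1)}$ is a horizontal arc $[a,b]\times\{h\}$ with all of $\phi(X_\infty)$ lying in the half-plane $y\le h$, so a single vertical segment $\{p_1\}\times[h,h+1]$ already witnesses accessibility.
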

\begin{proof}
	Fix a strictly decreasing sequence $(\eps_i)_{i\in\N}$ such that 
$\eps_i\to 0$ as $i\to\infty$.
	Let $\D_0$ be a nice horizontal chain in $\R^2$ with the same number of 
links as $\chain_0$. 
	By Remark~\ref{rem:ref} we can find a tubular chain 
$\D_1\prec\D_0$ with nerve $p_1^{C_0}(G_{f_1})$, such that $Pat(\D_1, 
\D_0)=Pat(\chain_1, \chain_0)$ and $\mesh(\D_1)<\eps_1$. Note that $p_1(k(1))=m(1)$.
	
	Let $F:\R^2\to\R^2$ be a stretching of $\D_1$ (see 
Remark~\ref{rem:stretch}). Again using Remark~\ref{rem:ref} we can define $F(\D_2)\prec F(\D_1)$ such that 
$\mesh(\D_2)<\eps_2$ ($F$ is uniformly continuous), $Pat(F(\D_2), 
F(\D_1))=Pat(\chain_2, \chain_1)$ and nerve of $F(\D_2)$ is 
$p_2^{C_1}(G_{f_2})$. Thus $H^2_{k(2)}$ is the top of $N_{F(\D_2)}$. By the 
arguments in the previous section, the top of $N_{\D_2}$ is $H_{k(2)k(1)}$.
	
	As in the previous section, denote the maximal intervals of 
monotonicity of $f_1\circ\ldots\circ f_i$ by
	$$A_{n(i)\ldots n(1)}:=\{x\in I: x\in I_{n(i)}^i, f_i(x)\in 
I_{n(i-1)}^{i-1}, \ldots, f_1\circ\ldots\circ f_{i-1}(x)\in I_{n(1)}^1\},$$
	and denote the corresponding horizontal intervals of 
$G_{f_1\circ\ldots\circ f_i}$ by $H_{n(i)\ldots n(1)}$.
	
	Assume that we have constructed a sequence of chains 
$\D_i\prec\D_{i-1}\prec\ldots\prec\D_1\prec\D_0$. Take a stretching 
$F:\R^2\to\R^2$ of $\D_i$ and define $F(\D_{i+1})\prec F(\D_i)$ such that 
$\mesh(\D_{i+1})<\eps_{i+1}$, $Pat(F(\D_{i+1}), F(\D_i))=Pat(\chain_{i+1}, 
\chain_i)$ and such that a nerve of $F(\D_{i+1})$ is 
$p_{i+1}^{C_i}(G_{f_{i+1}})$, which is possible by Remark~\ref{rem:ref}. Note 
that the top of ${\mathcal N}_{\D_{i+1}}$ is $H_{k(i+1)\ldots k(1)}$.
	
	Since $Pat(F(\D_{i+1}), F(\D_i))=Pat(\D_{i+1}, \D_i)$ 
	for every $i\in\N_0$ and by the choice of the sequence $(\eps_i)$, 
Lemma~\ref{lem:patterns} yields that
	$\cap_{n\in\N_0}\D_n^*$ is homeomorphic to $X_{\infty}$. 
	Let $\phi(X_{\infty})=\cap_{n\in\N_0}\D_n^*$.
	
	To see that $x$ is accessible, note that 
$H=\lim_{i\to\infty}H_{k(i)\ldots k(1)}$ is a well-defined horizontal arc 
	in $\phi(X_{\infty})$ (possibly degenerate). Let $H=[a, b]\times\{h\}$ 
for some $h\in\R$. Note that for every $y=(y_1, y_2)\in\phi(X_{\infty})$ it 
holds that $y_2\leq h$. Thus every point $p=(p_1, h)\in H$ is accessible by the 
vertical planar arc $\{p_1\}\times[ h, h+1]$. Since $x\in H$, the construction 
is complete. 
\end{proof}

\section{Zigzags}\label{sec:zigzags}
\begin{definition}\label{def:zigzag}
	Let $f: I\to I$ be a continuous piecewise monotone surjection with 
critical points $0=t_0<t_1< \ldots< t_{m}<t_{m+1}=1$. Let
$I_k=[t_k, t_{k+1}]$ for every $k\in\{0, \ldots, m\}$. We say that $I_k$ is 
\emph{inside a zigzag of $f$} if there exist critical points $a$ and $e$ of $f$ 
such that $a<t_k<t_{k+1}<e\in I$ and either
\begin{enumerate}
		\item $f(t_k)>f(t_{k+1})$ and $f|_{[a,e]}$ assumes its global maximum at $a$ and its global minimum
at $e$, or
		\item $f(t_k)<f(t_{k+1})$ and $f|_{[a,e]}$ assumes its global minimum at $a$ and its 
		global maximum at $e$.
	\end{enumerate}
Then we say that $x\in \mathring{I}_k=I_k\setminus\{t_k,t_{k+1}\}$ is {\em inside a zigzag of $f$}
(see Figure~\ref{fig:zigzag}).
We also say that \emph{$f$ contains a zigzag} if there is a point inside a zigzag of $f$.
\end{definition}

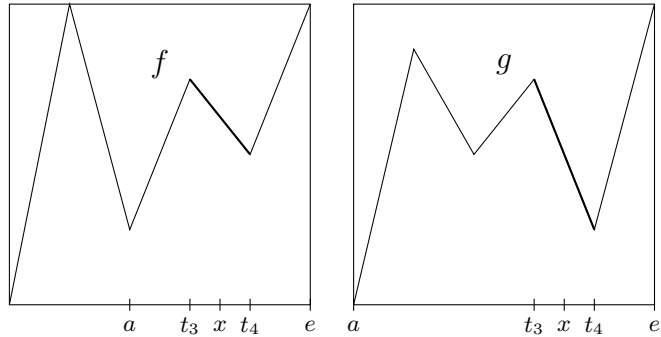
\begin{figure}[!ht]
	\centering
	\begin{tikzpicture}[scale=4]
	\draw (0,0)--(1,0)--(1,1)--(0,1)--(0,0);
	\draw (0,0)--(1/5,1)--(2/5,1/4)--(3/5,3/4)--(4/5,1/2)--(1,1);
	\draw (2/5,-0.02)--(2/5,0.02);
	\node at (0.5, 0.8) { $f$};
	\node at (2/5, -0.07) {\scriptsize $a$};
	\draw (3/5,-0.02)--(3/5,0.02);
	\node at (3/5, -0.07) {\scriptsize $t_3$};
	\draw (7/10,-0.02)--(7/10,0.02);
	\node at (7/10, -0.07) {\scriptsize $x$};
	\draw (4/5,-0.02)--(4/5,0.02);
	\node at (4/5, -0.07) {\scriptsize $t_4$};
	\draw (1,-0.02)--(1,0.02);
	\node at (1, -0.07) {\scriptsize $e$};
	\draw[thick] (3/5,3/4)--(4/5,1/2);
	\end{tikzpicture}
	\begin{tikzpicture}[scale=4]
	\draw (0,0)--(1,0)--(1,1)--(0,1)--(0,0);
	\draw (0,0)--(0.2,0.85)--(0.4,0.5)--(0.6,0.75)--(0.8,0.25)--(1,1);
	\draw (0,-0.02)--(0,0.02);
	\node at (0.5, 0.8) { $g$};
	\node at (0, -0.07) {\scriptsize $a$};
	\draw (3/5,-0.02)--(3/5,0.02);
	\node at (3/5, -0.07) {\scriptsize $t_3$};
	\draw (7/10,-0.02)--(7/10,0.02);
	\node at (7/10, -0.07) {\scriptsize $x$};
	\draw (4/5,-0.02)--(4/5,0.02);
	\node at (4/5, -0.07) {\scriptsize $t_4$};
	\draw (1,-0.02)--(1,0.02);
	\node at (1, -0.07) {\scriptsize $e$};
	\draw[thick] (0.6,0.75)--(0.8,0.25);
	\end{tikzpicture}
	\caption{The interval $[t_3,t_4]$ is inside a zigzag of $f$ and $g$.}
	\label{fig:zigzag}
\end{figure}

\begin{lemma}\label{lem:zigzag}
	Let $f: I\to I$ be a continuous piecewise linear surjection with 
 critical points $0=t_0<t_1<\ldots <t_m<t_{m+1}=1$. If there is
$k\in\{0, \ldots, m\}$ such that $I_k=[t_k, 
t_{k+1}]$ is not inside a zigzag of $f$, then there 
exists an admissible permutation $p$ of $G_f$ (with respect to any nice chain 
$C$) such that $p(k)=m$.
\end{lemma}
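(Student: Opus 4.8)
The plan is to induct on the number $m$ of interior critical points, at each step deleting a single ``fold'' of $f$ whose vertical connector sits at a globally extreme critical value, and transferring admissible permutations back along this reduction.

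First, reductions. Replacing $f$ by $x\mapsto f(1-x)$ reverses the order of the branches, interchanges the two cases of Definition~\ref{def:zigzag}, and converts an admissible permutation with a given branch on top into one with the mirror branch on top; so we may assume $f(t_k)>f(t_{k+1})$. If $k\in\{0,m\}$ then $I_k$ is automatically not inside a zigzag, and the identity permutation (for $k=m$) or $p(j)=m-j$ (for $k=0$) is admissible with respect to every nice chain and satisfies $p(k)=m$; so assume $0<k<m$, whence $f(t_{k-1})<f(t_k)$ and $f(t_{k+1})<f(t_{k+2})$, i.e.\ $t_k$ is a local maximum and $t_{k+1}$ a local minimum of $f$. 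I also record the following reformulation: a permutation $p$ is $C$-admissible for \emph{every} nice chain $C$ as soon as, for all $i\in\{0,\dots,m-1\}$ and all $j$ with $p(j)$ strictly between $p(i)$ and $p(i+1)$, the value $f(t_{i+1})$ does not lie in the open interval with endpoints $f(t_j)$ and $f(t_{j+1})$; indeed, for a sufficiently fine chain neither alternative of Definition~\ref{def:flat} would otherwise be available. It therefore suffices to produce a $p$ with $p(k)=m$ meeting this chain-free condition.

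For the inductive step, suppose first that some interior critical point $t_\ell$ with $\ell\notin\{k,k+1\}$ realizes the global maximum (or minimum) of $f$; by the symmetry $v\mapsto 1-v$ assume the maximum. Then $f(t_\ell)\ge f(x)$ for all $x$, so the connector $V_\ell$, which lives at abscissa $f(t_\ell)$, meets the interior of no branch's horizontal extent and hence imposes no admissibility condition, whatever the heights. Let $\hat f$ be obtained by deleting the fold at $t_\ell$ — replacing $f$ on the short interval around $t_\ell$ by the monotone map with the same values at the neighbouring critical points — so $\hat f$ has fewer interior critical points and $H_k$ persists as some branch $\hat H_{\hat k}$; using that deleting a top fold only lowers $f$, and that by the reformulation above the windows relevant to a zigzag of $I_k$ lie entirely to one side of $t_\ell$, one checks that $\hat I_{\hat k}$ is again not inside a zigzag. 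The inductive hypothesis yields an admissible $\hat p$ with $\hat H_{\hat k}$ on top; one then reinserts the deleted branch(es) at suitable heights near the former position of $\hat H_{\hat k}$ and verifies admissibility of the lift, the only new potential obstructions being against the two connectors entering and leaving the deleted region, and these are controlled by the relations $f(t_{\ell-1}),f(t_{\ell+1})<f(t_\ell)$ together with the freedom granted by the unconstrained connector $V_\ell$. If instead no interior critical point realizes a global extremum, then $\min f$ and $\max f$ are attained at the endpoints $t_0,t_{m+1}$; one then truncates the outermost branch $H_0$ or $H_m$ (whichever is not $H_k$ — possible since $0<k<m$), rescales to preserve surjectivity, applies the inductive hypothesis, and reattaches the removed branch adjacent to its neighbour; here the non-zigzag property transfers immediately, since any zigzag of the truncated map pulls back to a zigzag of $f$.

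The hard part is twofold. First, one must \emph{find} the fold to delete so that simultaneously $\ell\notin\{k,k+1\}$ and the reduced interval is again outside a zigzag: this is exactly where the non-zigzag hypothesis for $I_k$ is indispensable, because the zigzag configurations are precisely those forcing every legal reduction to disturb $t_k$ or $t_{k+1}$, and one must in addition argue that \emph{some} such reduction always exists, down to the cases already treated. Second, the reinsertion/lift step demands genuine bookkeeping: the reinserted branches may carry long horizontal extents that threaten to block the connectors straddling the deleted region, so their relative heights (and occasionally those of a few neighbouring branches) must be chosen carefully rather than by a uniform recipe. I expect these two points — locating an admissible reduction and carrying out the lift — to absorb essentially all of the work, the reflection reductions and the cases $k\in\{0,m\}$ being routine.
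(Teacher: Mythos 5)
Your submission is a strategy outline rather than a proof: the two steps you yourself label ``the hard part'' --- locating a fold whose deletion leaves $I_k$ outside a zigzag, and lifting the admissible permutation back through the reduction --- are exactly where all the mathematical content of the lemma lives, and neither is carried out. The claim that ``the zigzag configurations are precisely those forcing every legal reduction to disturb $t_k$ or $t_{k+1}$'' is asserted, not proved, and it is the whole lemma in disguise. Worse, the reduction step as described is in genuine danger. If $t_\ell$ is the unique point where $f$ attains its global maximum, deleting that fold destroys surjectivity, so the inductive hypothesis no longer applies as stated. More seriously, deleting the fold need not preserve the non-zigzag property: a window $[a,e]$ with $a<t_k<t_{k+1}<e$ and $t_\ell\in(t_{k+1},e)$ may fail to witness a zigzag for $f$ only because $f|_{[a,e]}$ attains its maximum at the interior point $t_\ell$ rather than at $a$; after the fold is removed, the maximum of $\hat f|_{[a,e]}$ can migrate to $a$ and the same window becomes a zigzag for $\hat f$, at which point (by the paper's converse lemma) no admissible permutation of $G_{\hat f}$ puts the reduced branch on top and your induction stalls. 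You would then need to argue that \emph{some other} fold can be deleted instead, which you acknowledge but do not do.

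For comparison, the paper's proof does not induct on the number of critical points at all. It builds the permutation directly by an outward sequence of reflections anchored at $I_k$: assuming $f(t_k)>f(t_{k+1})$, it first disposes of the case where one side of $I_k$ never undercuts $f(t_{k+1})$ (resp.\ overshoots $f(t_k)$) by a single reflection; otherwise it takes the nearest $a_1<t_k$ with $f(a_1)<f(t_{k+1})$ and the nearest $e_1>t_{k+1}$ with $f(e_1)>f(t_k)$, invokes the non-zigzag hypothesis to find $e'\in[t_{k+1},e_1]$ with $f(e')\le f(a_1)$ (or symmetrically $a'$), reflects $f|_{[a_1,t_k]}$ over $f|_{[t_k,e']}$ so that $H_k$ sits on top of $G_{f|_{[a_1,e_1]}}$, and then repeats the argument on the ever larger windows $[a_2,e_2],[a_3,e_3],\dots$ until the endpoints of $I$ are reached. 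The non-zigzag hypothesis is used afresh at each stage to supply the next reflection, and admissibility is automatic because each reflection places a block of branches entirely above or below everything it crosses. If you want to salvage your deletion scheme, you would at minimum need to prove that a deletable fold preserving both surjectivity (or a surjectivity-free version of the statement) and the non-zigzag property always exists, which I expect to require essentially the same outward case analysis the paper performs directly.
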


\begin{proof}
Assume that $I_k$ is not inside a zigzag of $f$. Without loss of 
generality assume that $f(t_k)>f(t_{k+1})$. If $f(a)\geq f(t_{k+1})$ for each $a\in[0, 
t_k]$ (or if $f(e)\leq f(t_k)$ for each $e\in[t_{k+1}, 1]$) we are done
(see Figure~\ref{fig:refl1}) by simply 
reflecting all $H_i$, $i<k$ over $H_k$ (or reflecting all $H_i$, $i>k$ over $H_k$ in 
the second case).

\begin{figure}[!ht]
	\centering
	\begin{tikzpicture}[scale=2]
	\draw 
(0,1)--(0,2.5)--(0.2,2.5)--(0.2,2)--(0.4,2)--(0.4,3)--(0.6,3)--(0.6,1.5)--(0.8,
1.5)--(0.8,2.3)--(1,2.3)--(1,0.5)--(1.2,0.5)--(1.2,3.5);
	\draw[thick] (1,2.3)--(1,0.5);
	\draw[->] (1.5,2)--(2,2);
	\draw (2.5,0.5)--(3.7,0.5)--(3.7,3.5);
	\draw[thick] (2.5,2.3)--(2.5,0.5);
	\draw[dashed] 
(3.5,1)--(3.5,2.5)--(3.3,2.5)--(3.3,2)--(3.1,2)--(3.1,3)--(2.9,3)--(2.9,
1.5)--(2.7,1.5)--(2.7,2.3)--(2.5,2.3);
	\node at (0.95,2.4) {\small $f(t_k)$};
	\node at (1.2,0.38) {\small $f(t_{k+1})$};
	\end{tikzpicture}
	\caption{Reflections in the first part of the proof of 
Lemma~\ref{lem:zigzag}.}
	\label{fig:refl1}
\end{figure}
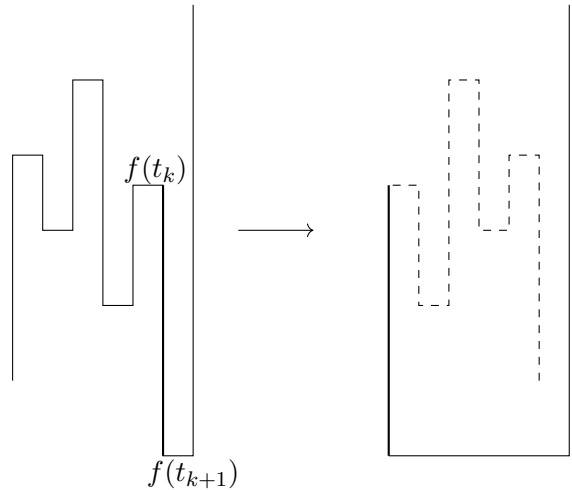

	Therefore, assume that there exists $a\in[0, t_k]$ such that 
$f(a)<f(t_{k+1})$ and there exists $e\in[t_{k+1},1]$ such that $f(e)>f(t_k)$. 
Denote the largest such $a$ by $a_1$ and the smallest such $e$ by $e_1$. Since 
$I_k$ is not inside a zigzag, there exists $e'\in[t_{k+1}, e_1]$ such that 
$f(e')\leq f(a_1)$ or there exists $a'\in[a_1, t_k]$ such that $f(a')\geq 
f(e_1)$. 
	Assume the first case and take $e'$ for which 
$f|_{[t_{k+1}, e_1]}$ attains its global minimum
	(in the second case we take $a'$ for which $f|_{[a_1, t_k]}$ attains its global maximum). Reflect $f|_{[a_1, t_k]}$ over $f|_{[t_k, e']}$ (in the 
second case we reflect $f|_{[t_{k+1}, e_1]}$ over $f|_{[a', t_{k+1}]}$). Then, 
$H_k$ becomes the top of $G_{f|_{[a_1, e_1]}}$ (see Figure~\ref{fig:refl2}).

\begin{figure}[!ht]
	\centering
	\begin{tikzpicture}[scale=2]
	\draw 
(0,0.5)--(0,2)--(0.2,2)--(0.2,1)--(0.4,1)--(0.4,1.5)--(0.6,1.5)--(0.6,0)--(0.8,
0)--(0.8,2.5);
	\draw[thick] (0.2,2)--(0.2,1);
	\node at (0.02,0.4) {\small $a_1$};
	\node at (0.1,2.1) {\small $t_k$};
	\node at (0.3,0.9) {\small $t_{k+1}$};
	\node at (0.7,-0.1) {\small $e'$};
	\node at (0.8,2.55) {\small $e_1$};
	
	\draw[->] (1.3,1.25)--(1.8,1.25);
	\draw(2.5,2)--(2.5,1)-- 
(2.7,1)--(2.7,1.5)--(2.9,1.5)--(2.9,0)--(3.3,0)--(3.3,2.5);
	\draw[thick] (2.5,2)--(2.5,1);
	\draw[dashed] (2.5,2)--(3.1,2)--(3.1,0.5);
	\end{tikzpicture}
	\caption{Reflections in the second part of the proof of 
Lemma~\ref{lem:zigzag}.}
	\label{fig:refl2}
\end{figure}
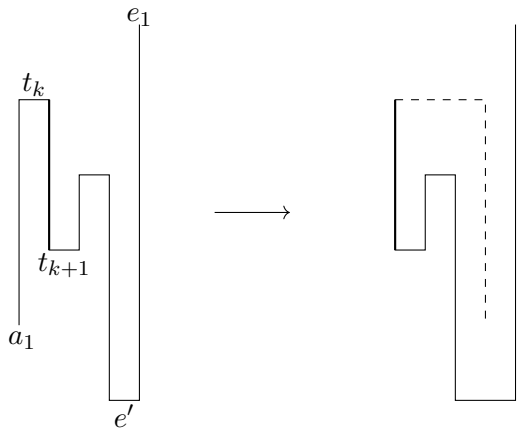	
	
	If $f(a)\geq f(e')$ for each $a\in[0, a_1]$ (or if $f(e)\leq f(a')$ for 
all $e\in[e_1, 1]$ in the second case), we are done. So assume that there is 
$a_2\in[0, a_1]$ such that $f(a_2)<f(e')$ and take the largest such $a_2$. Then 
there exists $a''\in[a_2, a_1]$ such that $f(a'')\geq f(e_1)$, and take $a''$ for which $f|_{[a_2, a_1]}$ attains its global maximum.  If $f(e)\leq f(a'')$ for each $e\in[e_1, 1]$, we 
reflect $f|_{[a_2, a'']}$ over $f|_{[e_1, 1]}$ and are done. If there is 
(minimal) $e_2>e_1$ such that $f(e_2)>f(a'')$, then there exists $e''\in[e_1, 
e_2]$ such that $f(e'')\leq f(a_2)$ and for which $f|_{[e_1, 
e_2]}$ attains a global minimum. In that case we reflect $f|_{[a'', t_k]}$ over $f|_{[t_k, e']}$ and 
$f|_{[a_2, a'']}$ over $f|_{[t_k,e'']}$ (see Figure~\ref{fig:refl3}). 

\begin{figure}[!ht]
	\centering
	\begin{tikzpicture}[scale=2]
	\draw (-0.4,-0.2)--(-0.4,2.7)--(-0.2,2.7)--(-0.2,0.5)-- 
(0,0.5)--(0,2)--(0.2,2)--(0.2,1)--(0.4,1)--(0.4,1.5)--(0.6,1.5)--(0.6,0)--(0.8,
0)--(0.8,2.5)--(1,2.5)--(1,-0.5)--(1.2,-0.5)--(1.2,3);
	\draw[thick] (0.2,2)--(0.2,1);
	\node at (-0.25,2.8) {\small $a''$};
	\node at (-0.4,-0.3) {\small $a_2$};
	\node at (-0.05,0.4) {\small $a_1$};
	\node at (0.1,2.1) {\small $t_k$};
	\node at (0.3,0.9) {\small $t_{k+1}$};
	\node at (0.7,-0.1) {\small $e'$};
	\node at (0.95,2.55) {\small $e_1$};
	\node at (1.1,-0.6) {\small $e''$};
	\node at (1.2,3.05) {\small $e_2$};
	
	\draw[->] (1.3,1.25)--(1.8,1.25);
	\draw(2.5,2)--(2.5,1)-- 
(2.7,1)--(2.7,1.5)--(2.9,1.5)--(2.9,0)--(3.5,0)--(3.5,2.5)--(3.7,2.5)--(3.7,
-0.5)--(4.1,-0.5)--(4.1,3);
	\draw[thick] (2.5,2)--(2.5,1);
	\draw[gray] (2.5,2)--(3.1,2)--(3.1,0.5)--(3.3,0.5)--(3.3,2.7);
	\draw[dashed] (3.3,2.7)--(3.9,2.7)--(3.9,-0.2);
	\end{tikzpicture}
	\caption{Reflections in the third part of the proof of 
Lemma~\ref{lem:zigzag}.}
	\label{fig:refl3}
\end{figure}
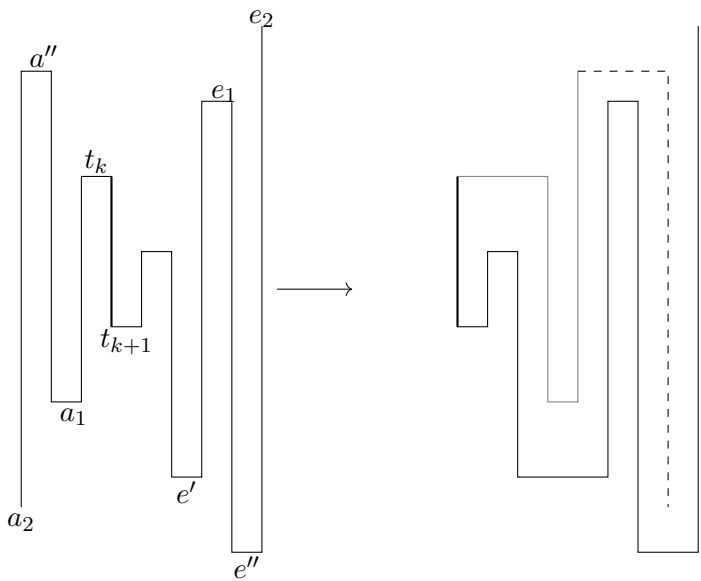

Thus we have constructed a permutation such that $H_k$ becomes the top of 
$G_{f|_{{[a_2, e_2]}}}$. We proceed by induction.
\end{proof}

\begin{theorem}\label{thm:zigzag}
	Let $X_{\infty}=\underleftarrow{\lim}\{I, f_i\}$ where each $f_i: I\to I$ 
is a continuous piecewise linear surjection. 
If $x=(x_0, x_1, x_2, \dots)\in X_{\infty}$ is such that for each $i\in\N$, $x_i$ is not inside a 
zigzag of $f_i$, then there exists an embedding of $X_{\infty}$ 
in the plane such that $x$ is accessible.
\end{theorem}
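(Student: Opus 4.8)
The plan is to deduce Theorem~\ref{thm:zigzag} directly from Theorem~\ref{thm:algorithm} together with Lemma~\ref{lem:zigzag}. Theorem~\ref{thm:algorithm} tells us that if for each coordinate $i$ we can find a $C_{i-1}$-admissible permutation $p_i$ of $G_{f_i}$ that moves the branch $I_{k(i)}^i$ containing $x_i$ to the top (that is, $p_i(k(i))=m(i)$), then $X_\infty$ admits a planar embedding in which $x$ is accessible. So the task reduces to producing such permutations under the zigzag hypothesis.

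First I would fix the natural chains $(C_n)_{n\in\N_0}$ and $(\chain_n)_{n\in\N_0}$ as in Section~\ref{sec:emb}, so that for every $i$ and every link $l\in C_{i-1}$ we have $f_i(I_k^{i+1})\not\subset l$; this is the standing assumption needed to even speak of $C_{i-1}$-admissible permutations of $G_{f_i}$. Next, for each $i\in\N$ let $k(i)$ be the index with $x_i\in I_{k(i)}^i$ (if $x_i$ happens to be a critical point, it lies in two branches and we may pick either one; by Definition~\ref{def:zigzag} the hypothesis ``$x_i$ not inside a zigzag'' then says neither branch is inside a zigzag, so the choice is harmless). By hypothesis $x_i$ is not inside a zigzag of $f_i$, which by Definition~\ref{def:zigzag} means precisely that the branch $I_{k(i)}^i$ is not inside a zigzag of $f_i$. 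Lemma~\ref{lem:zigzag} now applies: since $f_i$ is a continuous piecewise linear surjection and $I_{k(i)}^i$ is not inside a zigzag, there is an admissible permutation $p_i$ of $G_{f_i}$ with respect to any nice chain, in particular with respect to $C_{i-1}$, such that $p_i(k(i))=m(i)$.

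Having produced the family $(p_i)_{i\in\N}$, I would feed it into Theorem~\ref{thm:algorithm}: its hypotheses are exactly ``for each $i$, $x_i\in I_{k(i)}^i$ and there is a $C_{i-1}$-admissible permutation $p_i$ of $G_{f_i}$ with $p_i(k(i))=m(i)$'', which we have just verified. The conclusion is a planar embedding $\phi:X_\infty\to\R^2$ in which $\phi(x)$ is accessible, and that is exactly the assertion of Theorem~\ref{thm:zigzag}. One small bookkeeping point: Theorem~\ref{thm:algorithm} indexes coordinates starting from $x_0$ and asks for a permutation for every $i\in\N_0$, whereas Theorem~\ref{thm:zigzag} phrases the hypothesis for $i\in\N$. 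For $i=0$ there is no bonding-map constraint coming from below, so one can use the identity permutation of $G_{f_0}$ after arranging (by an innocuous choice of the initial chain $C_0$, or by simply noting that $\chain_0$ can be taken with links that are intervals and $k(0)$ arbitrary) that the $i=0$ requirement is vacuously satisfiable; alternatively, reindex so that the roles of $C_{i-1}$ and $f_i$ line up. I do not expect any genuine obstacle here — the real content is entirely in Lemma~\ref{lem:zigzag} and Theorem~\ref{thm:algorithm}, and the only thing to be careful about is this indexing/choice-of-branch matching, plus spelling out that ``$x_i$ not inside a zigzag'' is equivalent to ``the branch containing $x_i$ is not inside a zigzag,'' which is immediate from the definition.
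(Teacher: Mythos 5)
Your proposal is correct and is exactly the paper's argument: the paper proves this theorem in one line by combining Lemma~\ref{lem:zigzag} (to get an admissible permutation putting the branch containing $x_i$ at the top) with Theorem~\ref{thm:algorithm}. Your additional remarks on indexing and the choice of branch at critical points are harmless elaborations of the same route.
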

\begin{proof}
	The proof follows by Lemma~\ref{lem:zigzag} and 
Theorem~\ref{thm:algorithm}.
\end{proof}

\begin{corollary}\label{cor:nonzigzag}
	Let $X_{\infty}=\underleftarrow{\lim}\{I, f_i\}$ where each $f_i: I\to I$ 
is a continuous piecewise linear surjection which does not have zigzags. Then, for every $x\in 
X_{\infty}$ there exists an embedding of $X_{\infty}$ in the plane such that $x$ 
is accessible.
\end{corollary}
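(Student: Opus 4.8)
The plan is to derive Corollary~\ref{cor:nonzigzag} as an immediate consequence of Theorem~\ref{thm:zigzag}. The hypothesis is that each bonding map $f_i$ has no zigzags, and we must produce, for an \emph{arbitrary} point $x = (x_0, x_1, x_2, \dots) \in X_{\infty}$, a planar embedding of $X_{\infty}$ making $x$ accessible.

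First I would unwind the definitions. By Definition~\ref{def:zigzag}, saying that $f_i$ contains no zigzag means precisely that there is no point inside a zigzag of $f_i$; equivalently, for every $k \in \{0, \dots, m(i)\}$ the interval $I_k^i = [t_k^i, t_{k+1}^i]$ is not inside a zigzag of $f_i$, and hence no $x_i \in \mathring{I}_k^i$ is inside a zigzag of $f_i$. The only subtlety is that a coordinate $x_i$ might equal a critical point $t_k^i$ rather than lying in the interior of a link; but the notion "inside a zigzag" in Definition~\ref{def:zigzag} is only defined for points of $\mathring{I}_k$, so by convention (and as used in Theorem~\ref{thm:zigzag}) a critical point is never "inside a zigzag" either. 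In any case, when $f_i$ has no zigzags at all, there is vacuously no point of $I$ inside a zigzag of $f_i$.

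Next I would simply observe that the hypothesis of Theorem~\ref{thm:zigzag} is therefore satisfied for \emph{every} $x \in X_{\infty}$: for each $i \in \N$, the coordinate $x_i$ is not inside a zigzag of $f_i$, because $f_i$ has no zigzags. Applying Theorem~\ref{thm:zigzag} to this $x$ yields an embedding $\phi \colon X_{\infty} \to \R^2$ with $\phi(x)$ accessible, which is exactly the assertion of the corollary.

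There is essentially no obstacle here — the corollary is a direct specialization — so the proof is one line. To be safe I would also remark (or leave implicit) that piecewise linear surjections without zigzags do exist and form a natural class, so the statement is not vacuous, but this is not needed for the proof itself. Thus the proof reads: \emph{By hypothesis no $f_i$ has a zigzag, so for every $x = (x_0,x_1,\dots) \in X_\infty$ and every $i \in \N$ the point $x_i$ is not inside a zigzag of $f_i$. The claim now follows from Theorem~\ref{thm:zigzag}.}
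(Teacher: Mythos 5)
Your proof is correct and is exactly the argument the paper intends: the corollary is stated without a separate proof precisely because it is the immediate specialization of Theorem~\ref{thm:zigzag} to the case where no coordinate can lie inside a zigzag. Your extra remark about coordinates landing on critical points is a reasonable clarification but does not change the substance.
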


\begin{remark}
	Note that if $T:I\to I$ is a unimodal map and $x\in\UIL$, then $\UIL$ 
can be embedded in the plane such that $x$ is accessible by the previous 
corollary. This is Theorem 1 of \cite{embed}. This easily generalizes to an 
inverse limit of open interval maps (\eg generalized Knaster continua). 
\end{remark}

The following lemma shows that given arbitrary chains $\{C_i\}$, the zigzag 
condition from Lemma~\ref{lem:zigzag} cannot be improved.

\begin{lemma}
	Let $f: I\to I$ be a continuous piecewise linear surjection with critical points $0=t_0<t_1<\ldots <t_m<t_{m+1}=1$. If $I_k=[t_k, 
t_{k+1}]$ is inside a zigzag for some $k\in\{0, \ldots, m\}$, then there exists a 
nice chain $C$ covering $I$ such that $p(k)\neq m$ for every admissible permutation 
$p$ of $G_f$ with respect to $C$.
\end{lemma}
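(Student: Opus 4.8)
The plan is to exhibit, for a given $I_k$ inside a zigzag, a single ``obstructing'' nice chain $C$ that forces $H_k$ never to reach the top of any admissible permuted graph. Suppose $I_k$ is inside a zigzag with witnessing critical points $a<t_k<t_{k+1}<e$; without loss of generality take case $(1)$ of Definition~\ref{def:zigzag}, so $f(t_k)>f(t_{k+1})$ and $f|_{[a,e]}$ attains its global max at $a$ and its global min at $e$. Write $\alpha=f(a)$ and $\eta=f(e)$, so $\eta\le f(t_{k+1})<f(t_k)\le\alpha$, and note the crucial feature: every value in the open interval $(f(t_{k+1}),f(t_k))$ — in particular $f(t_{k+1})$ and $f(t_k)$ themselves — lies in the interior of $f([a,a+\delta])$ for the branch just to the right of $a$ and in the interior of $f([e-\delta,e])$ for the branch just to the left of $e$. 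I will choose $C$ so fine that $f(t_k)$ and $f(t_{k+1})$ lie in \emph{different} links of $C$, and moreover so that the two values $f(t_k),f(t_{k+1})$ each sit in a link disjoint from the links containing $\alpha$ and $\eta$; this is possible because $\eta<f(t_{k+1})<f(t_k)<\alpha$ are four distinct points of $I$ (if some coincide, e.g. $\alpha=f(t_k)$, a routine modification works, choosing $C$ to separate the three distinct values).

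Next I would argue by contradiction: suppose $p$ is a $C$-admissible permutation with $p(k)=m$, i.e. $H_k$ is at the top of $p^C(G_f)$. Consider the branches $H_i$ and $H_j$ of $G_f$ corresponding to the monotone pieces of $f$ immediately to the right of $a$ and immediately to the left of $e$, respectively; so $i<k\le j$ (with $i<j$). Because $f(t_k),f(t_{k+1})\in\mathring{f([t_i,t_{i+1}])}$ and likewise for $[t_j,t_{j+1}]$, neither clause $(1)$ nor clause $(2)$ of Definition~\ref{def:flat} can be satisfied for the pair $(i,i+1)$ against witness $k$, nor for $(j-1,j)$ against witness $k$, \emph{unless} $p(k)$ does not lie strictly between $p(i)$ and $p(i+1)$ and does not lie strictly between $p(j-1)$ and $p(j)$: indeed clause $(1)$ fails since the relevant $f$-value is in the branch image, and clause $(2)$ fails because we arranged the chain so that $f(t_k)$ (resp. $f(t_{k+1})$) shares a link with neither $\alpha=f(t_i)$ nor the endpoint value on the other side of that branch, which are $\alpha$ and $\eta$ type values. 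Spelled out: admissibility applied with the consecutive pair straddling $a$ forces that $p(k)$ is \emph{not} between the heights of the two branches flanking $a$, and similarly for $e$; combining these order constraints with $f(t_k)>f(t_{k+1})$ and the global extremal behaviour of $f$ on $[a,e]$ pins the relative vertical order of the branches $H_i,\dots,H_j$ in a way that makes it impossible for $H_k$ — which carries an interior value of $I$ on that stretch, strictly below $H_i$'s top value and strictly above $H_j$'s — to be pushed above all of them. More concretely, I expect to show that $H_i$ (the branch reaching height $\alpha$) must end up strictly above $H_k$, or $H_j$ must, because any attempt to lift $H_k$ past one of them creates a forbidden nesting detected by the chosen $C$.

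The cleanest way to organize the last step is to track a single point: let $y_0\in I$ be the common value $f(t_{k+1})$ (the right endpoint value of branch $H_k$, shared, up to the link of $C$, with an interior point of $H_i$ and of $H_j$). In $p^C(G_f)$ the vertical segment $p(V_{k+1})$ sits at the $x$-coordinate $f(\tilde t_{k+1})$ in the link of $y_0$; the branches $p(H_i)$ and $p(H_j)$ both pass through that same link at an interior point of their own. If $p(k)=m$, the top branch $p(H_k)$ lies entirely above every other branch, so in particular the vertical fibre over the link of $y_0$ meets $p(H_k)$ above both $p(H_i)$ and $p(H_j)$; but then, reading off the permutation, $p(k)>p(i),p(i+1)$ and $p(k)>p(j-1),p(j)$, which is exactly the hypothesis $p(i)<p(k)$ and $p(i+1)<p(k)$ (so $p(k)$ is not between them) — fine so far. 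The contradiction instead comes from a branch \emph{other} than $i$: somewhere between $a$ and $t_k$ there is, by the zigzag assumption (global max at $a$), no critical value exceeding $\alpha$, while between $t_{k+1}$ and $e$ there is no critical value below $\eta$; I then locate the particular consecutive pair $(r,r+1)$ whose two heights straddle $p(k)$ after permutation — such a pair must exist because $p(H_k)$ is at the top while $H_k$ is an \emph{interior} branch of the graph, so as one traverses $G_f$ from its $E$-endpoint to the other end, the level $p(k)$ is crossed an odd number of times, hence at least once by a pair $(r,r+1)$ with $p(r)<p(k)<p(r+1)$ or vice versa — and for that pair, by the extremal structure of $f$ on $[a,e]$, the value $f(t_{r+1})$ either equals or is separated by $C$ from $f(t_k)$, triggering the failure of both clauses of Definition~\ref{def:flat}. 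That is the contradiction.

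The main obstacle I anticipate is the bookkeeping in the last paragraph: making precise ``the level $p(k)$ is crossed by some consecutive pair straddling it'' and identifying that this pair's endpoint critical value is one that the chosen $C$ has separated from $f(t_k)$. This is a parity/intermediate-value argument on the permuted graph combined with the hypothesis that $f|_{[a,e]}$ has its extrema at the ends; getting the case analysis clean (especially when $a$ or $e$ is an endpoint of $I$, or when several of the values $\eta,f(t_{k+1}),f(t_k),\alpha$ coincide) is where the real work lies, but no step is deep — it is the mirror image of the reflection procedure in Lemma~\ref{lem:zigzag}, now run to produce an obstruction rather than a permutation.
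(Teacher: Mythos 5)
Your framework --- take $C$ finer than the separation between the relevant critical values, suppose $p(k)=m$, and derive an inadmissible crossing of a vertical connector with a horizontal branch that $C$ separates from the offending value --- is exactly the paper's. But both mechanisms you offer for locating that crossing fail. (i) The ``crucial feature'' is false: the zigzag hypothesis constrains only the \emph{global} extrema of $f|_{[a,e]}$, so the single monotone branch of $f$ adjacent to $a$ (or to $e$) need not sweep through $f(t_k)$ and $f(t_{k+1})$ at all, let alone contain them in the interior of its image; already in Figure~\ref{fig:zigzag} these values occur as endpoints, not interior points, of the adjacent branch images. Moreover, the admissibility clause attached to the consecutive pair flanking $a$ tests the critical value $f(a)$, which, being the extremum of $f|_{[a,e]}$, generically lies outside $f([t_k,t_{k+1}])$; hence clause (1) of Definition~\ref{def:flat} is satisfied and that pair imposes no constraint whatsoever --- you have the roles of ``pair'' and ``witness'' interchanged. (ii) The parity argument in your last paragraph is vacuous: if $p(k)=m$ then $p(k)$ is the maximal height, so no pair $(r,r+1)$ with $r,r+1\neq k$ can straddle level $p(k)$. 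The only connectors reaching height $m$ are $V_k$ and $V_{k+1}$, belonging to the pairs $(k-1,k)$ and $(k,k+1)$; the witness must be some \emph{other} branch $H_r$ with $p(r)$ between $p(k\pm1)$ and $m$ whose image contains $f(t_k)$ or $f(t_{k+1})$. (Also, separating only $f(t_k),f(t_{k+1}),\alpha,\eta$ is not enough: clause (2) could still excuse the crossing via some other critical value close to $f(t_k)$; the paper takes the mesh of $C$ below the minimal gap between \emph{all} distinct critical values.)

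The missing idea is a planarity argument on the two arms of the zigzag. Taking the innermost witnesses $t_i<t_k<t_{k+1}<t_j$ (the min of $f|_{[t_i,t_j]}$ at one of $t_i,t_j$ and the max at the other), the left arm $p(G_{f|_{[t_i,t_k]}})\cup p(V_k)$ is a connected set attached to the top branch at horizontal coordinate $f(t_k)$ and reaching horizontal coordinate $f(t_i)$, while the right arm $p(V_{k+1})\cup p(G_{f|_{[t_{k+1},t_j]}})$ is attached at $f(t_{k+1})$ and reaches $f(t_j)$. Since $[f(t_{k+1}),f(t_k)]$ is contained in the interval with endpoints $f(t_i),f(t_j)$, each arm's horizontal extent contains the other arm's attachment point, and both arms lie below height $m$ away from their attachment points; so the two disjoint arms must cross, the fine chain having removed the slack that clause (2) would otherwise provide. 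This is what the paper's case split $p(i)<p(j)<p(k)$ versus $p(j)<p(i)<p(k)$ and its conclusion that $p(H_j)$ meets some $p(V_{i'})$ encode. Without this step your proposal does not close, and the steps it does contain would fail as written.
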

\begin{proof}
	Take a nice chain cover $C$ of $I$ such that $\mesh\, 
(C)<\min\{|f(t_i)-f(t_j)|: i,j\in\{0, \ldots, m+1\}, f(t_i)\neq f(t_j)\}$. Assume 
without loss of generality that $f(t_k)>f(t_{k+1})$ and let 
$t_i<t_k<t_{k+1}<t_{j}$ be such that minimum and maximum 
of $f|_{[t_i, t_j]}$ are attained at $t_i$ and $t_j$ respectively. Assume $t_i$ is the largest and $t_{j}$ is the smallest index 
with such properties. Let $p$ be some permutation. If $p(i)<p(j)<p(k)$, then by 
the choice of $C$, $p(H_j)$ intersects $p(V_{i'})$ for some $i'\in\{i, \ldots, 
k\}$. We proceed similarly if $p(j)<p(i)<p(k)$.
\end{proof}

\begin{remark}
	Let $X_{\infty}=\underleftarrow{\lim}\{I, f_i\}$ and $x=(x_0, x_1, x_2, \ldots)\in X_{\infty}$. If there exist piecewise linear continuous 
surjections $g_i: I\to I$ and a homeomorphism $h: X_{\infty}\to 
\underleftarrow{\lim}\{I, g_i\}$ such that every projection of $h(x)$ is not in 
a zigzag of $g_i$, then $X_{\infty}$ can be embedded in the plane such that $x$ 
is accessible. We have the following two corollaries. See also 
Examples~\ref{ex:2sin1x}-\ref{ex:Nadler}.
\end{remark}

\begin{corollary}
	Let $X_{\infty}=\underleftarrow{\lim}\{I, f_i\}$ where each $f_i: I\to I$ 
is a continuous piecewise linear surjection. 
If $x=(x_0, x_1, x_2, \dots)\in X_{\infty}$ is such that $x_i$ is inside a 
zigzag of $f_i$ for at most finitely many $i\in\N$, then there exists an 
embedding of $X_{\infty}$ in the plane such that $x$ is accessible.
\end{corollary}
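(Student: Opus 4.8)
The statement to prove is the final corollary: if $x=(x_0,x_1,x_2,\dots)\in X_\infty$ has $x_i$ inside a zigzag of $f_i$ for at most finitely many $i$, then $X_\infty$ embeds in the plane with $x$ accessible. The key idea is that the embedding of Theorem~\ref{thm:algorithm} depends only on the ``tail'' of the inverse system, and an inverse limit is canonically homeomorphic to any of its tails. I would proceed as follows.

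First, let $N\in\N$ be large enough that $x_i$ is \emph{not} inside a zigzag of $f_i$ for all $i>N$. Consider the shifted inverse system $\{I, f_i\}_{i>N}$ together with the point $x'=(x_N,x_{N+1},x_{N+2},\dots)$; note $x'\in Y_\infty:=\underleftarrow{\lim}\{I,f_i\}_{i>N}$, and by construction every coordinate of $x'$ lies outside a zigzag of the corresponding bonding map. By Theorem~\ref{thm:zigzag} (applied to $Y_\infty$ and $x'$), there is a planar embedding $\psi:Y_\infty\to\R^2$ with $\psi(x')$ accessible. Second, I invoke the standard fact that the projection $\Pi:X_\infty\to Y_\infty$, $(x_0,x_1,\dots)\mapsto(x_N,x_{N+1},\dots)$, is a homeomorphism: it is continuous, and its inverse sends $(y_0,y_1,\dots)$ to $(f_1\circ\cdots\circ f_N(y_0),\,f_2\circ\cdots\circ f_N(y_0),\dots,\,f_N(y_0),\,y_0,\,y_1,\dots)$, which is well-defined since the $f_i$ are surjections and is continuous, being built from continuous maps. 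Crucially $\Pi(x)=x'$. Third, set $\phi:=\psi\circ\Pi:X_\infty\to\R^2$; this is a planar embedding, and $\phi(x)=\psi(x')$ is accessible by the choice of $\psi$. This completes the argument.

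\textbf{Remarks on the obstacle.} There is essentially no hard step here: the whole content of the corollary is already carried by Theorem~\ref{thm:zigzag} (hence by Lemma~\ref{lem:zigzag} and Theorem~\ref{thm:algorithm}), and the only additional ingredient is the tail-invariance of inverse limits, which is routine continuum theory. The one point deserving a sentence of care is verifying that $\Pi^{-1}$ is well-defined and continuous: well-definedness uses surjectivity of each $f_i$ (so that the omitted front coordinates $x_0,\dots,x_{N-1}$ are forced by the bonding relations $f_i(x_i)=x_{i-1}$, hence uniquely recovered as $x_{i-1}=f_i(x_i)=\cdots=f_i\circ\cdots\circ f_N(x_N)$), and continuity is immediate from the product topology since each recovered coordinate is a composition of the continuous maps $f_j$ applied to the $N$-th coordinate of the point in $Y_\infty$. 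Everything else is a direct composition of already-established results, so the proof is short.

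\begin{proof}
	Choose $N\in\N$ such that $x_i$ is not inside a zigzag of $f_i$ for every $i>N$. Let $Y_\infty=\underleftarrow{\lim}\{I, f_{N+i}\}_{i\in\N}$ and put $x'=(x_N, x_{N+1}, x_{N+2}, \dots)\in Y_\infty$. By the choice of $N$, every coordinate of $x'$ lies outside a zigzag of the corresponding bonding map, so by Theorem~\ref{thm:zigzag} there is a planar embedding $\psi: Y_\infty\to\R^2$ with $\psi(x')$ accessible.

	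The map $\Pi: X_\infty\to Y_\infty$, $\Pi(y_0, y_1, y_2, \dots)=(y_N, y_{N+1}, y_{N+2}, \dots)$, is a homeomorphism: it is continuous as a coordinate restriction, and since each $f_i$ is a surjection, its inverse is given by $(z_0, z_1, \dots)\mapsto(f_1\circ\cdots\circ f_N(z_0), f_2\circ\cdots\circ f_N(z_0), \dots, f_N(z_0), z_0, z_1, \dots)$, which is well-defined and continuous. Note $\Pi(x)=x'$.

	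Then $\phi:=\psi\circ\Pi: X_\infty\to\R^2$ is a planar embedding and $\phi(x)=\psi(x')$ is accessible.
\end{proof}
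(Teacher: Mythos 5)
Your proof is correct and is essentially the paper's argument: the paper likewise observes that $\underleftarrow{\lim}\{I,f_i\}$ is homeomorphic to the shifted system $\underleftarrow{\lim}\{I,f_{i+n}\}$ and then applies Theorem~\ref{thm:zigzag}; you have merely written out the shift homeomorphism explicitly. (One tiny quibble: recovering the front coordinates needs only the bonding relations $f_i(y_i)=y_{i-1}$, not surjectivity of the $f_i$, but this does not affect the argument.)
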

\begin{proof}
	Since $\underleftarrow{\lim}\{I, f_i\}$ and $\underleftarrow{\lim}\{I, 
f_{i+n}\}$ are homeomorphic for every $n\in\N$, the proof follows using 
Theorem~\ref{thm:zigzag}.
\end{proof}

\begin{corollary}\label{cor:iterations}
	Let $f$ be a continuous piecewise linear surjection with finitely many 
critical points and $x=(x_0, x_1, x_2, \dots)\in 
X_{\infty}=\underleftarrow{\lim}\{I, f\}$. If there exists $k\in\N$ such that 
$x_i$ is not inside a zigzag of $f^k$ for all but finitely many $i$ , then there 
exists a planar embedding of $X_{\infty}$ such that $x$ is accessible.
\end{corollary}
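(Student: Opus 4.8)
The plan is to reduce Corollary~\ref{cor:iterations} to the preceding corollary by passing to the $k$-th power of the bonding map. The key observation is that $X_\infty = \underleftarrow{\lim}\{I, f\}$ is homeomorphic to $\underleftarrow{\lim}\{I, f^k\}$ via a natural map that regroups coordinates: the homeomorphism $\Phi$ sends $(x_0, x_1, x_2, \dots)$ to $(x_0, x_k, x_{2k}, \dots)$. Indeed, $f^k(x_{(j+1)k}) = x_{jk}$, so this lands in $\underleftarrow{\lim}\{I, f^k\}$, and it is a standard fact that this is a homeomorphism (its inverse fills in the missing coordinates using surjectivity of $f$, which is automatic since all iterates of a surjection are surjections).

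First I would set $g = f^k$, which is again a continuous piecewise linear surjection with finitely many critical points. By hypothesis there is a finite set $F \subset \N$ such that for all $i \notin F$, the coordinate $x_{ik}$ is not inside a zigzag of $g = f^k$. Then the point $\Phi(x) = (x_0, x_k, x_{2k}, \dots) \in \underleftarrow{\lim}\{I, g\}$ has the property that all but finitely many of its coordinates avoid the zigzags of the (single, fixed) bonding map $g$. Applying the previous corollary (the one immediately above, for inverse limits of a sequence $\{I, f_i\}$ with all $x_i$ outside zigzags except finitely many — here all $f_i = g$) to $\underleftarrow{\lim}\{I, g\}$ and the point $\Phi(x)$, we obtain a planar embedding $\psi: \underleftarrow{\lim}\{I, g\} \to \R^2$ such that $\psi(\Phi(x))$ is accessible. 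Composing, $\psi \circ \Phi: X_\infty \to \R^2$ is a planar embedding of $X_\infty$ in which the image of $x$ is accessible, which is exactly what is claimed.

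The one genuine technical point — really the only obstacle — is verifying that $\Phi$ is a homeomorphism, and more precisely that it identifies $x$ with a point whose coordinates are literally the $x_{ik}$. The regrouping map is continuous (it is a restriction of a coordinate projection composed with inclusion), injective (two points of $X_\infty$ agreeing on all indices $ik$ must agree everywhere, since $x_{ik+r} $ is determined up to the forward orbit but in an inverse limit the full sequence is determined by any tail, hence by $x_{(i+1)k}$ through $f^{k-r}$), and surjective onto $\underleftarrow{\lim}\{I, g\}$. Compactness of $X_\infty$ then upgrades the continuous bijection to a homeomorphism. All of this is routine and standard in inverse limit theory, so I would state it briefly and cite the regrouping homeomorphism rather than belabor it; with that in hand the corollary is immediate.
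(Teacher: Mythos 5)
Your proposal is correct and is essentially the paper's own argument: the paper's proof consists of the single observation that $\underleftarrow{\lim}\{I, f^k\}$ and $X_{\infty}$ are homeomorphic, with the regrouping homeomorphism and the appeal to the preceding corollary left implicit. You have simply spelled out those routine details, and your verification that the relevant coordinates $x_{ik}$ of the regrouped point avoid zigzags of $f^k$ follows correctly from the hypothesis.
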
	
\begin{proof}
	Note that $\underleftarrow{\lim}\{I, f^k\}$ and $X_{\infty}$ are 
homeomorphic.
\end{proof}

We give applications of Corollary~\ref{cor:iterations} in the following 
examples.

\begin{example}\label{ex:2sin1x}
	Let $f$ be a piecewise linear map such that $f(0)=0$, $f(1)=1$ and with 
critical points $\frac{1}{4}, \frac{3}{4}$, where $f(\frac{1}{4})=\frac{3}{4}$ 
and $f(\frac{3}{4})=\frac{1}{4}$ (see Figure~\ref{fig:spiral}). 
	
	\begin{figure}[!ht]
		\centering
		\begin{tikzpicture}[scale=3]
		\draw[dashed] (0,0)--(1,1);
		\draw (0,0) -- (1/4, 3/4) -- (3/4, 1/4) -- (1,1);
		\draw (0,0) -- (0,1) -- (1,1) -- (1,0) -- (0,0);
		\draw[dashed] (1/4, 0) -- (1/4, 1);
		\draw[dashed] (1/2, 0) -- (1/2, 1);
		\draw[dashed] (3/4, 0) -- (3/4, 1);
		\draw[dashed] (0, 1/4) -- (1, 1/4);
		\draw[dashed] (0, 1/2) -- (1, 1/2);
		\draw[dashed] (0, 3/4) -- (1, 3/4);
		
		\node at (1/4,-0.1) {$\frac{1}{4}$};
		\node at (1/2,-0.1) {$\frac{1}{2}$};
		\node at (3/4,-0.1) {$\frac{3}{4}$};
		
		\node at (-0.1,1/4) {$\frac{1}{4}$};
		\node at (-0.1,1/2) {$\frac{1}{2}$};
		\node at (-0.1,3/4) {$\frac{3}{4}$};
		\draw[solid, fill] (1/2,0.5) circle (0.02);
		
		\end{tikzpicture}
		\caption{Graph of $f$ from Example~\ref{ex:2sin1x}.}
		\label{fig:spiral}
	\end{figure}
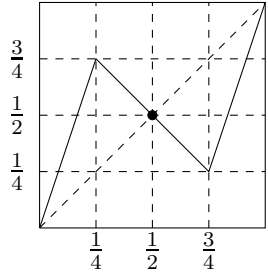
	
	Note that $X=\underleftarrow{\lim}\{I,f\}$ consists of two rays compactifying 
on an arc and therefore, for every $x\in X$, 
	there exists a planar embedding making $x$ accessible. However, the 
point $\frac{1}{2}$ is inside a zigzag of $f$. 
Figure~\ref{fig:squared} shows the graph of $f^2$. Note that the 
point $\frac{1}{2}$ is not inside a zigzag of $f^2$ and that gives an 
embedding of $X$ such that $(\frac{1}{2}, \frac{1}{2}, \ldots)$ is accessible. 
	
	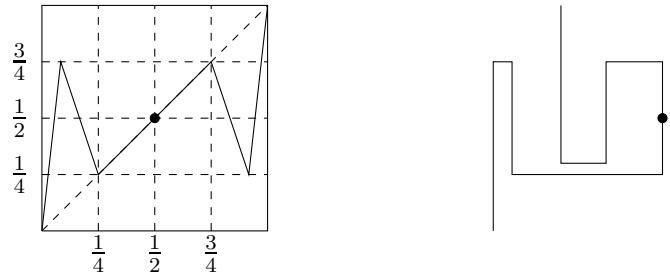
\begin{figure}[!ht]
		\centering
		\begin{tikzpicture}[scale=3]
		\draw[dashed] (0,0)--(1,1);
		\draw (0,0) -- (1/12, 3/4) -- (1/4,1/4) -- (3/4, 3/4) -- 
(11/12,1/4) -- (1,1);
		\draw (0,0) -- (0,1) -- (1,1) -- (1,0) -- (0,0);
		\draw[dashed] (1/4, 0) -- (1/4, 1);
		\draw[dashed] (1/2, 0) -- (1/2, 1);
		\draw[dashed] (3/4, 0) -- (3/4, 1);
		\draw[dashed] (0, 1/4) -- (1, 1/4);
		\draw[dashed] (0, 1/2) -- (1, 1/2);
		\draw[dashed] (0, 3/4) -- (1, 3/4);
		
		\node at (1/4,-0.1) {$\frac{1}{4}$};
		\node at (1/2,-0.1) {$\frac{1}{2}$};
		\node at (3/4,-0.1) {$\frac{3}{4}$};
		
		\node at (-0.1,1/4) {$\frac{1}{4}$};
		\node at (-0.1,1/2) {$\frac{1}{2}$};
		\node at (-0.1,3/4) {$\frac{3}{4}$};
		
		\draw[solid, fill] (1/2,0.5) circle (0.02);
		
		\draw 
(2,0)--(2,3/4)--(2+1/12,3/4)--(2+1/12,1/4)--(2+3/4,1/4)--(2+3/4,3/4)--(2+1/2,
3/4)--(2+1/2,0.3)--(2+0.3,0.3)--(2+0.3,1);
		\draw[solid, fill] (2.75,0.5) circle (0.02);
		\end{tikzpicture}
		\caption{Graph of $f^2$ from Example~\ref{ex:2sin1x}.}
		\label{fig:squared}
	\end{figure}
	Let  $x=(x_0, x_1, x_2, \ldots)\in X$ be such that $x_i\in[1/4,3/4]$ 
for all but finitely many $i\in\N_0$.
	Then, the embedding in Figure~\ref{fig:squared} will make $x$ 
accessible. 
	For other points $x=(x_0, x_1, x_2, \ldots)\in X$ there exists $N\in\N$ 
such that $x_i\in[0,1/4]$ 
	for each $i>N$ or $x_i\in[3/4,1]$ for each $i>N$ so the standard 
embedding makes them accessible. 
	In fact, the embedding in Figure~\ref{fig:squared} will make every 
$x\in X$ accessible. 
\end{example}

\begin{example}\label{ex:2knsater}
	Assume that $f$ is a piecewise linear map with $f(0)=0$, $f(1)=1$ and 
critical points $f(\frac{3}{8})=\frac{3}{4}$ and $f(\frac{5}{8})=\frac{1}{4}$ 
(see Figure~\ref{fig:doubleKnaster}).
	
	\begin{figure}[!ht]
		\centering
		\begin{tikzpicture}[scale=3]
		\draw[dashed] (0,0)--(1,1);
		\draw (0,0) -- (3/8, 3/4) -- (5/8, 1/4) -- (1,1);
		\draw (0,0) -- (0,1) -- (1,1) -- (1,0) -- (0,0);
		\draw[dashed] (1/4, 0) -- (1/4, 1);
		\draw[dashed] (1/2, 0) -- (1/2, 1);
		\draw[dashed] (3/4, 0) -- (3/4, 1);
		\draw[dashed] (0, 1/4) -- (1, 1/4);
		\draw[dashed] (0, 1/2) -- (1, 1/2);
		\draw[dashed] (0, 3/4) -- (1, 3/4);
		
		\node at (1/4,-0.1) {$\frac{1}{4}$};
		\node at (1/2,-0.1) {$\frac{1}{2}$};
		\node at (3/4,-0.1) {$\frac{3}{4}$};
		
		\node at (-0.1,1/4) {$\frac{1}{4}$};
		\node at (-0.1,1/2) {$\frac{1}{2}$};
		\node at (-0.1,3/4) {$\frac{3}{4}$};
		
		\draw[solid, fill] (0.5,0.5) circle (0.02);
		\end{tikzpicture}
		\begin{tikzpicture}[scale=3]
		\draw[dashed] (0,0)--(1,1);
		\draw (0,0) -- (3/16, 3/4) -- (1/4+1/16, 1/4) -- (3/8,1/2) -- 
(1/2-1/16,1/4) -- (1/2+1/16,3/4) -- (1/2+1/8,1/2) -- (3/4-1/16,3/4) -- 
(3/4,1/2) -- (3/4+1/16,1/4) -- (1,1);
		\draw (0,0) -- (0,1) -- (1,1) -- (1,0) -- (0,0);
		\draw[dashed] (1/4, 0) -- (1/4, 1);
		\draw[dashed] (1/2, 0) -- (1/2, 1);
		\draw[dashed] (3/4, 0) -- (3/4, 1);
		\draw[dashed] (0, 1/4) -- (1, 1/4);
		\draw[dashed] (0, 1/2) -- (1, 1/2);
		\draw[dashed] (0, 3/4) -- (1, 3/4);
		
		\node at (1/4,-0.1) {$\frac{1}{4}$};
		\node at (1/2,-0.1) {$\frac{1}{2}$};
		\node at (3/4,-0.1) {$\frac{3}{4}$};
		
		\node at (-0.1,1/4) {$\frac{1}{4}$};
		\node at (-0.1,1/2) {$\frac{1}{2}$};
		\node at (-0.1,3/4) {$\frac{3}{4}$};
		
		\draw[solid, fill] (0.5,0.5) circle (0.02);
		
		\end{tikzpicture}
		\caption{Graph of $f$ and $f^2$ in Example~\ref{ex:2knsater}.}
		\label{fig:doubleKnaster}
	\end{figure}
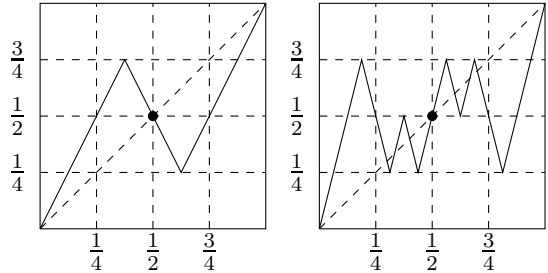
	
	Note that $X=\underleftarrow{\lim}\{I,f\}$ consists of two Knaster 
continua joined at their endpoints 
	together with two rays both converging to these two Knaster continua. 
Note that 
	$(\frac{1}{2}, \frac{1}{2}, \ldots)$ 
	can be embedded accessibly with the use of $f^2$, see 
Figure~\ref{fig:doubleKnaster}. 
	However, as opposed to the previous example, $X$ cannot be embedded 
such that every point is accessible (this follows already by a result of Mazurkiewicz \cite{Maz} which says that there exist nonaccessible points in any planar embedding of a nondegenerate indecomposable continuum).
 It is proven by Minc and Transue in 
\cite{MincTrans} that such an embedding of a chainable continuum exists if and 
only if it is {\em Suslinean}, \ie
	contains at most countably many mutually disjoint nondegenerate 
subcontinua.
\end{example}

\begin{example}[Nadler]\label{ex:Nadler}
Let $f: I\to I$ be as in Figure~\ref{fig:iter}. This is Nadler's 
candidate from \cite{Nadler} for a negative answer to Question~\ref{q:NaQu}. 
However, in what follows we show that every point can be made accessible via some planar 
embedding of $\underleftarrow{\lim}(I,f)$.
	
	Let $n\in\N$. If $J\subset I$ is a maximal interval such that $f^n|_J$ 
is increasing, then $J$ is not inside a zigzag of $f^n$, see \eg 
Figure~\ref{fig:iter}.
	
	\begin{figure}[!ht]
		\centering
		\begin{tikzpicture}[scale=3]
		\draw[dashed] (0,0)--(1,1);
		\draw (0,0) -- (1/5, 1/5) -- (2/5, 4/5) -- (3/5,1/5) -- (4/5, 
4/5) -- (1,1);
		\draw (0,0) -- (0,1) -- (1,1) -- (1,0) -- (0,0);
		\draw[dashed] (1/5, 0) -- (1/5, 1);
		\draw[dashed] (2/5, 0) -- (2/5, 1);
		\draw[dashed] (3/5, 0) -- (3/5, 1);
		\draw[dashed] (4/5, 0) -- (4/5, 1);
		\draw[dashed] (0, 1/5) -- (1, 1/5);
		\draw[dashed] (0, 2/5) -- (1, 2/5);
		\draw[dashed] (0, 3/5) -- (1, 3/5);
		\draw[dashed] (0, 4/5) -- (1, 4/5);
		
		\node at (1/5,-0.1) {$\frac{1}{5}$};
		\node at (2/5,-0.1) {$\frac{2}{5}$};
		\node at (3/5,-0.1) {$\frac{3}{5}$};
		\node at (4/5,-0.1) {$\frac{4}{5}$};
		
		\node at (-0.1,1/5) {$\frac{1}{5}$};
		\node at (-0.1,2/5) {$\frac{2}{5}$};
		\node at (-0.1,3/5) {$\frac{3}{5}$};
		\node at (-0.1,4/5) {$\frac{4}{5}$};
		
		\draw[thick] (1/5,1/5)--(2/5,4/5);
		\draw[thick] (3/5,1/5)--(4/5,4/5);
		
		\end{tikzpicture}
		\begin{tikzpicture}[scale=3]
		\draw[dashed] (0,0)--(1,1);
		\draw (0,0) -- (1/5, 1/5) -- (4/15, 4/5) -- (5/15,1/5) -- 
(6/15, 4/5) -- (7/15, 1/5) -- (8/15,4/5) -- (9/15, 1/5) -- (10/15, 4/5) -- 
(11/15,1/5) -- (12/15, 4/5) -- (1,1);
		\draw (0,0) -- (0,1) -- (1,1) -- (1,0) -- (0,0);
		\draw[dashed] (1/5, 0) -- (1/5, 1);
		\draw[dashed] (2/5, 0) -- (2/5, 1);
		\draw[dashed] (3/5, 0) -- (3/5, 1);
		\draw[dashed] (4/5, 0) -- (4/5, 1);
		\draw[dashed] (0, 1/5) -- (1, 1/5);
		\draw[dashed] (0, 2/5) -- (1, 2/5);
		\draw[dashed] (0, 3/5) -- (1, 3/5);
		\draw[dashed] (0, 4/5) -- (1, 4/5);
		
		\node at (1/5,-0.1) {$\frac{1}{5}$};
		\node at (2/5,-0.1) {$\frac{2}{5}$};
		\node at (3/5,-0.1) {$\frac{3}{5}$};
		\node at (4/5,-0.1) {$\frac{4}{5}$};
		
		\node at (-0.1,1/5) {$\frac{1}{5}$};
		\node at (-0.1,2/5) {$\frac{2}{5}$};
		\node at (-0.1,3/5) {$\frac{3}{5}$};
		\node at (-0.1,4/5) {$\frac{4}{5}$};
		
		\draw[thick] (1/5,1/5)--(4/15,4/5);
		\draw[thick] (5/15,1/5)--(6/15,4/5);
		\draw[thick] (7/15,1/5)--(8/15,4/5);
		\draw[thick] (9/15,1/5)--(10/15,4/5);
		\draw[thick] (11/15,1/5)--(12/15,4/5);
		\end{tikzpicture}
		\caption{Map $f$ and its second iterate. Bold lines are 
increasing branches of the restriction to $[ 1/5, 4/5]$. Note that they are not 
inside a zigzag of $f$ or $f^2$ respectively.}
		\label{fig:iter}
	\end{figure}
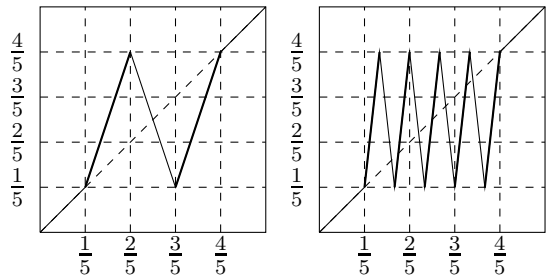

	We will code the orbit of points in the invariant interval $[ 1/5, 
4/5]$ in the following way. For $y\in [ 1/5, 4/5]$ let 
$i(y)=(y_n)_{n\in\N_0}\subset\{0, 1, 2\}^{\infty}$, where
	$$y_n=\begin{cases}
	0,& f^n(y)\in[ 1/5, 2/5],\\
	1,& f^n(y)\in[ 2/5, 3/5],\\
	2,& f^n(y)\in[ 3/5, 4/5].
	\end{cases}$$
	The definition is somewhat ambiguous with a problem occurring at points 
$2/5$ and $3/5$. Note, however, that $f^n(2/5)=4/5$ and $f^n(3/5)=1/5$ for all 
$n\in\N$. So every point in $[1/5, 4/5]$ will have a unique itinerary, except 
the preimages of $2/5$ (to which we can assign two itineraries $a_1\ldots 
a_n\frac 01 2222\ldots$) and preimages of $3/5$, (to which we can assign two 
itineraries $a_1\ldots a_n\frac 12 0000\ldots$), where $\frac 01$ means ``$0$ 
or $1$'' and $a_1, \ldots, a_n\in\{0,1,2\}$.
	
	Note that if $i(y)=1y_2\ldots y_n1$, where $y_i\in\{0, 2\}$ for every 
$i\in\{2, \ldots, n\}$, then $y$ is contained in an increasing branch of 
$f^{n+1}$. This holds also if $n=1$, \ie $y_2\ldots y_n=\emptyset$. Also, if 
$i(y)=0\ldots$ or $i(y)=2\ldots$, then $y$ is contained in an increasing branch 
of $f$. See Figure~\ref{fig:coding}.
	
	\begin{figure}[!ht]
		\centering
		\begin{tikzpicture}[scale=4]
		\draw[dashed] (0,0)--(1,1);
		\draw (0,0) -- (1/5, 1/5) -- (2/5, 4/5) -- (3/5,1/5) -- (4/5, 
4/5) -- (1,1);
		\draw (0,0) -- (0,1) -- (1,1) -- (1,0) -- (0,0);
		\draw[dashed] (1/5, 0) -- (1/5, 1);
		\draw[dashed] (2/5, 0) -- (2/5, 1);
		\draw[dashed] (3/5, 0) -- (3/5, 1);
		\draw[dashed] (4/5, 0) -- (4/5, 1);
		\draw[dashed] (0, 1/5) -- (1, 1/5);
		\draw[dashed] (0, 2/5) -- (1, 2/5);
		\draw[dashed] (0, 3/5) -- (1, 3/5);
		\draw[dashed] (0, 4/5) -- (1, 4/5);
		
		\node at (3/10,-0.1) {\small $0$};
		\node at (5/10,-0.1) {\small $1$};
		\node at (7/10,-0.1) {\small $2$};
		
		\draw[thick] (1/5,1/5)--(2/5,4/5);
		\draw[thick] (3/5,1/5)--(4/5,4/5);
		
		\end{tikzpicture}
		\begin{tikzpicture}[scale=4]
		\draw[dashed] (0,0)--(1,1);
		\draw (0,0) -- (1/5, 1/5) -- (4/15, 4/5) -- (5/15,1/5) -- 
(6/15, 4/5) -- (7/15, 1/5) -- (8/15,4/5) -- (9/15, 1/5) -- (10/15, 4/5) -- 
(11/15,1/5) -- (12/15, 4/5) -- (1,1);
		\draw (0,0) -- (0,1) -- (1,1) -- (1,0) -- (0,0);
		\draw[dashed] (1/5, 0) -- (1/5, 1);
		\draw[dashed] (2/5, 0) -- (2/5, 1);
		\draw[dashed] (3/5, 0) -- (3/5, 1);
		\draw[dashed] (4/5, 0) -- (4/5, 1);
		\draw[dashed] (0, 1/5) -- (1, 1/5);
		\draw[dashed] (0, 2/5) -- (1, 2/5);
		\draw[dashed] (0, 3/5) -- (1, 3/5);
		\draw[dashed] (0, 4/5) -- (1, 4/5);
		
		\node at (7/30,-0.1) {\rotatebox[]{90}{\tiny $00$}};
		\node at (9/30,-0.1) {\rotatebox[]{90}{\tiny $01$}};
		\node at (11/30,-0.1) {\rotatebox[]{90}{\tiny $02$}};
		\node at (13/30,-0.1) {\rotatebox[]{90}{\tiny $12$}};
		\node at (15/30,-0.1) {\rotatebox[]{90}{\tiny $11$}};
		\node at (17/30,-0.1) {\rotatebox[]{90}{\tiny $10$}};
		\node at (19/30,-0.1) {\rotatebox[]{90}{\tiny $20$}};
		\node at (21/30,-0.1) {\rotatebox[]{90}{\tiny $21$}};
		\node at (23/30,-0.1) {\rotatebox[]{90}{\tiny $22$}};
		
		\draw[thick] (1/5,1/5)--(4/15,4/5);
		\draw[thick] (5/15,1/5)--(6/15,4/5);
		\draw[thick] (7/15,1/5)--(8/15,4/5);
		\draw[thick] (9/15,1/5)--(10/15,4/5);
		\draw[thick] (11/15,1/5)--(12/15,4/5);
		\end{tikzpicture}
		\caption{Map $f$ and its iterate with symbolic coding of 
points. Note that points with itinerary $0\ldots$ or $2\ldots$ are contained in 
an increasing branch of $f$ and points with itineraries $11\ldots$ are contained 
in an increasing branch of $f^2$.}
		\label{fig:coding}
	\end{figure}
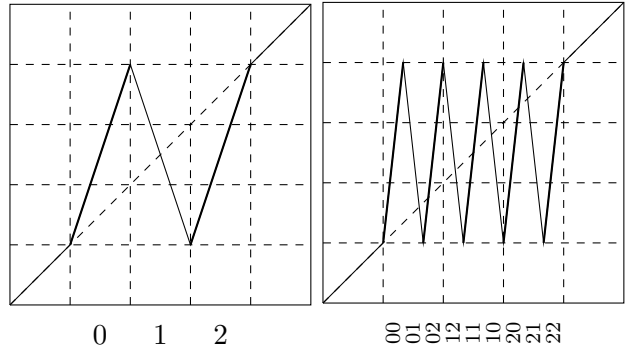
We extend the symbolic coding to $X$. For $x=(x_0, x_1, x_2, \ldots)\in X$ with itinerary $i(x)$
let $(y_k)_{k\in\Z}$ be defined by
$$
y_k=\begin{cases}
        i(x)_k,& k \geq 0, \text{ and for } k < 0,\\
	0,& x_{-k}\in[ 1/5, 2/5],\\
	1,& x_{-k}\in[ 2/5, 3/5],\\
	2,& x_{-k}\in[ 3/5, 4/5].
	\end{cases}
$$ 
Again, the assignment is injective everywhere 
except at preimages of critical points 
	$2/5$ or $3/5$.
	
Now fix $x=(x_0, x_1, x_2, \ldots)\in X$ with its backward itinerary 
$\ovl x=\ldots y_{-2}y_{-1}y_0$
	(assume the itinerary is unique, otherwise choose one of the two 
possible backward itineraries). 
	Assume first that $y_k\in\{0, 2\}$ for every $k\leq 0$. Then, for every 
$k\in\N_0$ it holds 
	that $i(x_k)=0\ldots$ or $i(x_k)=2\ldots$ so $x_k$ is in an increasing 
branch of $f$ and thus not inside a zigzag of $f$. By Theorem~\ref{thm:zigzag} it follows 
that there is an embedding 
	making $x$ accessible. Similarly, if there exists $n\in\N$ such that 
$y_k\neq 1$ for $k<-n$, we use that 
	$X$ is homeomorphic to $\underleftarrow{\lim}\{I, f_j\}$ where 
$f_1=f^n$, $f_j=f$ for $j\geq 2$.
	
	Assume that $\ovl x=\ldots 
1(\frac02)^{n_3}1(\frac02)^{n_2}1(\frac02)^{n_1}$ where $\frac02$ 
	means ``$0$ or $2$'' and $n_i\geq 0$ for $i\in\N$. We will assume that 
$n_1>0$; 
	the general case follows similarly. Note that 
$i(x_{n_1-1})=(\frac02)^{n_1}\ldots$ and so it is 
	contained in an increasing branch of $f^{n_1-1}$. 
	Note further that 
$i(x_{n_1+1+n_2})=1(\frac02)^{n_2}1(\frac02)^{n_1}\ldots$ and so it is 
contained 
	in an increasing branch of $f^{n_2+2}$. Also 
$f^{n_2+2}(x_{n_1+1+n_2})=x_{n_1-1}$. 
	Further we note that 
$i(x_{n_1+1+n_2+1+n_3-1})=(\frac02)^{n_3}1(\frac02)^{n_2}1(\frac02)^{n_1}$ and 
	so it is contained in an increasing branch of $f^{n_3}$. 
	Furthermore,  $f^{n_3}(x_{n_1+1+\\n_2+1+n_3-1})=x_{n_1+1+n_2}$.
	
In this way, we see that for every even $k\geq 4$ it holds that 
	
$$i(x_{n_1+1+n_2+1+\ldots+1+n_k})=1\left(\frac02\right)^{n_k}
1\ldots1\left(\frac02\right)^{n_1}\ldots$$
	and so it is contained in an increasing branch of $f^{n_k+2}$. Also, 
$f^{n_k+2}(x_{n_1+1+n_2+1+\ldots+1+n_k})=x_{n_1+1+n_2+1+\ldots+1+n_{k-1}-1}$. 
Similarly, 
$$i(x_{n_1+1+n_2+1+\ldots+1+n_k+1+n_{k+1}-1})=\left(\frac02\right)^{n_{k+1}}
1\ldots1\left(\frac02\right)^{n_1}\ldots$$ so 
$x_{n_1+1+n_2+1+\ldots+1+n_k+1+n_{k+1}-1}$ is in an increasing branch of 
$f^{n_{k+1}}$. Note also that 
$f^{n_{k+1}}(x_{n_1+1+n_2+1+\ldots+1+n_k+1+n_{k+1}-1})=x_{
n_1+1+n_2+1+\ldots+1+n_k}$.
	
	So we have the following sequence
$$
	\ldots \overset{f^{n_5}}{\longrightarrow} x_{n_1+1+\ldots 
+1+n_4}\overset{f^{n_4+2}}{\longrightarrow} 
x_{n_1+1+n_2+1+n_3-1}\overset{f^{n_3}}{\longrightarrow} 
x_{n_1+1+n_2}\overset{f^{n_2+2}}{\longrightarrow}x_{n_1-1}\overset{f^{n_1-1}}{\longrightarrow}x_0,
$$
	where the chosen points in the sequence are not contained in zigzags of 
the corresponding bonding maps. 
	Let 
	$$f_i=\begin{cases}
	f^{n_1-1},& i=1,\\
	f^{n_i+2},& i \text{ even,}\\
	f^{n_i},& i>1 \text{ odd}.
	\end{cases}$$
	Then, $\underleftarrow{\lim}\{I, f_i\}$ is homeomorphic to $X$ and by 
Theorem~\ref{thm:zigzag} it 
	can be embedded in the plane such that every 
$x\in\underleftarrow{\lim}\{I, f_i\}$ is accessible.
\end{example}

\section{Thin embeddings}\label{sec:thin}

We have proven that if a chainable continuum $X$ has an inverse limit 
representation such that $x\in X$ 
is not contained in zigzags of bonding maps, then there is a planar embedding 
of $X$ making $x$ accessible. 
Note that the converse is not true. The pseudo-arc is a counter-example, because it is homogeneous, so 
each of its points can be embedded accessibly. 
However, the crookedness of the bonding maps producing the 
pseudo-arc implies the occurrence of 
zigzags in every representation. 
Since the pseudo-arc is hereditarily indecomposable, no point is contained in an arc. 
To the contrary, in Minc's continuum $X_M$ (see Figure~\ref{fig:Minc}), 
every point is contained in an arc of length at least $\frac{1}{3}$. 

\begin{figure}[!ht]
	\centering
	\begin{tikzpicture}[scale=3]
	
	\draw (0,0) -- (1/3, 1) -- (5/12, 1/3) -- (7/12,2/3) -- (2/3, 0) -- 
(1,1);
	\draw (0,0) -- (0,1) -- (1,1) -- (1,0) -- (0,0);
	\draw[dashed] (1/3, 0) -- (1/3, 1);
	\draw[dashed] (1/2, 0) -- (1/2, 1);
	\draw[dashed] (2/3, 0) -- (2/3, 1);
	\draw[dashed] (0, 1/3) -- (1, 1/3);
	\draw[dashed] (0, 1/2) -- (1, 1/2);
	\draw[dashed] (0, 2/3) -- (1, 2/3);
	
	\node at (1/3,-0.1) {\small $\frac{1}{3}$};
	\node at (1/2,-0.1) {\small $\frac{1}{2}$};
	\node at (2/3,-0.1) {\small $\frac{2}{3}$};
	
	\node at (-0.1,1/3) {\small $\frac{1}{3}$};
	\node at (-0.1,1/2) {\small $\frac{1}{2}$};
	\node at (-0.1,2/3) {\small $\frac{2}{3}$};
	
	\draw[solid, fill] (1/2,0.5) circle (0.015);
	\node at (0.9,0.9) {\small $f$};
	\node at (0.53,0.45) {\scriptsize $p$};
	\end{tikzpicture}
	\begin{tikzpicture}[scale=3]
	
	\draw (0,0) -- (1/9, 1) -- (5/36, 1/3) -- (7/36,2/3) -- (2/9, 0) -- 
(1/3,1)--(3/8,0)--(3/8+1/96,2/3)--(10/24-1/96,1/3)--(10/24,1)--(5/12+1/24,
1/3)--(1/2,1/2);
	\begin{scope}[yscale=-1,xscale=-1,xshift=-28.5, yshift=-28.5]
	\draw (0,0) -- (1/9, 1) -- (5/36, 1/3) -- (7/36,2/3) -- (2/9, 0) -- 
(1/3,1)--(3/8,0)--(3/8+1/96,2/3)--(10/24-1/96,1/3)--(10/24,1)--(5/12+1/24,
1/3)--(1/2,1/2);
	\end{scope}
	\draw (0,0) -- (0,1) -- (1,1) -- (1,0) -- (0,0);
	\draw[dashed] (1/3, 0) -- (1/3, 1);
	\draw[dashed] (1/2, 0) -- (1/2, 1);
	\draw[dashed] (2/3, 0) -- (2/3, 1);
	\draw[dashed] (0, 1/3) -- (1, 1/3);
	\draw[dashed] (0, 1/2) -- (1, 1/2);
	\draw[dashed] (0, 2/3) -- (1, 2/3);
	
	\node at (1/3,-0.1) {\small $\frac{1}{3}$};
	\node at (1/2,-0.1) {\small $\frac{1}{2}$};
	\node at (2/3,-0.1) {\small $\frac{2}{3}$};
	
	\node at (-0.1,1/3) {\small $\frac{1}{3}$};
	\node at (-0.1,1/2) {\small $\frac{1}{2}$};
	\node at (-0.1,2/3) {\small $\frac{2}{3}$};
	
	\draw[solid, fill] (1/2,0.5) circle (0.015);
	\node at (0.53,0.45) {\scriptsize $p$};
	\node at (0.9,0.9) {\small $f^2$};
	
	\end{tikzpicture}
	\vspace{-10pt}
	\caption{Minc's map and its second iteration 
	(the example was given at the Spring Topology and Dynamical Systems Conference 2001 
	in a talk by Minc entitled ''On embeddings of chainable continua into the plane'').}
	\label{fig:Minc}
\end{figure}
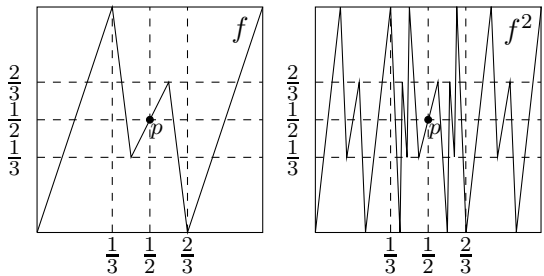

In the next two definitions we introduce the notion of thin embedding, used 
under this name in \eg \cite{DT}. In \cite{ABC-q} the notion of thin embedding 
was referred to as {\em $C$-embedding}. 

\begin{definition}
	Let $Y\subset\R^2$ be a continuum.  We say that $Y$ is {\em thin chainable} if there exists a 
	sequence $({\chain_n})_{n\in\N}$ of chains in $\R^2$ such that 
$Y=\cap_{n\in\N}\chain^*_n$, 
	where $\chain_{n+1}\prec\chain_n$ for every $n\in\N$, 
$\textrm{mesh}(\chain_n)\to 0$ as 
	$n\to\infty$, and the links of $\chain_n$ are connected sets in $\R^2$ 
(note that links are open in the 
	topology of $\R^2$). 
\end{definition}

\begin{definition}\label{def:thin}
	Let $X$ be a chainable continuum. We say that an embedding $\phi: 
X\to\R^2$ is a {\em thin embedding} 
	if $\phi(X)$ is thin chainable. Otherwise $\phi$ is called a {\em thick 
embedding}.
\end{definition}

Note that in \cite{Bing} Bing shows that every chainable continuum has a thin embedding in the plane.

\begin{question}[Minc, 2001]
	Is there a planar embedding of Minc's chainable continuum $X_M$ which makes $p$ accessible? Or as a special case, is there a 
\emph{thin embedding} of $X_M$ which makes $p$ accessible? 
\end{question}

\begin{example}[Bing, \cite{Bing}]  An {\em Elsa continuum} (see \cite{Na-Elsa}) is a continuum consisting of a ray 
compactifying on an arc (in \cite{BrBr} this was called an arc+ray continuum). 
An example of a thick embedding of an Elsa continuum was constructed by Bing (see Figure~\ref{fig:Bing}). 
\end{example}

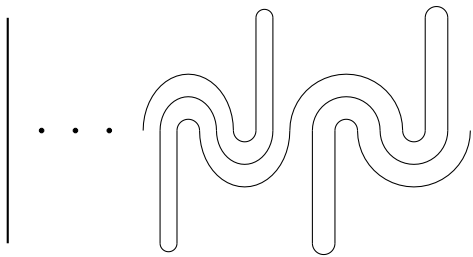
\begin{figure}[!ht]
	\centering
	\begin{tikzpicture}[scale=1.5]
	\draw[thick] (0,-1)--(0,1);
	
	\draw[domain=0:180] plot ({3+0.5*cos(\x)}, {0.5*sin(\x)});
	\draw[domain=0:180] plot ({3+0.3*cos(\x)}, {0.3*sin(\x)});
	\draw[domain=0:180] plot ({3+0.1*cos(\x)}, {0.1*sin(\x)});
	
	\draw[domain=180:360] plot ({2.1+0.4*cos(\x)}, {0.5*sin(\x)});
	\draw[domain=180:360] plot ({2.1+0.25*cos(\x)}, {0.3*sin(\x)});
	\draw[domain=180:360] plot ({2.1+0.1*cos(\x)}, {0.1*sin(\x)});
	\draw[domain=0:180] plot ({1.6+0.1*cos(\x)}, {0.1*sin(\x)});
	\draw[domain=0:180] plot ({1.6+0.25*cos(\x)}, {0.3*sin(\x)});
	\draw[domain=0:180] plot ({1.6+0.4*cos(\x)}, {0.5*sin(\x)});
	\draw (2.2,0)--(2.2,1);
	\draw (2.35,0)--(2.35,1);
	\draw[domain=0:180] plot ({2.275+0.075*cos(\x)}, {1+0.075*sin(\x)});
	\draw (1.5,0)--(1.5,-1);
	\draw (1.35,0)--(1.35,-1);
	\draw[domain=180:360] plot ({1.425+0.075*cos(\x)}, {-1+0.075*sin(\x)});
	
	\draw[domain=180:360] plot ({3.6+0.1*cos(\x)}, {0.1*sin(\x)});
	\draw[domain=180:360] plot ({3.6+0.3*cos(\x)}, {0.3*sin(\x)});
	\draw[domain=180:360] plot ({3.6+0.5*cos(\x)}, {0.5*sin(\x)});
	\draw (3.7,0)--(3.7,1);
	\draw (3.9,0)--(3.9,1);
	\draw[domain=0:180] plot ({3.8+0.1*cos(\x)}, {1+0.1*sin(\x)});
	\draw (2.9,0)--(2.9,-1);
	\draw (2.7,0)--(2.7,-1);
	\draw[domain=180:360] plot ({2.8+0.1*cos(\x)}, {-1+0.1*sin(\x)});
	\draw[solid, fill] (0.3,0) circle (0.02);
	\draw[solid, fill] (0.6,0) circle (0.02);
	\draw[solid, fill] (0.9,0) circle (0.02);
	\end{tikzpicture}
	\caption{Bing's example from \cite{Bing}.}
	\label{fig:Bing}
\end{figure}

An example of a thick embedding of the $3$-Knaster continuum was given by 
D\k{e}bski and Tymchatyn in \cite{DT}. 
An arc has a unique planar embedding (up to equivalence), so all of its planar embeddings are thin.
Therefore, it is natural to ask the following question.

\noindent
\begin{question} [Question~1 in \cite{ABC-q}] Which chainable continua have a thick 
embedding in the plane?
\end{question}

\begin{definition}
Given a chainable continuum $X$, let $\E_C(X)$ denote the 
set of all planar embeddings of $X$ obtained by performing admissible 
permutations of $G_{f_i}$ for every representation $X$ as 
$\underleftarrow{\lim}\{I, f_i\}$.
\end{definition}

The next theorem says that the class of all planar embeddings of chainable continuum $X$ obtained by 
performing admissible permutations of graphs $G_{f_i}$ 
is the class of all thin planar embeddings of $X$ up to the equivalence relation between embeddings. 

\begin{theorem}\label{thm:allemb}
	Let $X$ be a chainable continuum and $\phi: X\to\R^2$ a thin embedding 
of $X$. 
	Then there exists an embedding $\psi\in\E_C(X)$ which is equivalent to $\phi$.
\end{theorem}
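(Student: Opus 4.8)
The plan is to start from the given thin embedding $\phi$ and extract from it the combinatorial data — a sequence of chains with connected links — that will allow us to realize $\phi(X)$ (up to equivalence) as a nested intersection of tubular chains built from permuted graphs. Since $\phi$ is thin, by definition there is a sequence of planar chains $\chain_1 \succ \chain_2 \succ \cdots$ with connected links, $\mesh(\chain_n) \to 0$, and $\phi(X) = \bigcap_n \chain_n^*$. First I would show that, after passing to a subsequence and refining, we may assume each $\chain_n$ is \emph{nice} (links are open discs): a connected open subset of the plane with small diameter can be thickened/retracted to a disc neighborhood of a spanning tree of its nerve structure, and one checks that the patterns are preserved; a small perturbation keeps the intersection homeomorphic to $\phi(X)$ by Lemma~\ref{lem:patterns}. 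The point here is that a chain with connected links in the plane, once its links are small, is "essentially" a thin tube along a polygonal arc — the connectedness is what forbids the pathological non-connected links appearing in the natural chains $\chain_n = \pi_n^{-1}(C_n)$ of Section~\ref{sec:chains}.

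Next I would identify, for each $n$, an interval map $g_n \colon I \to I$ and a chain cover $C_n$ of $I$ so that $\chain_{n+1}$, viewed inside a stretched copy of $\chain_n$, follows a permuted graph $p_n^{C_n}(G_{g_n})$. Concretely: stretch $\chain_n$ to a horizontal chain via Remark~\ref{rem:stretch}; then $\chain_{n+1}$ sits inside it as a tube along some polygonal arc; reading off which links of $\chain_n$ that arc passes through, in order, gives a pattern, and this pattern is realized by the flattened graph of a piecewise linear surjection $g_n$ together with an admissible $C_n$-permutation $p_n$ — admissibility is forced because $\chain_{n+1}$ is an honest chain (its links meet only consecutively), which is exactly the geometric content of Definition~\ref{def:flat}. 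Composing these, $X$ is the inverse limit $\underleftarrow\lim\{I, g_n\}$ (the patterns $Pat(\chain_{n+1},\chain_n)$ coincide with $Pat(\pi_{n+1}^{-1}(C_{n+1}), \pi_n^{-1}(C_n))$ for this new representation, so by Lemma~\ref{lem:patterns} the resulting continuum is $X$), and the embedding $\psi$ obtained by running the construction of Section~\ref{sec:emb} with the maps $g_n$ and permutations $p_n$ lies in $\E_C(X)$ by definition of that class.

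Finally I would argue that $\psi$ is equivalent to $\phi$. Both $\psi(X)$ and $\phi(X)$ are realized as $\bigcap_n \D_n^*$ and $\bigcap_n \chain_n^*$ respectively, where $\D_n$ and $\chain_n$ have the same patterns and the same nerves $p_n^{C_n}(G_{g_n})$ up to ambient homeomorphism — indeed, by construction $\D_n$ is the tubular chain with that nerve, and $\chain_n$ was shown above to be a tubular chain with (a homeomorphic copy of) that same nerve. One then builds the ambient homeomorphism $h$ of $\R^2$ as a limit (or an inverse limit) of homeomorphisms $h_n$ matching $\chain_n$ to $\D_n$ link by link, using that at each stage the "next" chain sits inside the "current" one in the combinatorially prescribed way; uniform control comes from $\mesh \to 0$. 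This gives $h(\phi(X)) = \psi(X)$, i.e.\ equivalence (though not necessarily strong equivalence, since we do not control that $h$ restricts to $\psi\circ\phi^{-1}$).

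I expect the main obstacle to be the first step: turning an \emph{arbitrary} sequence of chains with connected links into a sequence of \emph{nice tubular} chains following permuted piecewise-linear graphs, while preserving the homeomorphism type of the intersection and the equivalence class of the embedding. Connected planar open sets can be wild, and one must be careful that the retraction onto a tubular neighborhood of a polygonal core (a) can be done ambiently, (b) respects the nesting $\chain_{n+1} \prec \chain_n$ simultaneously for all $n$, and (c) does not change which points of $X$ end up where. Handling the endpoint link conventions and the "top of the graph" bookkeeping from Section~\ref{sec:stretching} coherently across infinitely many levels is the technical heart of the argument.
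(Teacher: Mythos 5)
Your overall architecture matches the paper's: read off patterns from the thin chain sequence, realize them by admissible permutations of flattened graphs, and straighten the chains level by level with ambient homeomorphisms (piecewise linear Schoenflies) whose composition converges uniformly to a homeomorphism $G$ of $\R^2$ with $G\circ\phi\in\E_C(X)$. The final limit argument and the identification of patterns with permuted graphs are essentially as in the paper.

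However, the step you yourself flag as the main obstacle is a genuine gap, and your proposed fix does not work as stated. You cannot ``retract'' a connected open planar set onto a disc neighborhood of a spanning tree: a connected link may fail to be simply connected, and removing the holes means \emph{adding} points to the link, which can swallow other links of the same or finer chains and destroy the chain property. Worse, even after each individual link is made simply connected, the union $\ell_i\cup\ell_{i+1}$ of two \emph{consecutive} links can enclose bounded complementary components (two interlocking C-shapes), and $\chain_n^*$ must be simply connected before any Schoenflies-type straightening to a tubular neighborhood of an arc is possible. The paper's proof spends most of its effort exactly here: it fills holes inside links (merging any links swallowed in the process), then pairs consecutive links $\tilde\ell_i=\ell_{2i-1}\cup U_{2i-1}\cup\ell_{2i}$ and proves a structural claim --- any link meeting a hole between $\tilde\ell_i$ and $\tilde\ell_{i+1}$ lies entirely in $\tilde\ell_i\cup\tilde\ell_{i+1}\cup H$ --- which is what makes the second round of hole-filling produce a chain again, at the cost of at most quadrupling the mesh. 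Without this (or an equivalent) reduction to simply connected $\chain_n^*$, the claim that $\chain_{n+1}$ ``sits inside a stretched copy of $\chain_n$ as a tube along a polygonal arc'' is unjustified, and the rest of the argument has nothing to stand on. So the proposal is the right plan, but the decisive combinatorial-topological lemma is missing rather than merely deferred.
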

\begin{proof}
Recall that $\chain^*_n = \bigcup_{\ell \in \chain_n} \ell$.
Let $\phi(X)=\cap_{n\in\N_0}\chain^*_n$, where the links of 
$\chain_n$ are open, connected sets in $\R^2$. Furthermore,  $\chain_{n+1}\prec\chain_n$ for every $n\in\N_0$. Note that we assume that links of $\chain_n$ have a polygonal curve for a boundary, using a brick decomposition of the plane. 

We argue that we can also assume that every $\chain^*_n$ is simply connected.
This goes in a few steps. In every step we first state the claim we can obtain and then argue in the rest of the step how to obtain it. 
\begin{enumerate}
 \item Without loss of generality we can assume that the separate
links of $\chain_n$ are simply connected, by filling in the holes. 
That is, if a link $\ell\in\chain_n$ 
is such that $\R^2\setminus\ell$ separates the plane, instead of $\ell$, we take $\ell \cup \bigcup_{i}V_i$, 
where the $V_i$ are the bounded components of $\R^2\setminus\ell$. 
Filling in the holes thus merges all the links contained in $\ell \cup \bigcup_i V_i$ into a single link.
This does not change the mesh nor the pattern of the chain. 

\item We can assume that for every hole $H$ between the links $\ell_i$ and $\ell_{i+1}$ 
(\ie a connected bounded component of $\R^2\setminus (\ell_i\cup\ell_{i+1})$) 
it holds that if $H \cap \ell_j \neq \emptyset$ for some $j$, then 
$\ell_j \subset \ell_i \cup \ell_{i+1} \cup H$. Denote by $U_i$ the union of bounded component of 
$\R^2\setminus (\ell_i\cup\ell_{i+1})$ and note that 
$\{\ell_1\cup U_1\cup\ell_2, \ell_3\cup U_3\cup\ell_4, \ldots, \ell_{k(n)-1}\cup U_{k(n)-1}\cup\ell_{k(n)}\}$ 
is again a chain. (It can happen that the first or last few links are merged into one link. 
Also, we can assume that $n$ is even by merging the last two links if necessary.)
Denote for simplicity $\tilde{\ell}_i=\ell_{2i-1}\cup U_{2i-1}\cup\ell_{2i}$ for every $i\in \{1,2,\ldots, n/2\}$. 
We claim that if $\tilde{\ell}_j\cap H\neq\emptyset$ for some hole between $\tilde{\ell}_i$ 
and $\tilde{\ell}_{i+1}$, then $\tilde{\ell}_j\subset \tilde{\ell}_i\cup\tilde{\ell}_{i+1}\cup H$. 
Assume the contrary, and take without loss of generality that $j>i+1$. 
Then necessarily $j=i+2$ and $\tilde{\ell}_{i+2}$ separates $\tilde{\ell}_{i+1}$ so that at least 
two components of $\tilde \ell_{i+1} \setminus \tilde \ell_{i+2}$ intersect $\tilde{\ell}_i$. That is, $\tilde{\ell}_{i+2}$ separates $\ell_{2i+1}$. 
But this is a contradiction since $\tilde{\ell}_{i+2}=\ell_{2i+3}\cup U_{2i+3}\cup\ell_{2i+4}$ 
can only intersect $\ell_{2i+1}$ if $\ell_{2i+1}\subset U_{2i+3}$ in which case $\tilde{\ell}_{i+2}$ 
does not separate $\ell_{2i+1}$.

\item If there is a hole between links $\tilde \ell_i$ and $\tilde \ell_{i+1}$, then 
we can fill it in a similar way as in Step (1). That is, letting $\tilde U_i$ be the union 
of bounded components of
$\R^2 \setminus (\tilde \ell_i \cup \tilde \ell_{i+1})$, the links of the modified chain are 
$\{\tilde \ell_1, \ldots, \tilde \ell_{i - 1}, \tilde \ell_i\cup \tilde U_i\cup\tilde \ell_{i+1}, 
\tilde \ell_{i+2}, 
\ldots, \tilde \ell_{k(n)/2}\}$. (It can happen that $\tilde \ell_j\subset \tilde U_i$ for all $j>N\geq i+1$ or 
$j<N\leq i$, but then we merge all these links.)
We do this for each $i \in \{ 1, \dots, k(n)/2 \}$ where there is a hole between $\tilde \ell_i$ 
and $\tilde \ell_{i+1}$, so not just the odd values of $i$ as in Step (2).
Due to the claim in Step (2), the result is again a chain.
These modified chains can have a larger mesh (up to four times the original mesh), but still 
satisfy $\chain_{n+1}\prec\chain_n$ for every $n\in\N_0$ and $\mesh(\chain_n)\to 0$ as $n\to\infty$.
\end{enumerate}

In the rest of the proof we construct homeomorphisms $F_j$, $0 \leq j \leq n\in\N_0$
and $G_n:=F_n\circ\ldots \circ F_1\circ F_0$, which straighten the chains $\chain_n$ 
to horizontal chains. The existence of such homeomorphisms follows 
from the generalization of the piecewise linear Schoenflies' theorem given in 
\eg \cite[Section~3]{Moise}. 
Take a homeomorphism $F_0:\R^2\to\R^2$ which maps $\chain_0$ to a 
horizontal chain. 
Then $F_0(\chain_1)\prec F_0(\chain_0)$ 
and there is a homeomorphism $F_1:\R^2\to\R^2$ which is the identity on 
$\R^2\setminus F_0(\chain_0)^*$ 
(recall that $\chain_n^*$ denotes the union of links of $\chain_n$), and 
which maps $F_0(\chain_1)^*$ to a tubular neighborhood of some permuted 
flattened graph with $\mesh(F_1(F_0(\chain_1)))<\mesh(\chain_1)$.\\
Note that $G_n(\chain_{n+1})\prec G_n(\chain_{n})$ and there is a homeomorphism 
$F_{n+1}:\R^2\to\R^2$ which is the identity on $\R^2\setminus G_n(\chain_{n})^*$ 
and which maps $G_n(\chain_{n+1})^*$ to a tubular neighborhood of some 
flattened permuted graph with 
$\mesh(F_{n+1}(G_n(\chain_{n+1})))<\mesh(\chain_{n+1})$.\\
Note that the sequence $(G_n)_{n\in\N_0}$ is uniformly Cauchy and 
$G := \lim_{n\to\infty}G_n$ is well-defined. By construction, $G:\R^2\to\R^2$ is a 
homeomorphism and $G\circ\phi\in\E_C(X)$.
\end{proof}

\noindent
\begin{question}[Question~2 in \cite{ABC-q}] Is there a chainable continuum $X$ 
and a thick embedding $\psi$ of $X$ such that the set of accessible points of 
$\psi(X)$ is different from the set of accessible points of $\phi(X)$ for any 
thin embedding $\phi$ of $X$?
\end{question}

\section{Uncountably many nonequivalent embeddings}\label{sec:nonequivalent}

In this section we construct, for every chainable continuum containing a nondegenerate 
indecomposable subcontinuum, uncountably many embeddings which 
are pairwise not strongly equivalent. Recall that $\phi, 
\psi\colon X\to \R^2$ are strongly equivalent if $\phi\circ\psi^{-1}$ can be extended to 
a homeomorphism of $\R^2$.

The idea of the construction is to find uncountably many composants in some indecomposable planar continuum which can 
be embedded accessibly in more than a point. The conclusion then follows easily 
with the use of the following theorem.

\begin{theorem}[Mazurkiewicz \cite{Maz}]\label{thm:Maz}
Let $X\subset\R^2$ be an indecomposable  planar continuum. There are at 
most countably many composants of $X$ which are accessible in at least two 
points.
\end{theorem}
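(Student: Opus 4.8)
The plan is to attach to every composant $C$ of $X$ that is accessible in at least two points a set $W_C$ which is nonempty, relatively open in $X$, and satisfies $W_C\subseteq C$. Since distinct composants are disjoint, the $W_C$ are then pairwise disjoint nonempty open subsets of the separable space $X$, so only countably many of them exist; as $C\mapsto W_C$ is injective, the theorem follows. I would work throughout in the sphere $S^2=\R^2\cup\{\infty\}$, and use two classical facts without further comment: that every proper subcontinuum of an indecomposable continuum is nowhere dense in it (so $X\setminus K$ is dense in $X$ whenever $K\subsetneq X$ is a subcontinuum), and that a subset $\alpha\cup K\subseteq S^2$, with $\alpha$ an arc and $K$ a continuum meeting $\alpha$ in exactly the two endpoints of $\alpha$, separates $S^2$ --- which follows from Alexander duality (one has $\check H^1(\alpha\cup K)\neq 0$) or from unicoherence of $S^2$. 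I will also use repeatedly that a proper subcontinuum of $X$ meeting a composant $C$ is contained in $C$; this is where indecomposability enters, since the union of such a subcontinuum with a proper subcontinuum through a base point of $C$ is again proper, hence contained in $C$.

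Now fix such a composant $C$ and two distinct accessible points $a,b\in C$, with accessibility arcs ending at $a$ and at $b$ respectively and otherwise contained in $S^2\setminus X$. Joining their free endpoints by an arc inside a complementary domain of $X$, followed by a routine general-position adjustment, produces an arc $\alpha_C$ from $a$ to $b$ with $\alpha_C\cap X=\{a,b\}$ and $\alpha_C\setminus\{a,b\}\subseteq S^2\setminus X$. As $a$ and $b$ lie in the same composant, pick a proper subcontinuum $K_C\subsetneq X$ with $a,b\in K_C$; then $K_C\subseteq C$. Set $L_C:=\alpha_C\cup K_C$, which separates $S^2$ by the fact above. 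The set $X\setminus K_C$ is nonempty and disjoint from $L_C$ --- it misses $K_C$ by construction and misses $\alpha_C$ because $X\cap\alpha_C=\{a,b\}\subseteq K_C$ --- so it meets some component $D_C$ of $S^2\setminus L_C$. I would then define $W_C:=D_C\cap X$, which is nonempty and relatively open in $X$; it only remains to prove $W_C\subseteq C$.

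For this I would argue that $\overline{D_C}\cap X$ is a proper subcontinuum of $X$. It contains $K_C$ (once $D_C$ is chosen with $K_C\subseteq\partial D_C$), hence meets $C$, hence is contained in $C$, giving $W_C\subseteq\overline{D_C}\cap X\subseteq C$. Connectedness of $\overline{D_C}\cap X$ I would extract from unicoherence of $S^2$: taking $D_C$ to be a complementary domain of $L_C$ whose frontier is all of $L_C$, the continua $\overline{D_C}$ and $\overline{S^2\setminus D_C}$ have union $S^2$ and intersection $L_C$, and a careful bookkeeping transfers connectedness to the trace on $X$. Properness I would obtain by arranging --- shrinking $K_C$ to a thinner proper subcontinuum of $X$ through $a$ and $b$ if necessary, and passing to a limit --- that some component of $S^2\setminus L_C$ distinct from $D_C$ also meets $X$, so that $X\not\subseteq\overline{D_C}$.

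The content of the proof, and the step I expect to be the main obstacle, is precisely this last paragraph. Two difficulties must be resolved. First, when $K_C$ is wild, $S^2\setminus L_C$ may have components whose frontier is only part of $L_C$, and one must produce a two-sided domain $D_C$ with full frontier $L_C$ that still meets $X$; controlling this, together with the connectedness and properness claims that rest on it, is the crux. Second, the construction of $\alpha_C$ tacitly assumed that the two accessibility arcs reach a common complementary domain of $X$; if instead $a$ and $b$ can be approached only from different complementary domains --- which can occur when $X$ separates the plane --- this step breaks and a separate treatment is needed, for instance via the closures of those domains, or via the prime-end compactifications of the domains involved, showing that all but countably many prime ends have impression meeting only one composant. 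Disentangling these two points is exactly Mazurkiewicz's theorem.
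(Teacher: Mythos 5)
The paper does not actually prove this statement; it is quoted from Mazurkiewicz \cite{Maz}. Judged on its own, your proposal has a fatal structural flaw, not merely the unresolved technical difficulties you flag in your last paragraph. The entire counting scheme rests on producing, for each composant $C$ accessible at two points, a nonempty set $W_C$ that is relatively open in $X$ and contained in $C$. No such set can exist: each composant of a nondegenerate indecomposable metric continuum is a countable union of proper subcontinua, each of which is nowhere dense in $X$ (a fact you yourself invoke), so every composant is meager in $X$ and, by the Baire category theorem, has empty interior in $X$. The same obstruction appears as an internal contradiction in your argument: you want $\overline{D_C}\cap X$ to be a \emph{proper} subcontinuum of $X$ that contains the nonempty open set $W_C=D_C\cap X$, but a proper subcontinuum of an indecomposable continuum is nowhere dense and therefore cannot contain a nonempty open subset of $X$.

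What actually happens with your configuration $L_C=\alpha_C\cup K_C$ is the opposite of what you need. Since $X$ is indecomposable and $K_C$ is a proper subcontinuum, the set $X\setminus K_C$ is \emph{connected} (if it split into separated nonempty pieces $A$ and $B$, then $K_C\cup\overline{A}$ and $K_C\cup\overline{B}$ would be two proper subcontinua whose union is $X$). Being connected and disjoint from $L_C$, it lies in exactly one component $D$ of $S^2\setminus L_C$, and since $X\setminus K_C$ is dense in $X$ we get $X\subseteq\overline{D}$. Hence no component of $S^2\setminus L_C$ other than $D$ meets $X$: the second component meeting $X$ that you hope to arrange does not exist, and $\overline{D_C}\cap X=X$ is never proper. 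The useful objects are precisely the components of $S^2\setminus L_C$ that are \emph{disjoint} from $X$ --- nonempty open ``pockets'' of $S^2\setminus X$ whose frontier lies in $\alpha_C\cup K_C$ --- and Mazurkiewicz's argument counts these, showing that distinct composants yield essentially disjoint pockets (equivalently, one can argue via prime ends of the complementary domains of $X$). To salvage your proof you should discard $W_C$ entirely, attach to each such composant one of these pockets, and show that uncountably many of them would force two disjoint composants to meet.
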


Let $X=\underleftarrow{\lim}\{I,f_i\}$, where $f_i: I\to I$ are continuous 
piecewise linear surjections. 

\begin{definition}
	Let $f: I\to I$ be a continuous surjection. An interval $I'\subset I$ is called a 
\emph{surjective interval} if $f(I')=I$ and $f(J)\neq I$ for every $J\subsetneq
I'$.
Let $A_1, \ldots, A_n$, $n \geq 1$, be the surjective intervals of $f$ ordered from left to right. 
	For every $i\in\{1, \ldots, n\}$ define the \emph{right accessible set} 
by $R(A_i) = \{ x\in A_i :  f(y)\neq f(x) \text{ for all } x<y\in A_i\}$ (see Figure~\ref{fig:LR}).
\end{definition}

\begin{figure}[!ht]
	\centering
	\begin{tikzpicture}[scale=4]
	\draw (0,0)--(1,0)--(1,1)--(0,1)--(0,0);
	\draw 
(0,0)--(1/9,0.6)--(2/9,0.4)--(1/3,1)--(0.4,0.8)--(0.45,1)--(2/3,0)--(7/9,
0.6)--(8/9,0.4)--(1,1);

	\begin{scope}
	[shift={(1,1)},rotate=180]
	\draw[->] (0,0.6)--(1/9,0.6);
	\draw[->] (0,5.4*0.02)--(0.02,5.4*0.02);
	\draw[->] (0,5.4*0.04)--(0.04,5.4*0.04);
	\draw[->] (0,5.4*0.06)--(0.06,5.4*0.06);
	\draw[->] (0,5.4*0.08)--(0.08,5.4*0.08);
	\draw[->] (0,5.4*0.1)--(0.1,5.4*0.1);
	\draw[->] (0,0.676)--(0.27,0.676);
	\draw[->] (0,0.78)--(0.29,0.78);
	\draw[->] (0,0.9)--(0.315,0.9);
	\draw[->] (0,0.97)--(0.325,0.97);
	\end{scope}
	
	\node at (1.1,1) {\small $f$};
	\draw[<->] (0,-0.1)--(1/3,-0.1);
	\node at (1/6,-0.2)  {\small $A_1$};
	\draw[<->] (0.45,-0.1)--(2/3,-0.1);
	\node at (0.55,-0.2)  {\small $A_2$};
	\draw[<->] (2/3,-0.1)--(1,-0.1);
	\node at (5/6,-0.2)  {\small $A_3$};
	
	\draw[thick] (0,0)--(1/9-0.035,0);
	\draw[dashed] (2/9,0.4)--(2/9,0);
	\draw[dashed] (1/9-0.035,0.4)--(1/9-0.035,0);
	\draw[dashed] (1/3,1)--(1/3,0);
	\draw[thick] (2/9,0)--(1/3,0);
	\node at (-0.2,0.24) {\tiny $R(A_1)$};
	\draw[->] (-0.2,0.2)--(0.05,0);
	\draw[->] (-0.2,0.2)--(0.3,0);

	\draw[thick] (2/3,0)--(0.74,0);
	\draw[dashed] (0.74,0.4)--(0.74,0);
	\draw[dashed] (8/9,0.4)--(8/9,0);
	\draw[thick] (8/9,0)--(1,0);
	\node at (1.2,0.24) {\tiny $R(A_3)$};
	\draw[->] (1.2,0.2)--(0.7,0);
	\draw[->] (1.2,0.2)--(8/9+1/18,0);
	
	\end{tikzpicture}
	\caption{Map $f$ has three surjective intervals. The right accessible 
sets in the surjective intervals $A_1$ and $A_3$ of $f$ are denoted in the 
picture by $R(A_1)$ and $R(A_3)$ respectively. Note that $R(A_2)=A_2$.}
	\label{fig:LR}
\end{figure}
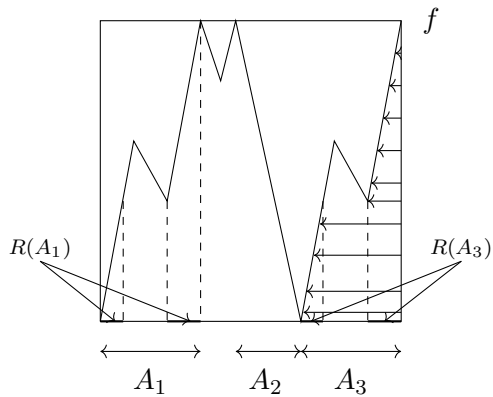

We will first assume that the map $f_i$ contains at least three surjective 
intervals for every $i\in\N$.
We will later see that this assumption can be made without loss of generality.

\begin{remark}\label{rem:image}
	Assume that $f:I\to I$ has $n\geq 3$ surjective intervals. 
	Then $A_1\cap A_n=\emptyset$ and $f([l,r])=I$ for every $l\in A_1$ and 
$r\in A_n$. 
	Also $f([l,r])=I$ for every $l\in A_i$ and $r\in A_j$ where $j-i\geq 2$.
\end{remark}

\begin{lemma}\label{lem:preimages}
	Let $J\subset I$ be a closed interval 
	and $f:I\to I$ a map with surjective intervals $A_1, \ldots A_n$, 
$n\geq 1$. 
	For every $i\in\{1, \ldots, n\}$ there exists an interval $J^i\subset 
A_i$ such that $f(J^i)=J$, 
	$f(\partial J^i)=\partial J$ and $J^i\cap R(A_i)\neq\emptyset$. 
\end{lemma}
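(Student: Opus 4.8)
The plan is to build $J^i$ so that one of its endpoints is the rightmost preimage in $A_i$ of one endpoint of $J$ — such a point is automatically in $R(A_i)$ — and the other endpoint is the \emph{closest} preimage of the other endpoint of $J$; between these two points $f$ is trapped inside $J$ for connectedness reasons. Write $J=[c,d]$. Since $A_i$ is a surjective interval, $f(A_i)=I\supseteq J$, so for each $v\in I$ the set $\{x\in A_i:f(x)=v\}$ is nonempty and compact; let $\rho(v)$ be its maximum. The first step is the observation that $\rho(v)\in R(A_i)$ for every $v\in I$: if $y\in A_i$ and $y>\rho(v)$, then $f(y)\neq v=f(\rho(v))$ by maximality, which is exactly the defining property of $R(A_i)$. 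If $J$ is degenerate, $c=d$, then $J^i:=\{\rho(c)\}$ works immediately, so from now on assume $c<d$; then $\rho(c)\neq\rho(d)$ since $f$ separates them.

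Assume first $\rho(c)<\rho(d)$; the opposite case is symmetric, interchanging the roles of $c$ and $d$ throughout. Because $\rho(d)>\rho(c)$ and $f(\rho(d))=d$, the set $\{x\in A_i: x>\rho(c),\ f(x)=d\}$ is nonempty and compact (it coincides with $\{x\in A_i:x\ge\rho(c),\ f(x)=d\}$ as $f(\rho(c))=c\neq d$), so it has a minimum $q$, with $\rho(c)<q\le\rho(d)$, $f(\rho(c))=c$, and $f(q)=d$. I would then set $J^i:=[\rho(c),q]\subseteq A_i$. On the open interval $(\rho(c),q)$ the map $f$ omits the value $c$ (no preimage of $c$ in $A_i$ lies to the right of $\rho(c)$) and omits the value $d$ (by minimality of $q$). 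Together with continuity and $f(\rho(c))=c<d=f(q)$, a short intermediate value argument then shows $f$ cannot leave $[c,d]$ on $[\rho(c),q]$: a value $>d$ at an interior point would force $f$ to hit $d$ again strictly between that point and $q$, and a value $<c$ would force $f$ to hit $c$ strictly between $\rho(c)$ and that point; both contradict the choices just made. Hence $f([\rho(c),q])\subseteq[c,d]$, and the reverse inclusion is immediate from the intermediate value property and $\{f(\rho(c)),f(q)\}=\{c,d\}$. Thus $f(J^i)=J$, $f(\partial J^i)=\partial J$, and $J^i\cap R(A_i)\ni\rho(c)$, which is what is required.

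I do not expect a serious obstacle once the interval is chosen correctly; the only delicate point is precisely this choice. One must place the point of $R(A_i)$ at an \emph{endpoint} of $J^i$ and take the other endpoint to be the \emph{nearest} preimage of the remaining value of $\partial J$. Naively taking, say, the rightmost preimages of both $c$ and $d$ (which are both in $R(A_i)$) and using the interval between them fails in general: $f$ can overshoot beyond $[c,d]$ in between, so the image is strictly larger than $J$. The argument above avoids this because, on the open interval strictly between $\rho(c)$ and the first subsequent preimage of $d$, the two values $c$ and $d$ are both absent, which pins $f$ inside $[c,d]$ by connectedness.

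Finally, I would remark that the proof uses only continuity of $f$ and $f(A_i)=I$; neither piecewise linearity nor the minimality clause in the definition of a surjective interval is needed, and the same argument applies simultaneously to every $i\in\{1,\dots,n\}$.
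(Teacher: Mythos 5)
Your construction coincides with the paper's: the paper likewise takes as one endpoint of $J^i$ the smaller of the two rightmost preimages of $\partial J$ (its $a_i,b_i\in R(A_i)$ are exactly your $\rho(c),\rho(d)$) and as the other endpoint the nearest subsequent preimage of the remaining value, and then simply asserts the desired properties, so your intermediate-value verification is a welcome addition. The only flaw is that your two IVT applications are crossed: a point $x\in(\rho(c),q)$ with $f(x)>d$ forces a preimage of $d$ in $(\rho(c),x)$ (from $f(\rho(c))=c<d<f(x)$), not in $(x,q)$ where $f$ could decrease to $d$ only at $q$ itself, while a point with $f(x)<c$ forces a preimage of $c$ in $(x,q)$ (from $f(x)<c<d=f(q)$), not in $(\rho(c),x)$; with the two intervals interchanged both contradictions go through and the proof is complete.
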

\begin{proof}
Consider the interval $J=[a,b]$ and fix $i\in\{1, \ldots, n\}$. Let 
$a_i,b_i\in R(A_i)$ be such that $f(a_i)=a$ and  $f(b_i)=b$. Assume first that 
$b_i<a_i$ (see Figure~\ref{fig:JL}). Find the smallest $\tilde a_i>b_i$ such 
that $f(\tilde a_i)=a$. Then $J^i:=[b_i, \tilde a_i]$ has the desired 
properties. If $a_i<b_i$, then take $J^i=[a_i, \tilde b_i]$, where $\tilde 
b_i>a_i$ is the smallest such that $f(\tilde b_i)=b$. 
\end{proof}

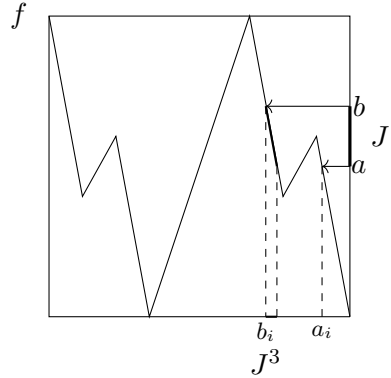
\begin{figure}[!ht]
	\centering
	\begin{tikzpicture}[scale=4]
	\begin{scope}
	[yscale=1,xscale=-1]
	\draw (0,0)--(1,0)--(1,1)--(0,1)--(0,0);
	\draw 
(0,0)--(1/9,0.6)--(2/9,0.4)--(1/3,1)--(2/3,0)--(7/9,0.6)--(8/9,0.4)--(1,1);
	\draw[very thick] (0,0.5)--(0,0.7);
	\node at (-0.1,0.6) {\small $J$};
	\draw[->] (0,0.5)--(0.5/5.4,0.5);
	\draw[->] (0,0.7)--(0.7/5.4+0.15,0.7);
	\draw[thick] (0.5/5.4+0.15,0.5)--(0.7/5.4+0.15,0.7);
	\draw[dashed] (0.5/5.4+0.15,0.5)--(0.5/5.4+0.15,0);
	\draw[dashed] (0.7/5.4+0.15,0.7)--(0.7/5.4+0.15,0);
	\draw[thick] (0.5/5.4+0.15,0)--(0.7/5.4+0.15,0);
	\node at (0.28,-0.15) {\small $J^3$};
	\draw[dashed] (0.5/5.4,0.5)--(0.5/5.4,0);
	\node at (0.5/5.4,-0.05) {\scriptsize $a_i$};
	\node at (0.7/5.4+0.15,-0.05) {\scriptsize $b_i$};
	\node at (-0.03,0.5) {\small $a$};
	\node at (-0.03,0.7) {\small $b$};
	\node at (1.1,1) {\small $f$};
	\end{scope}
	\end{tikzpicture}
	\caption{Construction of interval $J^i$ from the proof of 
Lemma~\ref{lem:preimages}.}
	\label{fig:JL}
\end{figure}

The following definition is a slight generalization of the notion of the 
``top'' of a permutation $p(G_f)$ of the graph $\Gamma_f$.

\begin{definition}
	Let $f: I\to I$ be a piecewise linear surjection and for a chain $C$ of 
$I$, let $p$ be a admissible 
	$C$-permutation of $G_f$.  For $x\in I$ denote
	the point in $p(G_f)$ corresponding to $f(x)$ by $p(f(x))$. 
	We say that $x$ is \emph{topmost in $p(G_f)$} if there exists a 
vertical ray $\{f(x)\}\times[h, \infty)$, where $h\in\R$, which intersects 
$p(G_f)$ only in $p(f(x))$.
\end{definition}

\begin{remark}
	If $A_1, \ldots, A_n$ are surjective intervals of $f: I\to I$, then 
every point in $R(A_n)$ is topmost. Also, for every $i=1, \ldots, n$ there 
exists a permutation of $G_f$ such that every point in $R(A_i)$ is topmost.
\end{remark}

\begin{lemma}\label{lem:twopoints}
	Let $f: I\to I$ be a map with surjective intervals $A_1, \ldots A_n$, 
$n\geq 1$. For $[a, b]=J\subset I$ and $i\in\{1, \ldots, n\}$ denote by $J^i$ 
an interval from Lemma~\ref{lem:preimages}. There exists an admissible 
permutation $p_i$ of $G_f$ such that both endpoints of $J^i$ are topmost in 
$p_i(G_f)$. 
\end{lemma}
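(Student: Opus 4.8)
The plan is to analyze the local structure of $f$ on the interval $J^i$ provided by Lemma~\ref{lem:preimages} and construct the permutation $p_i$ explicitly by reflecting the branches of $G_f$ that lie ``outside'' $J^i$ so that the horizontal branches meeting the two endpoints of $J^i$ are brought to the top. Recall from Lemma~\ref{lem:preimages} that $J^i=[b_i,\tilde a_i]$ (or $[a_i,\tilde b_i]$) with $f(\partial J^i)=\partial J=\{a,b\}$ and $J^i\cap R(A_i)\neq\emptyset$; in particular one endpoint, say $b_i$, lies in $R(A_i)$, meaning $f$ does not repeat the value $f(b_i)$ anywhere to the right of $b_i$ inside $A_i$. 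The other endpoint $\tilde a_i$ was chosen as the \emph{smallest} point $>b_i$ with $f(\tilde a_i)=a$, so $f$ does not assume the value $a$ strictly between $b_i$ and $\tilde a_i$. These two "first/last occurrence" properties are exactly what one needs to push the branches containing $f(b_i)$ and $f(\tilde a_i)$ past all the intervening branches without creating a self-intersection; this is the same mechanism as in the proof of Lemma~\ref{lem:zigzag}, where being "topmost" was achieved by reflecting branches over the branch of interest.

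First I would set up notation: let $H_{r}$ and $H_{s}$ be the horizontal branches of $G_f$ that contain $f(b_i)$ and $f(\tilde a_i)$ respectively (these are the branches over the critical intervals $I_r$, $I_s$ adjacent to $b_i$, $\tilde a_i$ from inside $J^i$). I want an admissible permutation $p_i$ with $p_i(r)$ and $p_i(s)$ both large enough that vertical rays above $f(b_i)$ and $f(\tilde a_i)$ meet $p_i(G_f)$ only at $p_i(f(b_i))$ and $p_i(f(\tilde a_i))$. Next I would argue that we may reduce to the case where $J^i$ is everything: because $A_i$ is a surjective interval, $f(J^i)=J$ and $f$ does not overshoot, but the branches of $G_f$ coming from $I\setminus A_i$ or from the part of $A_i$ outside $J^i$ can be reflected over $H_r$ (for those on the $b_i$-side) and over $H_s$ (for those on the $\tilde a_i$-side). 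The key point is that admissibility (Definition~\ref{def:flat}) is only violated when a permuted branch $p(H_k)$ passes "between" two consecutive permuted branches at a height where $f(t_{i+1})$ is actually attained on $[t_k,t_{k+1}]$; the choice of $b_i\in R(A_i)$ and minimality of $\tilde a_i$ guarantees that the relevant critical values $f(b_i)=b$ (or $a$) and $f(\tilde a_i)=a$ (or $b$) are \emph{not} re-attained on the branches we are moving, so condition (1) of Definition~\ref{def:flat} holds, or else the repeated value lies in the same link and condition (2) applies. Then I would carry out the reflection inductively, exactly as in the three-part induction in the proof of Lemma~\ref{lem:zigzag}, but now aiming to lift \emph{two} branches simultaneously rather than one — first reflect everything strictly left of $J^i$ over $H_r$ and everything strictly right of $J^i$ over $H_s$, then within $J^i$ reflect the branches on the $b_i$-side over $H_r$ and on the $\tilde a_i$-side over $H_s$, so that $H_r$ and $H_s$ end up as the top two levels of $p_i(G_f)$.

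The main obstacle will be the \emph{simultaneity}: in Lemma~\ref{lem:zigzag} only one branch needed to reach the top, and one could freely sacrifice the position of all other branches, whereas here both endpoints of $J^i$ must be topmost at once, so the two reflection procedures must not interfere. The resolution is that $b_i$ and $\tilde a_i$ are the two \emph{endpoints} of a single interval $J^i$ on which $f$ is "essentially surjective onto $J$ without folding back over $a$ or $b$," so the branch $H_r$ is reachable by only touching branches to the left of $\tilde a_i$ and $H_s$ only by touching branches to the right of $b_i$, and the two sets of moves act on disjoint families of vertical connectors $V_j$. A secondary subtlety is the case distinction $b_i<a_i$ versus $a_i<b_i$ from Lemma~\ref{lem:preimages} (i.e.\ whether $J^i$ is traversed with $f$ decreasing or increasing at its ends): both cases are handled by the same argument after reflecting the whole picture over a horizontal line if necessary, just as "without loss of generality $f(t_k)>f(t_{k+1})$" was used in Lemma~\ref{lem:zigzag}. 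Finally I would note that "topmost" is stable under the quotienting of the vertical intervals $V_j$ to points (as described after Definition~\ref{def:flat}), so the conclusion is unaffected by passing to the quotient nerve.
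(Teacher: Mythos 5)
Your overall mechanism --- reflect obstructing blocks of branches below the branches carrying the two endpoints of $J^i$, exploiting that one endpoint is the \emph{last} point of $A_i$ with value $a$ (resp.\ $b$) and the other the \emph{first} point past it with value $b$ (resp.\ $a$) --- is the same as the paper's. But one concrete step of your plan fails: you cannot in general arrange that $H_r$ and $H_s$ become ``the top two levels of $p_i(G_f)$'', and the move you propose to achieve this (reflecting the branches of $J^i$ lying between them under $H_r$, resp.\ $H_s$) produces an inadmissible permutation. Take $J^i=[a_i,\tilde b_i]$ with two interior critical points, the values running $a\to v_1\to v_2\to b$ with $a<v_2<v_1<b$, so that $G_{f|_{J^i}}$ consists of three branches: $B_1$ containing $f(a_i)$ with $f(B_1)\supset[a,v_1]$, $B_2$ with $f(B_2)=[v_2,v_1]$, and $B_3$ containing $f(\tilde b_i)$ with $f(B_3)=[v_2,b]$. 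If $B_2$ is placed below both $B_1$ and $B_3$, then either the vertical joining $B_1$ to $B_2$ (at value $v_1$) has $B_3$ strictly between its endpoints' levels while $v_1\in f(B_3)$, or the vertical joining $B_2$ to $B_3$ (at value $v_2$) has $B_1$ in between while $v_2\in f(B_1)$; for a fine chain $C$ neither is excused by condition (2) of Definition~\ref{def:flat}. So no admissible permutation puts $B_1$ and $B_3$ on the top two levels, and your ``simultaneity'' worry is not resolved by splitting the interior of $J^i$ into two stacks.

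The repair is to leave those interior branches completely alone: since $f(y)\neq a$ for $y\in A_i$, $y>a_i$, and $f(y)\neq b$ for $y\in(a_i,\tilde b_i)$, one has $f\bigl((a_i,\tilde b_i)\bigr)\subset(a,b)$, so the branches strictly between $H_r$ and $H_s$ never meet a vertical ray over $a$ or over $b$ and may simply stay, in their natural order, between $H_r$ and $H_s$. This is exactly what the paper's proof does: it keeps $f|_{[0,m]}$ untouched ($m$ the smallest critical point $\geq\tilde b_i$), so the branch of $\tilde b_i$ is already the top of that block and the branch of $a_i$ is overtopped only by branches whose values lie in $(a,f(m)]$, and it reflects only $f|_{[m,r_i]}$ (over $f|_{[a_i,m]}$) and $f|_{[r_i,1]}$ (over $f|_{[0,l_i]}$) downward, with admissibility coming from $f(x)>a$ for all $x\in A_i$, $x>a_i$. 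Two smaller points: reflecting ``everything strictly left of $J^i$ over $H_r$'' is unnecessary (in the natural order those branches already lie below $H_r$ and $H_s$) and, read literally as moving them to the other side of $H_r$, would be harmful; and ``the two sets of moves act on disjoint families of vertical connectors'' is not an admissibility argument --- admissibility is a condition on the values attained by every branch a lengthened vertical passes, which is what must be checked.
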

\begin{proof}
Let $A_i=[l_i, r_i]$. Assume first that $f(l_i)=0$ and 
$f(r_i)=1$, thus $a_i<b_i$ (recall the notation 
	$a_i, \tilde a_i$ and $b_i, \tilde b_i$ from the proof of 
Lemma~\ref{lem:preimages}). Find the smallest critical point 
	$m$ of $f$ such that $m\geq \tilde b_i$ and note that $f(x)> f(a)$ for 
all $x\in A_i$, $x>m$. So we can reflect $f|_{[m,r_i]}$ over $f|_{[a_i,m]}$ and 
$f|_{[r_i,1]}$ over $f|_{[0,l_{i}]}$. 
	This makes $a_i$ and $\tilde b_i$ topmost, see 
Figure~\ref{fig:topmost}. In the case when $f(l_i)=1, f(r_i)=0$, thus $a_i>b_i$, 
we have that $f(x)<f(b)$ for all $x\in A_i$, $x>m$ so we can again reflect 
$f|_{[m,r_i]}$ over $f|_{[a_i,m]}$ making $\tilde a_i$ and $b_i$ topmost.
\end{proof}

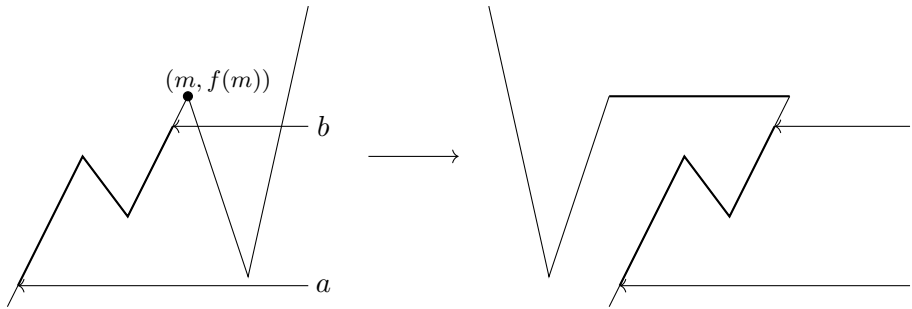
\begin{figure}[!ht]
	\centering
	\begin{tikzpicture}[scale=4]
	\draw (0,0)--(0.25,0.5)--(0.4,0.3)--(0.6,0.7)--(0.8,0.1)--(1,1);
	\draw[->] (1,0.07)--(0.035,0.07);
	\draw[->] (1,0.6)--(0.55,0.6);
	\draw[solid, fill] (0.6,0.7) circle (0.015);
	\node at (0.7,0.75) {\scriptsize $(m,f(m))$};
	\draw[thick] (0.55,0.6)--(0.4,0.3)--(0.25,0.5)--(0.035,0.07);
	\node at (1.05,0.6) {\small $b$};
	\node at (1.05,0.07) {\small $a$};
	
	\draw[->] (1.2,0.5)--(1.5,0.5);
	\draw (2,0)--(2.25,0.5)--(2.4,0.3)--(2.6,0.7);
	\draw[<-] (2.035,0.07)--(3,0.07);
	\draw[<-] (2.55,0.6)--(3,0.6);
	\draw[thick] (2.6,0.7)--(2,0.7);
	\draw (2,0.7)--(1.8,0.1)--(1.6,1);
	\draw[thick] (2.55,0.6)--(2.4,0.3)--(2.25,0.5)--(2.035,0.07);
	\end{tikzpicture}
	\caption{Making endpoints of $J^i$ topmost.}
	\label{fig:topmost}
\end{figure}

\begin{lemma}\label{lem:Mayerprep}
Let $X=\underleftarrow{\lim}\{I,f_i\}$, where each $f_i: I\to I$ is a 
continuous piecewise linear surjection and assume that $X$ is indecomposable. 
If $f_i$ contains at least three surjective intervals for every 
$i\in\N$, then there exist uncountably many planar embeddings of 
$X$ that are not strongly equivalent.
\end{lemma}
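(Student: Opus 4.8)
The plan is to build uncountably many embeddings of $X$ by making different composants accessible in more than one point, using the preparatory lemmas on surjective intervals, and then invoking Mazurkiewicz's Theorem~\ref{thm:Maz} to distinguish them up to strong equivalence. Fix a nondegenerate closed interval $J\subset I$; for each $i\in\N$ we want to prescribe, coordinate by coordinate, which surjective interval of $f_i$ we ``aim through''. Concretely, a choice of a sequence $\mathbf{s}=(s_i)_{i\in\N}$ with $s_i\in\{1,\dots,n(i)\}$ (where $n(i)\ge 3$ is the number of surjective intervals of $f_i$) determines, via repeated application of Lemma~\ref{lem:preimages}, a nested sequence of intervals whose inverse limit is a proper subcontinuum $Y_{\mathbf{s}}\subset X$ projecting onto a point configuration that we can force to be accessible in two points.

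\textbf{Step 1: Producing an arc in a composant, accessible in two points.} Start from $J\subset I$ at level $0$. By Lemma~\ref{lem:preimages}, there is $J^{s_1}\subset A_{s_1}$ (a surjective interval of $f_1$) with $f_1(J^{s_1})=J$, $f_1(\partial J^{s_1})=\partial J$, and $J^{s_1}\cap R(A_{s_1})\ne\emptyset$; iterate to get $J_k\subset I$ at each level $k$ with $f_k(J_k)=J_{k-1}$, $f_k(\partial J_k)=\partial J_{k-1}$, $J_0=J$. Then $Y_{\mathbf{s}}:=\{y\in X: y_k\in J_k\ \forall k\}=\underleftarrow{\lim}\{J_k,f_k|_{J_k}\}$ is a subcontinuum of $X$, and the two ``boundary threads'' $\beta^-=(\inf\partial J_k)$-type and $\beta^+$ obtained by always following endpoints lie in $Y_{\mathbf{s}}$ (using $f_k(\partial J_k)=\partial J_{k-1}$, these are well-defined points of $X$); moreover $Y_{\mathbf{s}}$ contains an arc joining them since on the surjective intervals the bonding maps restricted between successive endpoint-preimages can be arranged to be monotone. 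By Lemma~\ref{lem:twopoints}, at each level we may choose the admissible permutation $p_k$ of $G_{f_k}$ so that \emph{both} endpoints of $J_k$ are topmost in $p_k(G_{f_k})$; feeding these permutations into Theorem~\ref{thm:algorithm} (the topmost condition is exactly what is needed there, applied to the two boundary threads simultaneously) yields an embedding $\phi_{\mathbf{s}}$ of $X$ in which both $\phi_{\mathbf{s}}(\beta^-)$ and $\phi_{\mathbf{s}}(\beta^+)$ are accessible, hence the composant of $Y_{\mathbf{s}}$ (equivalently, the composant of $X$ through $Y_{\mathbf{s}}$, since $X$ is indecomposable) is accessible in at least two points of $\phi_{\mathbf{s}}(X)$.

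\textbf{Step 2: Uncountably many distinct composants.} The point now is that by varying $\mathbf{s}$ we reach uncountably many distinct composants of $X$. Since $n(i)\ge 3$ for all $i$, Remark~\ref{rem:image} gives that if two sequences $\mathbf{s},\mathbf{s}'$ differ in position $i$ by at least $2$ in the cyclic order, the corresponding level-$i$ intervals are disjoint with $f_i$ mapping the gap between them onto all of $I$, which forces $Y_{\mathbf{s}}$ and $Y_{\mathbf{s}'}$ into different composants of $X$ (two subcontinua in the same composant of an indecomposable continuum cannot have ``independent'' behaviour of this kind across infinitely many levels — one packages this as: the union, together with a chain of subcontinua linking them, would have to be a proper subcontinuum, contradicting that the gaps surject). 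A standard Cantor-set argument then extracts uncountably many sequences $\mathbf{s}$ pairwise realizing this disjointness infinitely often, hence uncountably many pairwise-distinct composants $K_{\mathbf{s}}$, each accessible in $\ge 2$ points under the embedding $\phi_{\mathbf{s}}$.

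\textbf{Step 3: Conclusion via Mazurkiewicz.} Suppose for contradiction that only countably many of the $\phi_{\mathbf{s}}$ are pairwise non-strongly-equivalent; then some single embedding, up to a homeomorphism of $\R^2$, realizes uncountably many of the configurations above. But a homeomorphism of $\R^2$ preserves accessibility, so we would obtain a single planar embedding $\phi_{\mathbf{s}_0}(X)$ in which uncountably many distinct composants are accessible in at least two points — directly contradicting Theorem~\ref{thm:Maz} applied to the indecomposable planar continuum $\phi_{\mathbf{s}_0}(X)$. Hence the $\phi_{\mathbf{s}}$ fall into uncountably many strong-equivalence classes, proving the lemma.

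\textbf{Expected main obstacle.} The delicate point is Step 2: making precise, and proving, that distinct admissible sequences $\mathbf{s}$ genuinely land in distinct composants of $X$. This needs a careful use of indecomposability (no proper subcontinuum is large) together with the surjectivity statement of Remark~\ref{rem:image}, and the bookkeeping of which coordinates must differ and by how much in order to guarantee composant-separation infinitely often. The actual embedding construction in Step 1 is essentially an application of Theorem~\ref{thm:algorithm} and Lemmas~\ref{lem:preimages}--\ref{lem:twopoints}, so the real work is the combinatorial/topological separation argument and verifying that the two boundary threads do bound an arc inside $Y_{\mathbf{s}}$.
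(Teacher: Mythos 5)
Your proposal is correct and follows essentially the same route as the paper: build nested preimage intervals via Lemma~\ref{lem:preimages}, make both boundary threads accessible via Lemma~\ref{lem:twopoints} and Theorem~\ref{thm:algorithm}, separate composants using Remark~\ref{rem:image} for sequences differing (by at least two surjective intervals) infinitely often, and conclude by a pigeonhole argument from Mazurkiewicz's Theorem~\ref{thm:Maz}. The paper streamlines your Step 2 by fixing once and for all two indices $L_i,R_i$ with $|L_i-R_i|\ge 2$ at each level and restricting to sequences in $\prod_i\{L_i,R_i\}$, and your worry about the boundary threads bounding an arc is unnecessary, since both threads already lie in the subcontinuum $J^{(n_i)}$ and hence in a single composant.
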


\begin{proof}
	For every $i\in\N$ let $k_i\geq 3$ be the number of surjective 
branches of $f_i$ and fix $L_i, R_i\in\{1, \ldots, k_i\}$ such that 
$|L_i-R_i|\geq 2$.
	Let $J\subset I$ and $(n_i)_{i\in\N}\in\prod_{i\in\N}\{L_i,R_i\}$. Then 
	$$J^{(n_i)}:=J\stackrel{\text{$f_1$}}{\leftarrow} 
J^{n_1}\stackrel{\text{$f_2$}}{\leftarrow} 
J^{n_1n_2}\stackrel{\text{$f_3$}}{\leftarrow} 
J^{n_1n_2n_3}\stackrel{\text{$f_4$}}{\leftarrow}\ldots$$ 
	is a well-defined subcontinuum of $X$. Here we used the notation 
$J^{nm}=(J^n)^m$. Moreover, Lemma~\ref{lem:twopoints} and 
Theorem~\ref{thm:algorithm} imply that $X$ can be embedded in the plane such 
that both points in $\partial J\leftarrow \partial J^{n_1}\leftarrow \partial 
J^{n_1n_2}\leftarrow \partial J^{n_1n_2n_3}\leftarrow\ldots$ are accessible. 
	
	Remark~\ref{rem:image} implies that for every $f: I\to I$ with 
surjective intervals $A_1, \ldots, A_n$, every $|i-j|\geq 2$ and every 
$J\subset I$ it holds that $f([J^i, J^j])=I$, where $[J^i, J^j]$ denotes the 
convex hull of $J^i$ and $J^j$. So if $(n_i), 
(m_i)\in\prod_{i\in\N}\{L_i,R_i\}$ 
	differ at infinitely many places,
	then there is no proper subcontinuum of $X$ which contains both $J^{(n_i)}$ 
and $J^{(m_i)}$, \ie they are contained in different composants of $X$. Now 
Theorem~\ref{thm:Maz} implies that there are uncountably many 
planar embeddings of $X$ that are not strongly equivalent.
\end{proof}

Next we prove that the assumption of at least three surjective intervals can be 
made without loss of generality for every nondegenerate indecomposable chainable continuum. For 
$X=\underleftarrow{\lim}\{I,f_i\}$, where each $f_i: I\to I$ is a continuous 
piecewise linear surjection, we show that there is 
$X'=\underleftarrow{\lim}\{I,g_i\}$ homeomorphic to $X$ such that $g_i$ has at 
least three surjective intervals for every $i\in\N$. We will build on the 
following remark.

\begin{remark}\label{rem:May} Assume that $f, g: I\to I$ each have at least two surjective 
intervals. Note that then $f\circ g$ has at least three surjective intervals. 
So if $f_i$ has two surjective intervals for every $i\in\N$, then $X$ can be 
embedded in the plane in uncountably many nonequivalent ways.
\end{remark}

\begin{definition}
	Let $\eps>0$ and let $f: I\to I$ be a continuous surjection. We say 
that $f$ is $P_{\eps}$ if for every two segments $A,B\subset I$ such that $A\cup 
B= I$ it holds that $d_{H}(f(A), I)<\eps$ or $d_{H}(f(B), I)<\eps$, where 
$d_{H}$ denotes the Hausdorff distance.
\end{definition}

\begin{remark}\label{rem:threepts}
	Let $f: I\to I$ and $\eps>0$. Note that $f$ is $P_{\eps}$ if and only 
if there exist $0\leq x_1<x_2<x_3\leq 1$ such that one of the following holds
	\begin{itemize}
		\item[(a)] $|f(x_1)-0|,|f(x_3)-0|<\eps$, $|f(x_2)-1|<\eps$, or
		\item[(b)] $|f(x_1)-1|,|f(x_3)-1|<\eps$, $|f(x_2)-0|<\eps$.
	\end{itemize}
\end{remark}

For $n<m$ denote by $f_n^m=f_n\circ f_{n+1}\circ\ldots\circ f_{m-1}$.

\begin{theorem}[Kuykendall \cite{Kuy}]\label{thm:kuykendall} The inverse limit 
$X=\underleftarrow{\lim}\{I,f_i\}$ is indecomposable if and only if for every $\eps>0$ and every 
$n\in\N$ there exists $m>n$ such that $f_n^m$ is $P_{\eps}$.
\end{theorem}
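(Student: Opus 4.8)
The plan is to prove each direction of the equivalence in contrapositive form.

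\emph{From the $P_\eps$-condition to indecomposability.} Suppose $X=K_1\cup K_2$ is decomposable, with $K_1,K_2$ proper subcontinua. Since every $f_i$ is surjective we have $\pi_n(X)=I$ and $\pi_n=f_n^m\circ\pi_m$ on $X$ whenever $n<m$; moreover any subcontinuum $K$ equals $\underleftarrow{\lim}\{\pi_j(K),f_{j+1}|_{\pi_{j+1}(K)}\}$ (the fibres $\{x\in K:\pi_j(x)=y_j\}$ form nested nonempty compacta), so if $\pi_j(K)=I$ for all $j$ then $K=X$; as $\pi_j(K)=I$ forces $\pi_{j-1}(K)=f_j(I)=I$, a proper $K$ has $\pi_m(K)\subsetneq I$ for all $m$ past some index. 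Fix $n$ with $\pi_m(K_1),\pi_m(K_2)\subsetneq I$ for all $m\ge n$, and set $\eps:=\min\{d_H(\pi_n(K_1),I),d_H(\pi_n(K_2),I)\}>0$. For any $m>n$ the segments $A:=\pi_m(K_2)$, $B:=\pi_m(K_1)$ cover $I=\pi_m(X)$, while $f_n^m(A)=\pi_n(K_2)$ and $f_n^m(B)=\pi_n(K_1)$ are both at Hausdorff distance $\ge\eps$ from $I$. Hence $f_n^m$ is not $P_\eps$, so for this $\eps$ and $n$ the $P_\eps$-condition fails.

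\emph{From indecomposability to the $P_\eps$-condition.} Suppose the condition fails, so there are $\eps_0>0$ (we may take $\eps_0<\tfrac12$) and $n_0$ such that $f_{n_0}^m$ is not $P_{\eps_0}$ for every $m>n_0$. By Remark~\ref{rem:threepts}, each such failure yields segments $A_m\ni 0$ and $B_m\ni 1$ with $A_m\cup B_m=I$ whose $f_{n_0}^m$-images avoid an $\eps_0$-neighbourhood of an endpoint of $I$; since those two images have union $I$, a short case analysis lets us pass to a subsequence $m_1<m_2<\cdots$ along which $f_{n_0}^{m_k}(A_{m_k})=[0,u_k]\subseteq[0,1-\eps_0]$ and $f_{n_0}^{m_k}(B_{m_k})=[v_k,1]\subseteq[\eps_0,1]$ with $v_k\le u_k$. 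I would then show that it suffices to produce subintervals $J_n,J'_n\subseteq I$ $(n\ge 0)$ with $f_{n+1}(J_{n+1})\subseteq J_n$, $f_{n+1}(J'_{n+1})\subseteq J'_n$, $J_n\cup J'_n=I$ for all $n$, and $J_{n_0}\ne I\ne J'_{n_0}$: then $Y_1:=\underleftarrow{\lim}\{J_n,f_{n+1}|_{J_{n+1}}\}$ and $Y_2:=\underleftarrow{\lim}\{J'_n,f_{n+1}|_{J'_{n+1}}\}$ are subcontinua of $X$, proper since $\pi_{n_0}(Y_i)\subsetneq I$, and $Y_1\cup Y_2=X$ because for $x=(x_n)\in X$ one of $\{n:x_n\in J_n\}$, $\{n:x_n\in J'_n\}$ is infinite and ``$x_n\in J_n$'' propagates to all smaller indices along the bonding maps. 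On the levels $0,1,\dots,m_k$ (for any fixed $k$) this is immediate: take $J_{m_k}:=A_{m_k}$, $J'_{m_k}:=B_{m_k}$ and push down by the $f_{j+1}$, which leaves $J_{n_0}=[0,u_k]\subsetneq I$ and $J'_{n_0}=[v_k,1]\subsetneq I$.

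\emph{The main obstacle} is to extend $(J_n),(J'_n)$ to the levels $n>m_k$ while keeping $J_n\cup J'_n=I$. A naive inductive choice of subintervals $J_{n+1}\subseteq f_{n+1}^{-1}(J_n)$ and $J'_{n+1}\subseteq f_{n+1}^{-1}(J'_n)$ can be defeated when $f_{n+1}$ has a ``bump'' carrying a middle piece of $J_n$ over $J'_n$: the two full preimages are disconnected and, although their union is $I$, no single subinterval taken from each still covers $I$. The resolution, which is the real content of \cite{Kuy}, is to construct the whole family of splittings simultaneously, exploiting that $f_{n_0}^m$ fails $P_{\eps_0}$ for \emph{all} $m>n_0$: one considers at each intermediate level $n$ the finitely many components of $(f_{n_0}^n)^{-1}([0,1-\eps_0])$ and of $(f_{n_0}^n)^{-1}([\eps_0,1])$ (finitely many because the maps are piecewise linear), observes that the bonding maps send components into components, and uses a König/compactness argument in the hyperspace of subcontinua of $X$ to thread coherent choices whose unions survive in the Hausdorff limit as all of $I$. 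I expect this connectedness bookkeeping to be the only genuinely delicate step; the rest is routine.
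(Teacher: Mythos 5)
The paper does not prove this statement; it quotes it from Kuykendall \cite{Kuy}, so your argument has to stand on its own. Your first direction is correct and complete: if $X=K_1\cup K_2$ with both subcontinua proper, then past some level both projections are proper closed subintervals, and the pair $\bigl(\pi_m(K_1),\pi_m(K_2)\bigr)$ witnesses the failure of $P_{\eps}$ for $\eps=\min_i d_H(\pi_n(K_i),I)$ and every $m>n$. (Your identity $\pi_n=f_n^m\circ\pi_m$ is off by one against the paper's convention $f_n^m=f_n\circ\cdots\circ f_{m-1}$, but that is cosmetic.) The reduction in the second direction is also correct: threads of intervals $J_n,J'_n$ with $f_{n+1}(J_{n+1})\subseteq J_n$, $f_{n+1}(J'_{n+1})\subseteq J'_n$, $J_n\cup J'_n=I$ and $J_{n_0},J'_{n_0}$ proper do yield a decomposition.

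The gap is that the passage from your finite partial threads to an infinite one --- which is the entire content of this direction --- is not carried out; you only describe what you ``expect'' the argument to be. Moreover, the route you sketch is misdirected. Threading components of $(f_{n_0}^n)^{-1}([0,1-\eps_0])$ via K\"onig's lemma gives you \emph{some} coherent thread of components, but the segments $A_m$ are chosen independently for each $m$, so $f_{m+1}(A_{m+1})$ need not meet $A_m$ and the threaded components need not be the ones containing your chosen $A_n$'s; once that is lost, so is the covering condition $J_n\cup J'_n=I$, which is the whole point. Piecewise linearity (finiteness of components) is also not the right thing to lean on: the theorem holds for arbitrary continuous bonding surjections.

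The correct and simpler way to close the gap is to take limits of the interval threads themselves, with no mention of components. Keep your $J^{(k)}_n:=f_{n+1}\circ\cdots\circ f_{m_k}(A_{m_k})$ and $J'^{(k)}_n$ for $n\le m_k$. The hyperspace of closed subintervals of $I$ with the Hausdorff metric is compact, so a diagonal argument produces a subsequence of $k$'s along which $J^{(k)}_n\to J_n$ and $J'^{(k)}_n\to J'_n$ for every fixed $n$. Each limit is again a closed interval (Hausdorff limits of subcontinua of $I$ are subcontinua); $f_{n+1}(J_{n+1})=\lim_k f_{n+1}(J^{(k)}_{n+1})=\lim_k J^{(k)}_n=J_n$ by uniform continuity of $f_{n+1}$; $J_n\cup J'_n=\lim_k\bigl(J^{(k)}_n\cup J'^{(k)}_n\bigr)=I$ since $d_H(A\cup B,A'\cup B')\le\max\{d_H(A,A'),d_H(B,B')\}$; and $d_H(J_{n_0},I)=\lim_k d_H(J^{(k)}_{n_0},I)\ge\eps_0$, so $J_{n_0}$ and $J'_{n_0}$ are proper. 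Feeding this into your reduction finishes the proof; no connectedness bookkeeping is needed, because images of intervals under continuous maps are intervals and Hausdorff limits of intervals are intervals.
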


Furthermore, we will need the following strong theorem.

\begin{theorem}[Mioduszewski, \cite{Miod}]\label{thm:Miod} Two continua 
$\underleftarrow{\lim}\{I, f_i\}$ and $\underleftarrow{\lim}\{I, g_i\}$ are 
homeomorphic if and only if for every sequence of positive integers $\eps_i\to 
0$ there exists an infinite diagram as in Figure~\ref{fig:Miod},
	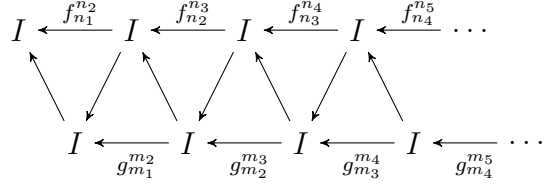
\begin{figure}[!ht]
		\centering
		\begin{tikzpicture}[->,>=stealth',auto, scale=1.5]
		\node (1) at (0,1) {$I$};
		\node (2) at (0.5,0) {$I$};
		\node (3) at (1,1) {$I$};
		\node (4) at (1.5,0) {$I$};
		\node (5) at (2,1) {$I$};
		\node (6) at (2.5,0) {$I$};
		\node (7) at (3,1) {$I$};
		\node (8) at (3.5,0) {$I$};
		\node (9) at (4,1) {$\ldots$};
		\node (10) at (4.5,0) {$\ldots$};
		\draw [->] (3) to (1);
		\draw [->] (5) to (3);
		\draw [->] (7) to (5);
		\draw [->] (9) to (7);
		\draw [->] (4) to (2);
		\draw [->] (6) to (4);
		\draw [->] (8) to (6);
		\draw [->] (10) to (8);
		\draw [->] (2) to (1);
		\draw [->] (3) to (2);
		\draw [->] (4) to (3);
		\draw [->] (5) to (4);
		\draw [->] (6) to (5);
		\draw [->] (7) to (6);
		\draw [->] (8) to (7);
		
		\node at (0.55,1.15) {\tiny $f_{n_1}^{n_2}$};
		\node at (1.55,1.15) {\tiny $f_{n_2}^{n_3}$};
		\node at (2.55,1.15) {\tiny $f_{n_3}^{n_4}$};
		\node at (3.55,1.15) {\tiny $f_{n_4}^{n_5}$};
		\node at (1.05,-0.2) {\tiny $g_{m_1}^{m_2}$};
		\node at (2.05,-0.2) {\tiny $g_{m_2}^{m_3}$};
		\node at (3.05,-0.2) {\tiny $g_{m_3}^{m_4}$};
		\node at (4.05,-0.2) {\tiny $g_{m_4}^{m_5}$};
		\end{tikzpicture}
		\caption{Infinite $(\eps_i)$-commutative diagram from 
Mioduszewski's theorem.}
		\label{fig:Miod}
	\end{figure} 
	where $(n_i)$ and $(m_i)$ are sequences of strictly increasing 
integers, $f_{n_i}^{n_{i+1}}=f_{n_i+1}\circ\ldots\circ f_{n_{i+1}}$, 
$g_{m_i}^{m_{i+1}}=g_{m_i+1}\circ\ldots\circ g_{m_{i+1}}$ for every $i\in\N$ 
and every subdiagram as in Figure~\ref{fig:Miod2} is $\eps_i$-commutative.
	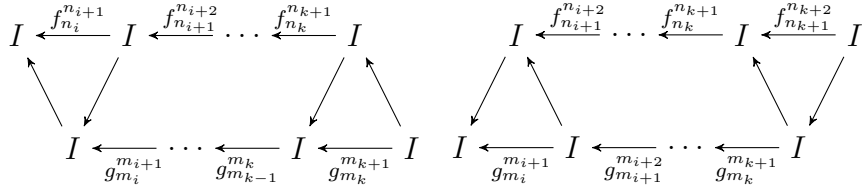
\begin{figure}[!ht]
		\centering
		\begin{tikzpicture}[->,>=stealth',auto, scale=1.5]
		\node (1) at (0,1) {$I$};
		\node (2) at (0.5,0) {$I$};
		\node (3) at (1,1) {$I$};
		\node (4) at (1.5,0) {$\ldots$};
		\node (5) at (2,1) {$\ldots$};
		\node (6) at (2.5,0) {$I$};
		\node (7) at (3,1) {$I$};
		\node (8) at (3.5,0) {$I$};
		\draw [->] (3) to (1);
		\draw [->] (5) to (3);
		\draw [->] (4) to (2);
		\draw [->] (6) to (4);
		\draw [->] (2) to (1);
		\draw [->] (3) to (2);
		\draw [->] (7) to (5);
		\draw [->] (7) to (6);
		\draw [->] (8) to (6);
		\draw [->] (8) to (7);
		
		\node at (0.55,1.15) {\tiny $f_{n_i}^{n_{i+1}}$};
		\node at (1.55,1.15) {\tiny $f_{n_{i+1}}^{n_{i+2}}$};
		\node at (2.55,1.15) {\tiny $f_{n_k}^{n_{k+1}}$};
		\node at (1.05,-0.2) {\tiny $g_{m_i}^{m_{i+1}}$};
		\node at (2.05,-0.2) {\tiny $g_{m_{k-1}}^{m_k}$};
		\node at (3.05,-0.2) {\tiny $g_{m_{k}}^{m_{k+1}}$};
		\end{tikzpicture}
		\begin{tikzpicture}[->,>=stealth',auto, scale=1.5]
		
		\node (2) at (0.5,0) {$I$};
		\node (3) at (1,1) {$I$};
		\node (4) at (1.5,0) {$I$};
		\node (5) at (2,1) {$\ldots$};
		\node (6) at (2.5,0) {$\ldots$};
		\node (7) at (3,1) {$I$};
		\node (8) at (3.5,0) {$I$};
		\node (9) at (4,1) {$I$};
		\draw [->] (5) to (3);
		\draw [->] (4) to (2);
		\draw [->] (6) to (4);
		\draw [->] (3) to (2);
		\draw [->] (7) to (5);
		\draw [->] (8) to (6);
		\draw [->] (8) to (7);
		\draw [->] (4) to (3);
		\draw [->] (9) to (8);
		\draw [->] (9) to (7);
		
		\node at (1.55,1.15) {\tiny $f_{n_{i+1}}^{n_{i+2}}$};
		\node at (2.55,1.15) {\tiny $f_{n_k}^{n_{k+1}}$};
		\node at (3.55,1.15) {\tiny $f_{n_{k+1}}^{n_{k+2}}$};
		\node at (1.05,-0.2) {\tiny $g_{m_i}^{m_{i+1}}$};
		\node at (2.05,-0.2) {\tiny $g_{m_{i+1}}^{m_{i+2}}$};
		\node at (3.05,-0.2) {\tiny $g_{m_{k}}^{m_{k+1}}$};
		\end{tikzpicture}
		\caption{Subdiagrams which are $\eps_i$-commutative for every 
$i\in\N$.}
		\label{fig:Miod2}
	\end{figure} 
\end{theorem}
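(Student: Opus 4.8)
The statement is an equivalence, so the plan is to prove the two implications separately, writing $X=\underleftarrow{\lim}\{I,f_i\}$ and $Y=\underleftarrow{\lim}\{I,g_i\}$ with coordinate projections $\pi_i^X,\pi_i^Y$. Since replacing the bonding maps by their compositions along a subsequence does not change the inverse limit up to homeomorphism, once the strictly increasing sequences $(n_i),(m_i)$ are fixed we may regard the top row of the diagram as the system $\{I,F_i\}$ with $F_i=f_{n_i}^{n_{i+1}}$ and the bottom row as $\{I,G_i\}$ with $G_i=g_{m_i}^{m_{i+1}}$. The two families of diagonal arrows in Figure~\ref{fig:Miod} then become maps $\beta_i\colon I\to I$ (pointing from the bottom copy up to the top copy) and $\alpha_i\colon I\to I$ (pointing from the top copy down to the bottom copy), and the subdiagrams of Figure~\ref{fig:Miod2} being $\eps_i$-commutative means precisely that $\beta_i\circ\alpha_i$ and $F_i$, as well as $\alpha_i\circ\beta_{i+1}$ and $G_i$, agree up to $\eps_i$ in the supremum metric.

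For the direction ``homeomorphic $\Rightarrow$ diagram'', the key tool is an approximation lemma: for any continuous $\Phi\colon X\to I$ and any $\delta>0$ there are an index $n$ and a continuous $\phi\colon I\to I$ with $\sup_{x}|\Phi(x)-\phi(\pi_n^X(x))|<\delta$. This holds because the fibers $(\pi_n^X)^{-1}(t)$ have diameter at most $2^{-n}$ (all coordinates $\le n$ are determined by the $n$-th one, while later coordinates contribute at most $2^{-n}$ in the product metric), so for $n$ large $\Phi$ is nearly constant on each fiber by uniform continuity, and one lets $\phi$ be a continuous map recording this fiberwise value. Given a homeomorphism $h\colon X\to Y$ and a prescribed sequence $\eps_i\to 0$, I would build the two index sequences and the diagonal maps by induction, alternately advancing the top and bottom indices: applying the lemma to $\pi_{m_i}^Y\circ h$ produces $\alpha_i$ factoring $\pi_{m_i}^Y\circ h$ through some $\pi_{n_{i+1}}^X$, and applying it to $\pi_{n_i}^X\circ h^{-1}$ produces $\beta_i$ factoring $\pi_{n_i}^X\circ h^{-1}$ through some $\pi_{m_i}^Y$. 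The required commutativity is then forced by the genuine identities $h^{-1}\circ h=\mathrm{id}_X$ and $h\circ h^{-1}=\mathrm{id}_Y$: since $F_i\circ\pi_{n_{i+1}}^X=\pi_{n_i}^X=\pi_{n_i}^X\circ h^{-1}\circ h$, and $\alpha_i,\beta_i$ are close to $\pi_{m_i}^Y h$ and $\pi_{n_i}^X h^{-1}$, we get that $\beta_i\circ\alpha_i\circ\pi_{n_{i+1}}^X$ is close to $F_i\circ\pi_{n_{i+1}}^X$, whence $\beta_i\circ\alpha_i$ is $\eps_i$-close to $F_i$ because $\pi_{n_{i+1}}^X$ is onto; the other triangle is symmetric. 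The only real work here is the bookkeeping that keeps the indices strictly increasing while driving all errors below $\eps_i$.

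For the converse ``diagram $\Rightarrow$ homeomorphic'' I would use the diagram obtained for a sequence $\eps_i$ that I am free to prescribe (the hypothesis grants a diagram for \emph{every} such sequence, in particular for a very rapidly decreasing one). The diagonal maps give an approximate conjugacy, and the natural guess $h(x)_j=\alpha_j(x_{j+1})$ nearly satisfies the bonding relations of $Y$: combining $\alpha_j\circ\beta_{j+1}\approx G_j$ and $\beta_{j+1}\circ\alpha_{j+1}\approx F_{j+1}$ gives $G_j(\alpha_{j+1}(x_{j+2}))\approx\alpha_j(x_{j+1})$. To turn this approximate thread into an honest point of $Y$ I would fix a large stage $N$, set $w=\alpha_N(x_{N+1})$, and define $h^{(N)}\colon X\to Y$ by $h^{(N)}(x)=s_N(w)$, where $s_N\colon I\to Y$ is a fixed continuous section of $\pi_N^Y$ (obtained from continuous right inverses of the bonding maps); each $h^{(N)}$ is a genuine continuous map, and the plan is to show that $(h^{(N)})$ is uniformly Cauchy and that its limit $h$, together with the symmetric limit $h'$ built from the maps $\beta_i$, satisfies $h'\circ h=\mathrm{id}_X$ and $h\circ h'=\mathrm{id}_Y$ by the triangle relations, so that $h$ is the desired homeomorphism.

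The hard part is exactly this convergence claim. Comparing $h^{(N)}$ and $h^{(N+1)}$ at a coordinate $j\le N$ amounts to applying the long composition $G_j\circ\cdots\circ G_{N-1}$ to two points that differ by only about $\eps_N$; but these bonding compositions are in general strongly expanding, so their moduli of continuity need not shrink as the composition lengthens, and the naive estimate does not produce a summable bound. Overcoming this is the crux of Mioduszewski's theorem: one must choose the prescribed errors $\eps_i$ to decrease fast relative to the already-fixed maps $g_i$, and, more importantly, correct the approximate threads by a two-sided back-and-forth that alternates adjustments between the top and bottom systems rather than pushing all the error in one direction, so that the expansion of the bonding maps is not allowed to accumulate. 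Controlling this interplay between the freedom in $(\eps_i)$ and the expansion of the fixed bonding maps is where essentially all the difficulty lies; the two directions above are otherwise routine consequences of the approximation lemma together with the identities $h^{-1}h=\mathrm{id}$ and $hh^{-1}=\mathrm{id}$.
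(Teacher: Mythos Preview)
The paper does not prove this theorem; it is quoted from Mioduszewski \cite{Miod} and used as a black box in the proof of Theorem~\ref{thm:Mayer}. So there is no ``paper's own proof'' to compare against, and your write-up is really an attempt at Mioduszewski's original result.

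Your sketch of the forward direction (homeomorphism $\Rightarrow$ diagram) is essentially the standard one and is fine in outline.

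For the converse, however, you have misread the hypothesis, and this is why you end up fighting a difficulty that is not actually there. You interpret ``$\eps_i$-commutative'' as a condition only on the \emph{adjacent} triangles, i.e.\ $\|\beta_i\circ\alpha_i-F_i\|<\eps_i$ and $\|\alpha_i\circ\beta_{i+1}-G_i\|<\eps_i$. But the statement (see Figure~\ref{fig:Miod2}) requires that for every $i$ and every $k\ge i$, the entire subdiagram from stage $k$ back to stage $i$ is $\eps_i$-commutative: any two directed paths with the same source and target give maps that agree to within $\eps_i$. This is a far stronger condition, and it is precisely what absorbs the expansion of the bonding maps that worried you. With it, the sequences you write down are Cauchy in each coordinate by hypothesis (the error at coordinate $i$ is bounded by $\eps_i$ regardless of how far out you go), so the limit map $h\colon X\to Y$ exists and is continuous without any need to ``choose $\eps_i$ fast relative to the $g_i$'' or to invoke a back-and-forth correction. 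The same applies to $h'$, and $h'\circ h=\mathrm{id}_X$, $h\circ h'=\mathrm{id}_Y$ follow immediately from the same long-diagram estimates. In short, the ``crux'' you identify is an artifact of underreading the commutativity condition; once read correctly, the converse is routine.
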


\begin{theorem}\label{thm:Mayer}
Every nondegenerate indecomposable chainable continuum $X$ can be embedded in the 
plane in uncountably many ways that are not strongly equivalent.
\end{theorem}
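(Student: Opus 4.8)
The plan is to reduce, via Mioduszewski's theorem, to the setting of Lemma~\ref{lem:Mayerprep}. Since $X$ is chainable we may write $X=\underleftarrow{\lim}\{I,f_i\}$ for some continuous surjections $f_i\colon I\to I$; approximating uniformly by piecewise linear surjections and invoking Theorem~\ref{thm:Miod}, we may assume each $f_i$ is a piecewise linear surjection. Because $X$ is nondegenerate and indecomposable, Kuykendall's Theorem~\ref{thm:kuykendall} allows us to pick recursively indices $n_1<n_2<\cdots$ such that $F_j:=f_{n_j}^{n_{j+1}}$ is $P_{\eps_j}$ for any sequence $\eps_j\downarrow 0$ we wish to prescribe; moreover $\underleftarrow{\lim}\{I,F_j\}$ is homeomorphic to $X$, being a cofinal subsystem. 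We will fix the $\eps_j$ to decrease very fast, in a sense made precise below.

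Next I would perturb each $F_j$ to create genuine surjective intervals. By Remark~\ref{rem:threepts}, since $F_j$ is $P_{\eps_j}$ there are $x_1^j<x_2^j<x_3^j$ with (after possibly exchanging the roles of $0$ and $1$) $F_j(x_1^j),F_j(x_3^j)<\eps_j$ and $F_j(x_2^j)>1-\eps_j$. Modify $F_j$ only on three pairwise disjoint tiny intervals around $x_1^j,x_2^j,x_3^j$, by linear interpolation, to obtain a piecewise linear surjection $g_j$ with $\|g_j-F_j\|_\infty<2\eps_j$, $g_j(x_1^j)=g_j(x_3^j)=0$ and $g_j(x_2^j)=1$. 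Then $g_j([x_1^j,x_2^j])=I=g_j([x_2^j,x_3^j])$, so each of these subintervals contains a surjective interval of $g_j$, and these two surjective intervals are distinct since their intersection lies in $\{x_2^j\}$, which is not a surjective interval. Hence $g_j$ has at least two surjective intervals.

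The delicate step is to verify $\underleftarrow{\lim}\{I,g_j\}\cong\underleftarrow{\lim}\{I,F_j\}\cong X$ via Mioduszewski's Theorem~\ref{thm:Miod}: uniform closeness $\|g_j-F_j\|_\infty\to 0$ alone does not suffice, since deep compositions can amplify small perturbations. One must choose each $\eps_j$ small relative to the moduli of continuity of the finite composites $F_n\circ\cdots\circ F_{k-1}$ already constructed, so that for any prescribed $\eta_i\to 0$ the diagram of Figure~\ref{fig:Miod}, with the two rows formed by the $F_j$ and the $g_j$ and with identity connecting maps, becomes $(\eta_i)$-commutative after passing to a suitable subsequence; the telescoping bound
$\|F_n\circ\cdots\circ F_m-g_n\circ\cdots\circ g_m\|_\infty\le\sum_{k=n}^{m}\omega_{F_n\circ\cdots\circ F_{k-1}}(2\eps_k)$
then provides the needed estimate. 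Making this bound small uniformly over the relevant windows is the principal technical obstacle of the argument.

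Finally, set $h_j:=g_{2j-1}\circ g_{2j}$. Each $h_j$ is a piecewise linear surjection with at least three surjective intervals by Remark~\ref{rem:May}, and $\underleftarrow{\lim}\{I,h_j\}$ is a cofinal subsystem of $\underleftarrow{\lim}\{I,g_j\}$, hence homeomorphic to $X$ and therefore indecomposable. Lemma~\ref{lem:Mayerprep} now produces uncountably many planar embeddings of $\underleftarrow{\lim}\{I,h_j\}$ that are pairwise not strongly equivalent; composing each with a fixed homeomorphism $X\to\underleftarrow{\lim}\{I,h_j\}$ yields uncountably many embeddings of $X$ that are pairwise not strongly equivalent, since strong equivalence of two embeddings is unchanged under pre-composition with a fixed homeomorphism. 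This completes the proof.
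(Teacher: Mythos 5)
Your proposal is correct and follows essentially the same route as the paper: reduce to piecewise linear bonding maps, use Kuykendall's theorem to extract composites $f_{n_i}^{n_{i+1}}$ that are $P_{\eps_i}$, perturb them (via Remark~\ref{rem:threepts} and Mioduszewski's theorem) to maps with at least two surjective intervals, and conclude via Remark~\ref{rem:May} and Lemma~\ref{lem:Mayerprep}. Your extra attention to choosing $\eps_i$ small relative to the moduli of continuity of the earlier composites, so that the almost-commuting diagrams required by Theorem~\ref{thm:Miod} can actually be produced, is a point the paper passes over silently, and your recursive choice of $\eps_i$ after $n_i$ is exactly how one makes that step rigorous.
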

\begin{proof}
	Let $X=\underleftarrow{\lim}\{I,f_i\}$, where each $f_i: I\to I$ is a 
continuous piecewise linear surjection. 
	If all but finitely many $f_i$ have at least three surjective 
intervals, we are done by Lemma~\ref{lem:Mayerprep}. 
	If for all but finitely many $i$ the map $f_i$ has two surjective 
intervals, we are done by Remark~\ref{rem:May}.
	
	Now fix a sequence $(\eps_i)$ such that $\eps_i>0$ for every $i\in\N$ 
and $\eps_i\to 0$ as $i\to\infty$. 
	Fix $n_1=1$ and find $n_2>n_1$ such that $f_{n_1}^{n_2}$ is 
$P_{\eps_1}$. Such $n_2$ exists by 
	Theorem~\ref{thm:kuykendall}. For every $i\in\N$ find $n_{i+1}>n_i$ 
such that $f_{n_i}^{n_{i+1}}$ is $P_{\eps_i}$. 
	The continuum $X$ is homeomorphic to $\underleftarrow{\lim}\{I, 
f_{n_i}^{n_{i+1}}\}$. Every $f_{n_i}^{n_{i+1}}$ is 
	piecewise linear and there exist $x_1^i<x_2^i<x_3^i$ as in 
Remark~\ref{rem:threepts}. Take them to be critical 
	points and assume without loss of generality that they satisfy condition 
$(a)$ of Remark~\ref{rem:threepts}. 
	Define a piecewise linear surjection $g_i: I\to I$ with the same set of 
critical points as $f_{n_i}^{n_{i+1}}$ 
	such that $g_i(c)=f_{n_i}^{n_{i+1}}(c)$ for all critical points 
$c\not\in\{x_1, x_2, x_3\}$ and $g_i(x_1)=g_i(x_3)=0$, $g_i(x_2)=1$. Then $g_i$ 
is $\eps_i$-close to $f_{n_i}^{n_{i+1}}$. By Theorem~\ref{thm:Miod}, 
$\underleftarrow{\lim}\{I, f_{n_i}^{n_{i+1}}\}$ is homeomorphic to 
$\underleftarrow{\lim}\{I, g_i\}$. Since every $g_i$ has at least two 
surjective intervals, this finishes the proof by Remark~\ref{rem:May}.
\end{proof}

\begin{remark}
	Specifically, Theorem~\ref{thm:Mayer} proves that the pseudo-arc has 
uncountably many embeddings that are not strongly equivalent. Lewis 
\cite{Lew} has already proven this with 
	respect to the standard version of equivalence, by carefully constructing 
embeddings with different prime end structures. 
\end{remark}

In the next theorem we expand the techniques from this section to construct 
uncountably many strongly nonequivalent 
embeddings of every chainable continuum that contains a nondegenerate indecomposable subcontinuum. 
First we give a generalisation of Lemma~\ref{lem:twopoints}.

\begin{lemma}\label{lem:indecsubc}
	Let $f: I\to I$ be a surjective map and let $K\subset I$ be a closed 
interval. Let $A_1, \ldots, A_n$ be the surjective intervals of $f|_K: K\to 
f(K)$, and let $J^i$, $i \in \{1,\dots, n\}$, be intervals from 
Lemma~\ref{lem:preimages} applied to the map $f|_K$. \\
	Assume $n\geq 4$. Then there exist $\alpha, \beta\in\{1, \ldots, n\}$ 	
	such that $|\alpha-\beta|\geq 2$ and such that there exist admissible 
permutations $p_{\alpha}, p_{\beta}$ of $G_f$ such that both endpoints of 
$J^{\alpha}$ are topmost in $p_{\alpha}(G_{f|_K})$ and such that both endpoints 
of $J^{\beta}$ are topmost in $p_{\beta}(G_{f|_K})$. 
\end{lemma}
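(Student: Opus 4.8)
The plan is to reduce Lemma~\ref{lem:indecsubc} to Lemma~\ref{lem:twopoints} applied twice, once for each of two well-separated surjective intervals. The key point is that Lemma~\ref{lem:twopoints} already provides, for \emph{each} individual surjective interval $A_i$ of $f|_K$, an admissible permutation $p_i$ of $G_{f|_K}$ making both endpoints of $J^i$ topmost. Since we have $n\ge 4$ surjective intervals available, we can simply choose two of them that are at least two apart; the natural candidates are $\alpha=1$ and $\beta=n$, or more generally $\alpha=1$ and $\beta=3$, which satisfy $|\alpha-\beta|\geq 2$ whenever $n\geq 3$ (and $n\geq 4$ gives us extra room if some degenerate configuration needs to be avoided). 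So the first step is: set $\alpha=1$, $\beta=n$, note $|\alpha-\beta|=n-1\geq 3\geq 2$, and invoke Lemma~\ref{lem:twopoints} for $i=\alpha$ to obtain an admissible permutation $p_\alpha$ of $G_{f|_K}$ with both endpoints of $J^\alpha$ topmost, and again for $i=\beta$ to obtain $p_\beta$ with both endpoints of $J^\beta$ topmost.

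The second step is a bookkeeping point about the ambient map $f$ versus the restriction $f|_K$. Lemma~\ref{lem:twopoints} is stated for the surjective map $f|_K:K\to f(K)$, and its conclusion is a permutation of $G_{f|_K}$. But the statement of Lemma~\ref{lem:indecsubc} asks for admissible permutations of $G_f$ (the full graph). I would address this by observing that the flattened graph $G_{f|_K}$ is the portion of $G_f$ sitting over the domain $K$; the reflections used in the proof of Lemma~\ref{lem:twopoints} only rearrange horizontal branches lying over $K$, and the branches of $G_f$ over $I\setminus K$ can be left in place (or appended below), so any admissible permutation of $G_{f|_K}$ extends to an admissible permutation of $G_f$ without destroying admissibility, because the added branches are over a disjoint part of the domain and do not interfere with the vertical-ray condition defining ``topmost.'' Thus $p_\alpha$ and $p_\beta$, originally produced for $G_{f|_K}$, lift to admissible permutations of $G_f$ with the required topmost properties, which is exactly what the statement demands (note the statement says ``topmost in $p_\alpha(G_{f|_K})$'', so this lifting is actually only needed to match the phrase ``admissible permutations $p_\alpha,p_\beta$ of $G_f$'').

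The main obstacle, and the only place requiring genuine care, is checking that the permutations $p_\alpha$ and $p_\beta$ obtained from Lemma~\ref{lem:twopoints} are indeed \emph{admissible} permutations of the full graph $G_f$ in the sense of Definition~\ref{def:flat}, and not merely of $G_{f|_K}$. The delicate case is when the endpoints $t_k,t_{k+1}$ of $K$ are not critical points of $f$ — then $K$ cuts through the middle of a monotone branch of $f$, and ``$f|_K$'' has a surjective-interval structure that must be reconciled with the critical-point structure of $f$. I would handle this by first enlarging $K$ very slightly, or equivalently by subdividing, so that $\partial K$ consists of critical points of $f$ (adding $t_k,t_{k+1}$ to the critical set of $f$ changes nothing essential since we may insert trivial critical points), after which $G_{f|_K}$ is literally a sub-union of horizontal branches of $G_f$ and the admissibility conditions (1)–(2) of Definition~\ref{def:flat} for $p_\alpha,p_\beta$ on $G_f$ follow from those already verified for $G_{f|_K}$ together with the fact that the untouched branches over $I\setminus K$ lie in links of $C$ disjoint from the relevant values. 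Once admissibility on $G_f$ is confirmed, the lemma is proved: $\alpha=1$, $\beta=n$, $|\alpha-\beta|\geq 2$, and $p_\alpha,p_\beta$ are the desired permutations.
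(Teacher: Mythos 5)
There is a genuine gap, and it sits exactly where you flagged ``the only place requiring genuine care'': the passage from an admissible permutation of $G_{f|_K}$ to an admissible permutation of the full graph $G_f$. Your justification — that the branches over $I\setminus K$ ``lie in links of $C$ disjoint from the relevant values'' and hence can be left in place or appended below — is false. Write $K=[k_l,k_r]$ and $f(K)=[K_l,K_r]$. Immediately to the right of $k_r$ the graph of $f$ continues from the value $f(k_r)\in[K_l,K_r]$, and every branch up to the first local extremum $x>k_r$ with $f(x)\notin[K_l,K_r]$ takes values \emph{inside} $f(K)$; the branch containing $x$ itself sweeps through a whole subinterval of $f(K)$ on its way out. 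These branches are attached by vertical segments to the rightmost branch over $K$, so they cannot be ``appended below'' without violating condition (1)--(2) of Definition~\ref{def:flat}, and they genuinely constrain which folds over $K$ can be brought to the top. This is the entire content of the lemma, which is why the paper states it as a \emph{generalisation} of Lemma~\ref{lem:twopoints} rather than a corollary of it.

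The consequence you miss is that only surjective intervals of one orientation work: the paper's proof looks at the first exit point $x$ and shows that if $f(x)>K_r$ one can make both endpoints of $J^i$ topmost only for \emph{increasing} $A_i$ (reflecting $f|_{[m,x]}$ over $f|_{[a_i,m]}$ and leaving $f|_{[x,1]}$ fixed), and symmetrically for decreasing $A_i$ when $f(x)<K_l$. Since consecutive surjective intervals alternate orientation (between two same-oriented ones $f$ must traverse all of $f(K)$, producing another surjective interval in between), your choice $\alpha=1$, $\beta=n$ gives intervals of \emph{opposite} orientation whenever $n$ is even, so one of your two permutations does not exist. The correct choice is two surjective intervals of the same, correctly chosen orientation; the hypothesis $n\geq 4$ is there precisely to guarantee two of them, and the alternation then gives $|\alpha-\beta|\geq 2$ for free. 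Your remark that the argument would already work for $n\geq 3$ is a symptom of the gap: with $n=3$ one may have only one interval of the needed orientation, and the lemma as you prove it would be false.
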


\begin{proof}
Let $K=[k_l, k_r]$ and $f(K)=[K_l, K_r]$. Let $x>k_r$ be the 
smallest local extremum of $f$ such that $f(x)>K_r$ or $f(x)<K_l$. A surjective 
interval $A_i=[l_i, r_i]$ will be called increasing (decreasing) if 
$f(l_i)=K_l$ ($f(r_i)=K_l$).
	
	{\bf Case 1.} Assume $f(x)>K_r$ (see Figure~\ref{fig:extperm}). If 
$A_i=[l_i, r_i]$ is increasing, since $f(x)>K_r$, there exists an admissible 
permutation which reflects $f|_{[m, x]}$ over $f|_{[a_i, m]}$ and leaves 
$f|_{[x, 1]}$ fixed. Here $m$ is chosen as in the proof of 
Lemma~\ref{lem:twopoints}. Since there are at least four surjective intervals, 
	at least two are increasing. This finishes the proof.
	
	{\bf Case 2.} If $f(x)<K_l$ we proceed as in the first case but for 
decreasing $A_i$.
\end{proof}

\begin{figure}[!ht]
	\centering
	\begin{tikzpicture}[scale=4]
	\draw (0,0)--(0,1)--(1,1)--(1,0)--(0,0);
	\draw[dashed] (0,0)--(0,-0.2);
	\draw[dashed] (1,0)--(1,-0.2);
	\draw[dashed] (0,0)--(-0.2,0);
	\draw[dashed] (0,1)--(-0.2,1);
	\node at (0,-0.3) {\small $k_l$};
	\node at (1,-0.3) {\small $k_r$};
	\node at (-0.3,0) {\small $K_l$};
	\node at (-0.3,1) {\small $K_r$};
	\node at (0.4,0.5) {$\ldots$};
	\draw 
(0.5,0.5)--(0.6,0)--(0.7,0.6)--(0.8,0.4)--(0.9,1)--(1,0.7)--(1.1,1.1)--(1.2,
-0.1);
	\draw[thick] (0.633,0.2)--(0.69,0.55);
	\node at (0.7,0.65) {\scriptsize $f(m)$};
	\node at (1.1,1.15) {\scriptsize $f(x)$};
	\draw[->] (1.3,0.5)--(1.45,0.5);
	
	\draw 
(1.6,0.5)--(1.6,0)--(2.1,0)--(2.1,0.6)--(2,0.6)--(2,0.4)--(1.9,0.4)--(1.9,
1)--(1.8,1)--(1.8,0.7)--(1.7,0.7)--(1.7,1.1)--(2.3,1.1)--(2.3,-0.1)--(2.4,-0.1);
	\node at (2.05,0.65) {\scriptsize $f(m)$};
	\node at (2,1.15) {\scriptsize $f(x)$};
	\draw[thick] (2.1,0.2)--(2.1,0.55);
	\end{tikzpicture}
	\caption{Permuting in the proof of Lemma~\ref{lem:indecsubc}.}
	\label{fig:extperm}
\end{figure}
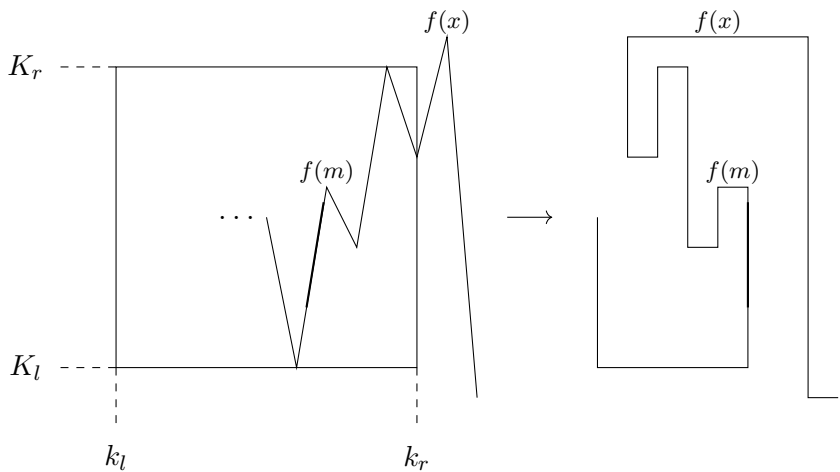

\begin{theorem}\label{thm:Mayersubc}
	Let $X$ be a chainable continuum that contains a nondegenerate indecomposable 
subcontinuum $Y$. Then $X$ can be embedded in the plane in uncountably many 
ways that are not strongly equivalent.
\end{theorem}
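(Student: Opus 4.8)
The strategy is to run the argument of Lemma~\ref{lem:Mayerprep} \emph{inside} the indecomposable subcontinuum $Y$, using Lemma~\ref{lem:indecsubc} in place of Lemma~\ref{lem:twopoints}. Write $X=\underleftarrow{\lim}\{I,f_i\}$ with $f_i$ piecewise linear surjections. Since $Y$ is a nondegenerate subcontinuum, each $\pi_i(Y)$ is a closed subinterval of $I$, and it is nondegenerate for all $i\ge N$ and some $N$ (if $\pi_{i-1}(Y)$ is nondegenerate then so is $\pi_i(Y)$, because $f_i(\pi_i(Y))=\pi_{i-1}(Y)$). Replacing $\{I,f_i\}$ by $\{I,f_{i+N}\}$ we may assume that $K_i:=\pi_i(Y)$ is a nondegenerate closed interval with $f_i(K_i)=K_{i-1}$ for every $i$, so that $Y=\underleftarrow{\lim}\{K_i,f_i|_{K_i}\}$.

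Next I would normalise this representation. Since $Y$ is indecomposable, after rescaling each $K_i$ affinely onto $I$, Theorem~\ref{thm:kuykendall} provides, for any preassigned $\eps_i\downarrow 0$, indices $1=n_1<n_2<\cdots$ so that the block $F_i:=f_{n_i}^{n_{i+1}}$ satisfies: $F_i|_{K_{n_{i+1}}}\colon K_{n_{i+1}}\to K_{n_i}$ is $P_{\eps_i}$ in rescaled coordinates. By Remark~\ref{rem:threepts} there are three points of $K_{n_{i+1}}$ at which $F_i$ takes values $\eps_i$-close to the pattern (one endpoint of $K_{n_i}$, the other, the first). Following Mioduszewski, but relative to $Y$, I replace $F_i$ by a piecewise linear surjection $g_i\colon I\to I$ which equals $F_i$ outside a small neighbourhood (inside $K_{n_{i+1}}$) of these three points and takes there exactly the values (left endpoint, right endpoint, left endpoint) of $K_{n_i}$. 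Then $g_i$ is $\eps_i$-close to $F_i$, one has $g_i(I)=K_{n_i}\cup F_i(I\setminus K_{n_{i+1}})=I$ and $g_i(K_{n_{i+1}})=K_{n_i}$, and $g_i|_{K_{n_{i+1}}}\colon K_{n_{i+1}}\to K_{n_i}$ is $P_0$ with at least two surjective intervals. By Theorem~\ref{thm:Miod}, $X':=\underleftarrow{\lim}\{I,g_i\}$ is homeomorphic to $X$; set $Y':=\underleftarrow{\lim}\{K_{n_i},g_i|_{K_{n_{i+1}}}\}\subseteq X'$. Every composition of the $P_0$ maps $g_i|_{K_{n_{i+1}}}$ is again $P_0$, so $Y'$ is indecomposable by Theorem~\ref{thm:kuykendall}, and it is nondegenerate. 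Finally I group three consecutive blocks into single bonding maps $H_j$; then $X'\cong\underleftarrow{\lim}\{I,H_j\}$, and the restriction of $H_j$ to the relevant subinterval $M_{j+1}$ (the corresponding projection of $Y'$) is a composition of three maps having at least two surjective intervals each, hence has at least four surjective intervals — each composition with a further map having at least two surjective intervals strictly increases that number, a routine extension of the count in Remark~\ref{rem:May}. Also $Y'\cong\underleftarrow{\lim}\{M_j,H_j|_{M_{j+1}}\}$.

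Now the construction proper. Fix a nondegenerate closed subinterval $J\subseteq M_1$. For each $j$, Lemma~\ref{lem:indecsubc} applied to $H_j$ with $K=M_{j+1}$ yields indices $L_j,R_j$ of surjective intervals of $H_j|_{M_{j+1}}$ with $|L_j-R_j|\ge 2$, together with admissible permutations of $G_{H_j}$ making both endpoints of the preimage intervals $J^{L_j}$ resp.\ $J^{R_j}$ (from Lemma~\ref{lem:preimages}) topmost. For $\sigma=(\sigma_j)_j\in\prod_j\{L_j,R_j\}$ put $C_\sigma:=(J,J^{\sigma_1},J^{\sigma_1\sigma_2},\dots)\subseteq Y'$, a subcontinuum; by Lemma~\ref{lem:indecsubc} and Theorem~\ref{thm:algorithm} there is a planar embedding $\phi_\sigma$ of $X'$ for which the two (distinct) endpoints of the boundary sequence $\partial J\leftarrow\partial J^{\sigma_1}\leftarrow\cdots$ are accessible, both lying on the topmost horizontal arc of the embedding. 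If $\sigma$ and $\tau$ differ at infinitely many indices, Remark~\ref{rem:image} (using $|L_j-R_j|\ge 2$, so that $H_j|_{M_{j+1}}$ maps the convex hull of the two preimage intervals onto all of $M_j$) forces every subcontinuum of $Y'$ containing $C_\sigma$ and $C_\tau$ to project onto $M_j$ for cofinally many, hence all, $j$; so $C_\sigma$ and $C_\tau$ lie in distinct composants of $Y'$. For any planar embedding $\phi$ of $X'$, the set $\phi(Y')$ is an indecomposable planar continuum, so by Theorem~\ref{thm:Maz} only countably many of its composants are accessible in at least two points from $\R^2\setminus\phi(Y')$, a fortiori from $\R^2\setminus\phi(X')$. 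A strong equivalence between $\phi_\sigma$ and $\phi_\tau$ is a homeomorphism of $\R^2$ carrying $\phi_\sigma(Y')$, its composants, and its accessible points onto those of $\phi_\tau(Y')$; hence each $\phi_\tau$ is strongly equivalent to $\phi_\sigma$ for only countably many $\sigma$ ranging over a set of pairwise distinct $Y'$-composant labels. As $\prod_j\{L_j,R_j\}$ contains uncountably many sequences that pairwise differ at infinitely many places, the family $\{\phi_\sigma\}$ contains uncountably many pairwise non-strongly-equivalent embeddings, proving the theorem.

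The step I expect to be the main obstacle is the interface in paragraph two and three: making the relativised Mioduszewski modification genuinely work — keeping the $g_i$ piecewise linear surjections of $I$ while pinning $\pi_{n_i}(Y')$ exactly to the prescribed intervals $K_{n_i}$ — and then verifying that "topmost in $p(G_{H_j|_{M_{j+1}}})$" supplied by Lemma~\ref{lem:indecsubc} is precisely what the inductive tubular-neighbourhood construction of Theorem~\ref{thm:algorithm} needs in order to place \emph{both} endpoints of $C_\sigma$'s boundary sequence on the top arc of an embedding of all of $X'$ (not merely of $Y'$). Checking that this works with the surjective intervals living in a proper, moving subinterval $M_j\subsetneq I$ is the delicate point.
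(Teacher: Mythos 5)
Your proposal follows essentially the same route as the paper's proof: restrict attention to the indecomposable subcontinuum $Y$, invoke Theorem~\ref{thm:kuykendall} and Theorem~\ref{thm:Miod} to arrange that the restricted bonding maps have at least four surjective intervals, apply Lemma~\ref{lem:indecsubc} (in place of Lemma~\ref{lem:twopoints}) together with Theorem~\ref{thm:algorithm} to make both endpoints of the boundary sequences $\partial J\leftarrow\partial J^{n_1}\leftarrow\cdots$ accessible, place the resulting subcontinua in distinct composants via Remark~\ref{rem:image}, and conclude with Theorem~\ref{thm:Maz} and the observation that strong equivalence of embeddings of $X$ restricts to strong equivalence on $Y$. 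The paper compresses the four-surjective-intervals reduction into a single sentence where you spell it out (correctly), but the argument is the same.
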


\begin{proof}
Let 
$$
Y:=Y_0\stackrel{\text{$f_1$}}{\leftarrow} 
Y_1\stackrel{\text{$f_2$}}{\leftarrow} Y_2\stackrel{\text{$f_3$}}{\leftarrow} 
Y_3\stackrel{\text{$f_4$}}{\leftarrow}\ldots.
$$ 
If $\phi, \psi: X\to \R^2$ are strongly equivalent planar embeddings of $X$, 
then $\phi|_Y, \psi|_Y$ are strongly equivalent planar embeddings of $Y$. We will 
construct uncountably many strongly nonequivalent planar embeddings of $Y$ extending 
to planar embeddings of $X$, which will complete the proof. 
	
	According to Theorem~\ref{thm:kuykendall} and Theorem~\ref{thm:Miod} we 
can assume that $f_i|_{Y_i}: Y_i\to Y_{i-1}$ has at least four surjective 
intervals for every $i\in\N$. For a closed interval $J\subset Y_{j-1}$, let
 $\alpha_j, \beta_j$ be integers from Lemma~\ref{lem:indecsubc} applied to $f_j: Y_j\to 
Y_{j-1}$, and denote the appropriate subintervals of $Y_j$ by $J^{\alpha_j}$, 
$J^{\beta_j}$. For every sequence $(n_i)_{i\in\N}\in\prod_{i\in\N}\{\alpha_i, 
\beta_i\}$ we obtain a subcontinuum of $Y$:
	$$J^{(n_i)}:=J\stackrel{\text{$f_1$}}{\leftarrow} 
J^{n_1}\stackrel{\text{$f_2$}}{\leftarrow} 
J^{n_1n_2}\stackrel{\text{$f_3$}}{\leftarrow} 
J^{n_1n_2n_3}\stackrel{\text{$f_4$}}{\leftarrow}\ldots$$ 
	We use the notation of the proof of Lemma~\ref{lem:Mayerprep}. 
Lemma~\ref{lem:indecsubc} implies that for every sequence $(n_i)$ there exists an 
embedding of $Y$ such that both points of $\partial J\leftarrow \partial 
J^{n_1}\leftarrow \partial J^{n_1n_2}\leftarrow \partial 
J^{n_1n_2n_3}\leftarrow\ldots$ are accessible and which can be extended to an 
embedding of $X$. This completes the proof.
\end{proof}

We have proven that every chainable continuum containing a nondegenerate indecomposable 
subcontinuum has uncountably many embeddings that are not strongly equivalent. Thus we pose the 
following question.

\begin{question}\label{q:above}
 Which hereditarily decomposable chainable continua have uncountably many 
planar embeddings that are not equivalent and/or strongly 
equivalent? 
\end{question}

\begin{remark}
	Mayer has constructed uncountably many nonequivalent 
planar embeddings (in both senses) in \cite{May}  of the $\sin\frac 1x$ continuum by varying 
the rate of convergence of the ray. 
	This approach readily generalizes to any Elsa continuum.
	We do not know whether the approach can be generalized to all chainable 
continua which contain a dense ray. Specifically, it would be interesting to 
	see if $\underleftarrow{\lim}\{I, f_{Feig}\}$ (where $f_{Feig}$ denotes 
the logistic interval map at the Feigenbaum parameter) 
can be embedded in uncountably many nonequivalent ways. 
	However, this approach would not generalize to the remaining 
hereditarily decomposable chainable continua since there 
	exist hereditarily decomposable chainable continua which do not contain a dense 
ray, see \eg \cite{Jan}. 
\end{remark}

\begin{remark}\label{rem:n_emb}
In Figure~\ref{fig:n_embed} we give examples of planar continua which 
have exactly $n\in\N$ or countably many nonequivalent planar embeddings. 
However, except for the arc, all the examples we know are not chainable. 
\end{remark}

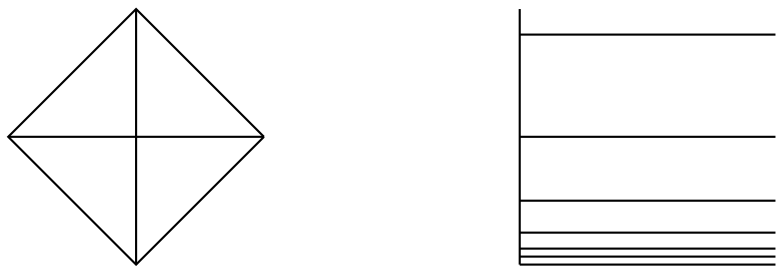
\begin{figure}[!ht]
	\centering
	\begin{tikzpicture}[scale=1.7]
	\draw[thick] (1,1)--(2,1);
	\draw[thick] (1,1)--(0,1);
	\draw[thick] (1,1)--(1,2);
	\draw[thick] (1,1)--(1,0);
	\draw[thick] (2,1)--(1,0)--(0,1)--(1,2)--(2,1);
	\thicklines
	\draw[thick] (4, 0)--(4,2);
	\draw[thick] (4, 0)--(6,0);
	\draw[thick] (4, 1.8)--(6,1.8);
	\draw[thick] (4, 1)--(6,1);
	\draw[thick] (4, 0.5)--(6,0.5);
	\draw[thick] (4, 0.25)--(6,0.25);
	\draw[thick] (4, 0.125)--(6,0.125);
	\draw[thick] (4, 0.0625)--(6,0.0625);
	\end{tikzpicture}
	\caption{Left: Planar projection (Schlegel diagram) of the sides of the 
pyramid with $n\geq 4$ faces has exactly $n$ embeddings that are not strongly equivalent, determined by the choice of the unbounded face. Actually any 
planar representation of a polyhedron with $n$ faces would do in the previous 
example. We are indebted to Imre P\'eter T\'oth  for these examples. Continua 
with exactly $n=2,3$ nonequivalent planar embeddings in the strong sense are 
\eg the letters $H, X$ respectively. In the standard sense, there is only one planar 
embedding of each of these examples.
\newline
Right: the harmonic comb has countably many nonequivalent 
embeddings in both senses; any finite number of 
non-limit 
		teeth can be flipped over to the left to produce a 
nonequivalent embedding.} 
	\label{fig:n_embed}
\end{figure}

\begin{question}
	Is there a non-arc chainable continuum for which there exist at most countably 
	many nonequivalent planar embeddings? 	
\end{question}

\begin{remark}\label{rem:otherdef}	
For inverse limit spaces $X$ with a single {\bf unimodal} bonding map 
that are not hereditarily decomposable, 
Theorems~\ref{thm:Mayer} and \ref{thm:Mayersubc}
hold with the standard notion of equivalence as well, for details see \cite{embed}.
This is because every self-homeomorphism of $X$ is known to be 
pseudo-isotopic (two self-homeomorphisms $f, g$ of $X$ are called {\em pseudo-isotopic} if 
$f(C)=g(C)$ for every composant $C$ of $X$) to a power of the shift 
homeomorphism 
(see \cite{BBS}), and so every composant can only be mapped to one in a countable 
collection of composants. 
Hence, if uncountably many composants can be made accessible in at least two points, 
then there are uncountably many nonequivalent embeddings. In general there are no such rigidity results on the 
group of self-homeomorphisms of chainable continua. For example, there are 
uncountably many self-homeomorphisms of the pseudo-arc up to pseudo-isotopy, 
since it is homogeneous and all arc-components are degenerate. Thus we ask the following question.
\end{remark}

\begin{question}
For which indecomposable chainable continua is the group of all 
self-homeomorphisms up to pseudo-isotopy at most countable?
 \end{question}


\begin{thebibliography}{ABCD} 
	\bibitem{ABC-q} A.\ Anu\v si\'c, H.\ Bruin, J.\ \v Cin\v c, {\em 
Problems on planar embeddings of chainable continua and accessibility}, In: 
Problems in Continuum Theory in Memory of Sam B. Nadler, Jr. Ed. Logan Hoehn, 
Piotr Minc, Murat Tuncali, Topology Proc.\ {\bf 52} (2018), 283--285.
	\bibitem{embed} A.\ Anu\v si\'c, H.\ Bruin, J.\ \v Cin\v c, {\em 
Uncountably many planar embeddings of unimodal inverse limit spaces}, DCDS - Series A 
{\bf 37} (2017), 2285--2300.
	\bibitem{AC} A.\ Anu\v si\'c, J.\ \v Cin\v c, {\em Accessible points of 
planar embeddings of tent inverse limit spaces}, Diss.\ Math.\, DOI: 10.4064/dm776-1-2019, 2019.
	\bibitem{BBS} M.\ Barge, H.\ Bruin, S.\ \v Stimac, 
	{\em The Ingram Conjecture,} Geom.\ Topol.\ {\bf 16} (2012), 2481--2516.
	\bibitem{Bell} D.\ P.\ Bellamy, {\em A tree-like continuum without the 
fixed point property}, Houston J.\ Math.\ {\bf 6} (1979), 1--13.
	\bibitem{Bing} R.\ H.\ Bing, {\em Snake-like continua}, Duke Math J. 
{\bf 18} (1951), 653--663.
	\bibitem{Bre} B.\ Brechner, {\em On stable homeomorphisms and 
imbeddings of the pseudo-arc}, Illinois J. Math. {\bf 22} Issue 4 (1978), 
630--661.
	\bibitem{BrBr} K.\ Brucks, H.\ Bruin, {\em Subcontinua of inverse limit 
spaces of unimodal maps}, Fund. Math. {\bf 160} (1999), 219--246.
\bibitem{Br2} H.\ Bruin, {\em Inverse limit spaces of post-critically finite tent maps},
Fund. Math., {\bf 165} (2000), 125--138.
	\bibitem{DT} W.\ D\k{e}bski, E.\ Tymchatyn,
	{\em A note on accessible composants in Knaster continua},
	Houston J.\ Math.\ {\bf 19} (1993), no. 3, 435--442. 
	\bibitem{Ha} O.\ H.\ Hamilton, {\em A fixed point theorem for 
pseudo-arcs and certain other metric continua}, Proc.\ Amer.\ Math.\ Soc.\ {\bf 
2} (1951), 173--174. 
	\bibitem{Jan} Z.\ Janiszewski, {\em \"Uber die Begriffe ``Linie'' und ``Fl\"ache'',} International Congress of Math., 
Cambridge, 1912. 
	\bibitem{Kuy} D. P.\ Kuykendall, {\em Irreducibility and 
indecomposability in inverse limits}, Fund.\ Math.\ {\bf 80} (1973), 265--270.
	\bibitem{Lew} W.\ Lewis, {\em Embeddings of the pseudo-arc in $E^2$}, 
Pacific J. Math., {\bf 93}, no. 1, (1981), 115--120.
	\bibitem{LewP}
	 W.\ Lewis, {\em Continuum theory problems}, Proceedings of the 1983 
topology conference (Houston, Tex., 1983). Topology Proc.\ {\bf 8} no. 2 
(1983),  361--394.
	  \bibitem{MayThesis} J.\ C.\ Mayer, {\em Embeddings of plane continua and the fixed point property,}
	  Ph. D. dissertation, University of Florida, 1982, 1-247.
	 \bibitem{May} J.\ C.\ Mayer, {\em Inequivalent embeddings and prime 
ends}, Topology Proc.\ {\bf 8} (1983), 99--159.
	\bibitem{Maz} S.\ Mazurkiewicz, {\em Un th\'eor\`eme sur 
l'accessibilit\'e des continus ind\'ecomposables}, Fund.\ Math.\ {\bf 14} 
(1929), 271--276.
	\bibitem{Medd} J.\ Meddaugh, {\em Embedding one-dimensional continua 
into $T\times I$}, Topology and its Appl. {\bf 153} (2006) 3519--3527.
	\bibitem{Minc} P.\ Minc {\em Embedding tree-like continua in the 
plane}, In: Problems in Continuum Theory in Memory of Sam B. Nadler, Jr. Ed. 
Logan Hoehn, Piotr Minc, Murat Tuncali, Topology Proc.\ {\bf 52} (2018), 
296--300.
	\bibitem{MincTrans} P.\ Minc, W.\ R.\ R.\ Transue, {\em Accessible 
points of hereditarily decomposable chainable continua}, Trans.\ Amer.\ Math.\ 
Soc. {\bf 2} (1992), 711--727. 
	\bibitem{Miod} J.\ Mioduszewski, {\em Mappings of inverse limits}, 
Colloquium Mathematicum {\bf 10} (1963), 39--44. 
	\bibitem{Moise} E.\ Moise, Geometric Topology in Dimensions 2 and 3, 
Springer-Verlag, New York (1977).
	\bibitem{Na-Elsa} S.\ B.\ Nadler, Jr., {\em Continua whose cone and 
hyperspace are homeomorphic}, Trans.\ Amer.\ Math.\ Soc.\ {\bf 230} (1977), 
321--345.
	\bibitem{Nadler} S.\ B.\ Nadler, Jr., \emph{Some results and problems 
about embedding certain compactifications}, Proceedings of the University of 
Oklahoma Topology Conference (1972) 222--233.
	\bibitem{Sm} M.\ Smith, {\em Plane indecomposable continua no composant 
of which is accessible at more than one point},	Fund.\ Math.\ {\bf 111} (1981), 
61--69.
	
	
\end{thebibliography}
\end{document}